\numberwithin{equation}{section}
\newtheorem{thm}[equation]{Theorem}
\newtheorem*{thm*}{Theorem}
\newtheorem{prop}[equation]{Proposition}
\newtheorem{ex}[equation]{Example}
\newtheorem{coro}[equation]{Corollary}
\theoremstyle{remark}
\newtheorem*{remark*}{Remark}
\newtheorem{remark}[equation]{Remark}
\theoremstyle{definition}
\numberwithin{figure}{section}
\numberwithin{table}{section}
\newcommand{\cev}[1]{\reflectbox{\ensuremath{\vec{\reflectbox{\ensuremath{#1}}}}}}
\DeclareMathOperator{\End}{End}
\DeclareMathOperator{\Adj}{Adj }
\DeclareMathOperator{\Der}{Der }
\DeclareMathOperator{\GL}{GL}
\DeclareMathOperator{\Sp}{Sp}
\DeclareMathOperator{\Gal}{Gal }
\DeclareMathOperator{\gl}{\mathfrak{gl}}
\DeclareMathOperator{\Inn}{Inn}
\DeclareMathOperator{\Aut}{Aut}
\DeclareMathOperator{\Isom}{Isom}
\DeclareMathOperator{\im}{im }
\DeclareMathOperator{\Out}{Out}
\DeclareMathOperator{\ad}{ad}
\newcommand{\M}{\mathbb{M}}
\newcommand{\cond}[2]{\overset{#1}{{_{#1}#2}_{#1}}}
\newcommand{\CC}[1]{\mathcal{C}_{#1}}
\newcommand{\LL}[1]{\mathcal{L}_{#1}}
\newcommand{\RR}[1]{\mathcal{R}_{#1}}
\newcommand{\MM}[1]{\mathcal{M}_{#1}}
\newcommand{\LMR}{\mathcal{LMR}}
\newcommand{\lversor}{\reflectbox{\ensuremath{\oslash}}}%\obackslash}
\newcommand{\rversor}{\oslash}
\newcommand{\bmto}{\rightarrowtail}
\title{Skolem-Noether for nilpotent products}
\author{James B. Wilson}
\address{
	Department of Mathematics\\
	Colorado State University\\
	Fort Collins, CO 80523\\
}
\email{James.Wilson@ColoState.Edu}
\date{\today}
\keywords{automorphisms, derivations, bilinear, Morita, Skolem-Noether}
\begin{document}

\maketitle

\begin{abstract}
We consider the structure of groups and algebras that can be represented as automorphisms,
respectively derivations, of bilinear maps.  This clarifies many features found in automorphisms
of associative and Lie algebras, and of groups that have nontrivial nilpotent radicals.
We introduce  fundamental structures and prove results akin to
theorems of Morita and Skolem-Noether.  
Applications and examples are included.
\end{abstract}

\section{Introduction}

When studying $M$-filtered algebras $A=\bigcup_{s\in M} A_s$
one inevitably encounters associated $k$-bimaps ($k$-bilinear maps). 
Here and throughout $M=\langle M, \prec, +,0\rangle$ is a pre-ordered 
commutative monoid, e.g. $\mathbb{N}^c$, and filters require that 
$A_s \cdot A_t\leq A_{s+t}$ and that $s\prec t$ implies $A_s\geq A_t$.  
The appropriate analog for groups $G=\bigcup_{s\in M} G_s$ replaces 
the product with group commutators $[x,y]=x^{-1} y^{-1} xy$, for instance 
the lower central series is a filter but there are many more.
Setting $\partial A_s=\langle A_{s+t}: t\neq 0\rangle$ allows for the restriction 
of the product in $A$ to $A_s/\partial A_{s}\times A_t/\partial A_t\bmto A_{s+t}/\partial A_{s+t}$,
and these are the bimaps that most commonly arise.  Details and applications can be found in several sources
including \citelist{\cite{Rowen:I}*{Chapter 7}\cite{Khukhro}\cite{Wilson:alpha}}.  

Properties of associated bimaps transfer naturally to the original filtered groups and algebras.
For instance a decomposition of a bimap into pairwise orthogonal factors characterizes the direct
and central products of groups and algebras \citelist{\cite{Myasnikov}\cite{Wilson:unique-cent}\cite{Wilson:RemakI}}.  
Isomorphisms and courser equivalences of isologism and isotopism translate to groups acting on bimaps.
Several recent projects are making bimaps a subject in their own right and these ideas offer valuable context
and techniques; cf.
\citelist{\cite{BFFM}\cite{First:forms}\cite{First:Morita}\cite{LW}\cite{Myasnikov}\cite{Wilson:division}\cite{BW:autotopism}}.

Our objective is to ascribe new structure to bimaps.  We are particularly interested in what groups and 
algebras can act on a bimap.  To explain this we introduce
a notion of ``inner/outer'' action for bimaps, and prove theorems of Skolem-Noether and Morita type.  These
remove many complexities concerning automorphism groups of nilpotent groups, and algebras and general filtered
products. 

Evidently theorems from Ring Theory inspired this work.
Yet, before we proceed a caution seems necessary.  Our subject is bimaps.
Take for example the mundane bimap of rectangular $(a,b,c)$-matrix multiplication:
\begin{align*}
	*:&\M_{a\times b}(k)\times \M_{b\times c}(k)\bmto \M_{a\times c}(k) &  [u_{ij}]*[v_{ij}] & = \left[\Sigma_k u_{ik}v_{kj}\right].
\end{align*}
Rectangular matrices in general have no identity, inverses, conjugation, idempotents, nilpotents, or polynomial identities.
So however ring-like our claims appear, the approach must be different.
%One side-effect of this work is to give these basic notions a concrete, and in our judgment natural, formulation for all bimaps.

\subsection{Main results}
Throughout $k$ is a commutative associative unital ring and all $k$-(bi)modules are unital.
A $k$-bimap $*:U_*\times V_*\bmto W_*$ consists of $k$-bimodules $U_*$, $V_*$, and $W_*$ with the two-sided
distributive law: $(u+u')*v=u*v+u'*v$ and $u*(v+v')=u*v+u*v'$; and the osmosis of scalars $s\in k$: 
$(su)*v=s(u*v)=u*(sv)$.

A {\em homotopism} $\phi$ between bimaps $*:U_*\times V_*\bmto W_*$ and $\bullet:U_{\bullet}\times V_{\bullet}\bmto W_{\bullet}$
is a triple $\phi=(R_{\phi}^U,R_{\phi}^V;R_{\phi}^W)\in \hom(U_*,U_{\bullet})\times \hom(V_*,V_{\bullet})\times \hom(W_*,W_{\bullet})$ where
\begin{align}\label{def:homotopism}
	u\phi * v\phi & = uR_{\phi}^U* vR_{\phi}^V = (u*v)R_{\phi}^W = (u*v)\phi.
\end{align}
Homotopisms form a category with the expected notions of mono-, epi-, iso-, and auto-topisms; and appropriate versions of
Noether's isomorphism theorems are satisfied.
Homotopisms were first used by Albert \cite{Albert:autotopism} for products $*:A\times A\bmto A$ of nonassociative $k$-algebras $A$.
Notice if $A=\bigcup_s A_s$ is a filtered algebra and each $A_s$ is characteristic then every 
automorphism $\phi\in \Aut(A)$ restricts to $R_{\phi}^s \in \Aut(A_s/\partial A_{s})$ and
$(R_{\phi}^s,R_{\phi}^t;R_{\phi}^{s+t})$ is an autotopism of the bimap
$A_s/\partial A_s \times A_t/\partial A_t\bmto A_{s+t}/\partial A_{s+t}$.  

In our notation operators act opposite to scalars (more in Section~\ref{sec:prelims}).
For each bimap $*:U\times V\bmto W$ we have three associative unital algebras called the 
{\em left}, {\em mid}, and {\em right} {\em scalars} (or {\em nuclei} in the nonassociative parlance).  These are:
\begin{align*}
	\LL{*} & = \{ \lambda\in \End(U_k)\times \End(W_k) : \forall u\forall v, (\lambda u)*v=\lambda (u*v) \},\\
	\MM{*} & = \{ \mu\in \End({_k U})\times \End(V_k) : \forall u\forall v, (u\mu)*v=u*(\mu v)\},\textnormal{ and}\\
	\RR{*} & = \{ \rho\in \End({_k V})\times \End({_k W}) : \forall u\forall v, u*(v\rho)= (u*v)\rho \}.
\end{align*}
Let $\LMR_*=\LL{*}\oplus \MM{*}\oplus \RR{*}$ and $Z(\LMR_*)$ be the center.
Historically $\MM{*}$ appears as the ring of adjoints; cf. \citelist{\cite{BFFM}\cite{BW:isom}}.  
Last, the {\em centroid} $\CC{*}$ replaces $k$; cf. \cite{Myasnikov}. 
\begin{align*}
	\CC{*} & = \{ \sigma\in \End(U_{\mathbb{Z}})\times \End(V_{\mathbb{Z}})\times \End(W_{\mathbb{Z}}) : 
			 \forall u\forall v, (u\sigma)*v= (u*v)\sigma=u*(v\sigma)\}.
\end{align*}
When we wish to focus on homotopisms that are linear with respect to
one or more of the rings above we indicate this with a subscript, e.g. $\Aut_{\CC{*}}(*)$ denotes
the $\CC{*}$-linear autotopisms of a bimap $*$.  Notice each of these rings is 
constrained by linear equations and so they can be efficiently computed for specific examples. 
There are highly tuned algorithms for that task described in \cite{BW:slope}.
\medskip

For convenience let us assume $*:U\times V\bmto W$ is {\em fully nondegenerate} in that $u*V=0$ implies $u=0$,
$U*v=0$ implies $v=0$, and $W=U*V$.  Later in Section~\ref{sec:degenerate} we handle the general case.
Our main effort is to introduce structure to autotopism groups that depend on the far better understood
properties of the rings $\LMR_*$ and $\CC{*}$. We prove:
\begin{thm}\label{thm:auto}
We have the following exact sequences of groups:
\begin{align}
\tag{A}
	1\to &  \LL{*}^{\times}  \to \Aut(*)  \to \Aut(V_*),\\
\tag{B}
	1\to &  \MM{*}^{\times}  \to \Aut(*) \to \Aut(W_*),\\
\tag{C}
	1\to & \RR{*}^{\times}  \to \Aut(*)   \to \Aut(U_*),\\
\tag{D}
	1\to & \Aut_{\CC{*}}(*)\to \Aut(*) \to \Out(\CC{*}),\textnormal{ and}\\
\tag{E}
1\to &  Z(\LMR_*)^{\times} \rightarrow \LMR_*^{\times}\times \Aut_{\LMR}(*) \rightarrow \Aut_{\CC{*}}(*) 
	\rightarrow \Out_{\CC{*}}(\LMR_*).
\end{align}
If $e^2=e\in \LMR_*$ such that $\LMR_*=(\LMR_*) e(\LMR_*)$ then 
\begin{align}
\tag{F}
	\Aut_{\LMR}(*:U\times V\bmto W) & \cong \Aut(eUe\times eVe\bmto eWe).
\end{align}
\end{thm}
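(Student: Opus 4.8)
The plan is to prove $(F)$ by Morita theory, by exhibiting the passage to $eUe\times eVe\bmto eWe$ as an equivalence of suitable categories of ``structured bimaps'' and then reading off automorphism groups. First I would make explicit the bimodule bookkeeping hidden in the definitions of $\LL{*},\MM{*},\RR{*}$: the identity $(\lambda u)*v=\lambda(u*v)$ makes each of $U$ and $W$ a left $\LL{*}$-module; $(u\mu)*v=u*(\mu v)$ makes $U$ a right $\MM{*}$-module and $V$ a left $\MM{*}$-module; $u*(v\rho)=(u*v)\rho$ makes $V$ a right $\RR{*}$-module and $W$ a right $\RR{*}$-module; and these actions commute appropriately, so $U$ is an $(\LL{*},\MM{*})$-bimodule, $V$ an $(\MM{*},\RR{*})$-bimodule, $W$ an $(\LL{*},\RR{*})$-bimodule, and $*$ is $\LL{*}$-linear, $\RR{*}$-linear, and $\MM{*}$-balanced, hence factors as an $(\LL{*},\RR{*})$-bimodule homomorphism out of $U\otimes_{\MM{*}}V$. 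Under this dictionary $\Aut_{\LMR}(*)$ is precisely the automorphism group of $(U,V,W)$ equipped with this factored map, in the category whose morphisms are triples of bimodule homomorphisms intertwining the structure maps.

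Next, write $e=(e_L,e_M,e_R)$ with $e_L^2=e_L\in\LL{*}$, $e_M^2=e_M\in\MM{*}$, $e_R^2=e_R\in\RR{*}$. Because $\LMR_*$ is a direct sum of rings, $\LMR_*=\LMR_* e\LMR_*$ splits into $\LL{*}=\LL{*}e_L\LL{*}$, $\MM{*}=\MM{*}e_M\MM{*}$, $\RR{*}=\RR{*}e_R\RR{*}$, i.e.\ each of $e_L,e_M,e_R$ is a full idempotent in its ring. Hence, for a ring $R$ with full idempotent $\epsilon$, the corner functor ($M\mapsto\epsilon M$ on left modules, $M\mapsto M\epsilon$ on right modules) is a Morita equivalence between the module category of $R$ and that of $\epsilon R\epsilon$, with quasi-inverse $R\epsilon\otimes_{\epsilon R\epsilon}(-)$; this is the only point at which the fullness hypothesis is used. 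Applying these equivalences on both sides of $U$, $V$, $W$ produces $eUe=e_LUe_M$, $eVe=e_MVe_R$, $eWe=e_LWe_R$ as bimodules over the corner rings $e_L\LL{*}e_L$, $e_M\MM{*}e_M$, $e_R\RR{*}e_R$; restricting $*$ and using $e_\bullet^2=e_\bullet$ together with the three scalar identities shows the restriction is a bimap $eUe\times eVe\bmto eWe$ linear over these corner rings, whose analogues of $\LMR_*$ and $\CC{*}$ are the corresponding corners (so the automorphism group on the right of $(F)$ is the one respecting the corner scalars). The map $\Phi\colon\Aut_{\LMR}(*)\to\Aut(eUe\times eVe\bmto eWe)$ is restriction, $(f,g;h)\mapsto(f|_{eUe},g|_{eVe};h|_{eWe})$, which makes sense since an $\LMR$-linear triple commutes with $e_L,e_M,e_R$ and so stabilises each corner; the candidate inverse $\Psi$ is induction: under the canonical identification $U\cong\LL{*}e_L\otimes_{e_L\LL{*}e_L}(eUe)\otimes_{e_M\MM{*}e_M}e_M\MM{*}$ (and likewise for $V$ and $W$), send $(\bar f,\bar g;\bar h)$ to $(\mathrm{id}\otimes\bar f\otimes\mathrm{id},\ \mathrm{id}\otimes\bar g\otimes\mathrm{id};\ \mathrm{id}\otimes\bar h\otimes\mathrm{id})$, which is automatically a triple of bimodule homomorphisms and hence $\LMR$-linear.

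Granting the compatibility described below, $\Phi$ and $\Psi$ are mutually inverse: this amounts to saying that the corner functor is fully faithful on the bimodules involved and carries the factored form of $*$ to that of its restriction, so that ``take the $e$-corner'' is an equivalence between the category of $(\LL{*},\MM{*},\RR{*})$-structured bimaps and that of $(e_L\LL{*}e_L,e_M\MM{*}e_M,e_R\RR{*}e_R)$-structured bimaps, and such an equivalence induces an isomorphism on automorphism groups---which is $(F)$. The step I expect to be the genuine obstacle is exactly that compatibility: that the three independent Morita equivalences mesh with the balanced structure of $*$, concretely that the natural map $eUe\otimes_{e_M\MM{*}e_M}eVe\to e_L\bigl(U\otimes_{\MM{*}}V\bigr)e_R$ is an isomorphism, so that the restricted bimap really is the transport of $*$. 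This in turn is the ``monoidality'' of a Morita equivalence: fullness of $e_M$ makes the multiplication map $\MM{*}e_M\otimes_{e_M\MM{*}e_M}e_M\MM{*}\to\MM{*}$ an isomorphism of $\MM{*}$-bimodules, whence $X\otimes_{\MM{*}}Y\cong Xe_M\otimes_{e_M\MM{*}e_M}e_MY$ naturally for any right $\MM{*}$-module $X$ and left $\MM{*}$-module $Y$, and taking the $e_L$-corner of the $U$-slot and the $e_R$-corner of the $V$-slot then gives the displayed isomorphism. Everything else---well-definedness of $\Phi$, that $\Psi$ actually produces an autotopism of $*$, and (should one want it) that full nondegeneracy of $*$ descends to the corner because the corner functors are exact---is routine.
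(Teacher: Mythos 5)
Your argument for part (F) is essentially the paper's own: restriction to the $e$-corner, with inverse given by induction along the Morita equivalence afforded by the full idempotent, and your identification of the one nontrivial compatibility (that $eUe\otimes_{e_M\MM{*}e_M}eVe\to e_L(U\otimes_{\MM{*}}V)e_R$ is an isomorphism because $\MM{*}e_M\otimes_{e_M\MM{*}e_M}e_M\MM{*}\to\MM{*}$ is) is exactly the point the paper discharges via the natural isomorphism $P\otimes P^{\circ}\otimes U\cong U$. One small caveat: the commutativity of the left and right scalar actions on $U$, $V$, $W$ (hence the bimodule structure you invoke at the outset) is not automatic but uses full nondegeneracy of $*$; in general one only gets $(\lambda u)\mu-\lambda(u\mu)\in V^{\top}$. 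Since full nondegeneracy is a standing hypothesis this is harmless, but it should be said.

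The genuine gap is that your proposal proves only one of the six assertions. Parts (A)--(E) are never addressed, and they do not follow from the Morita argument; they require a different set of constructions. For (A)--(C) one must exhibit the map $\LMR_*^{\times}\to\Aut(*)$, $(\lambda,\mu,\rho)\mapsto(R_{\mu}^{U}L_{\lambda^{-1}}^{U},R_{\rho}^{V}L_{\mu^{-1}}^{V};R_{\rho}^{W}L_{\lambda^{-1}}^{W})$, and, conversely, show that an autotopism $\phi$ with $R_{\phi}^{V}=1$ satisfies $(uL_{\phi^{-1}}^{U})*v=(u*v)L_{\phi^{-1}}^{W}$, so that $(L_{\phi^{-1}}^{U},L_{\phi^{-1}}^{W})\in\LL{*}^{\times}$; this is what identifies the kernel of $\Aut(*)\to\Aut(V_*)$ with $\LL{*}^{\times}$, and similarly for $\MM{*}^{\times}$ and $\RR{*}^{\times}$. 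For (D) one needs that $\Aut(*)$ acts by conjugation on $\CC{*}$ with kernel $\Aut_{\CC{*}}(*)$, and that commutativity of $\CC{*}$ (again via full nondegeneracy) makes every such automorphism outer. For (E) one must combine the above map with the conjugation action $\mathfrak{H}:\Aut(*)\to\Aut(\LMR_*)$, compute that the kernel of $\LMR_*^{\times}\times\Aut_{\LMR}(*)\to\Aut(*)$ is the diagonal copy of $Z(\LMR_*)^{\times}$, and verify that the image of $\LMR_*^{\times}$ under $\mathfrak{H}$ lands in $\Inn(\LMR_*)$ so that the sequence can be continued to $\Out_{\CC{*}}(\LMR_*)$. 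None of this is implicit in your corner-functor argument, so as it stands the proposal establishes (F) but leaves the bulk of the theorem unproved.
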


In Section~\ref{sec:archetype} we apply these technical decompositions of Theorem~\ref{thm:auto} to a wide range of algebraic
systems, particularly nilpotent groups and rings.  One example
is sufficiently elementary for an introduction.
Recall the Skolem-Noether theorem asserts that $k$-linear ring automorphisms of $\M_a(k)$ are inner, i.e.
$\GL_a(k)/k^{\times}$; cf. \cite{Rowen:II}*{p. 460}.   A scholium to Theorem~\ref{thm:auto}(E) is the observation that
the image of $\LMR^{\times}_*$ in $\Aut(*)$, 
as given by equations (A), (B), and (C), induces inner automorphisms acting on $\LMR_*$.
Though bimaps have no notion of conjugation, using $\LMR_*$ we make
a meaningful sense of ``inner autotopisms'' as follows:
\begin{align}\label{def:inner}
	\Inn(*) & = \LMR_*^{\times}/\CC{*}^{\times}.
\end{align}
Using Theorem~\ref{thm:auto} and resolving the necessary computations we prove:

\begin{thm}[Generalized Skolem-Noether]\label{thm:Skolem-Noether}
The $k$-linear autotopisms of  matrix multiplication 
$*:\M_{a\times b}(k)\times \M_{b\times c}(k)\bmto \M_{a\times c}(k)$ over a field $k$ are
\begin{align*}
	\Aut_k(*)  & = \Inn(*) \cong \frac{\GL_a(k)\times \GL_b(k)\times \GL_c(k)}{\{ (sI_a,sI_b,sI_c) : s\in k^{\times}\}}.
\end{align*}
\end{thm}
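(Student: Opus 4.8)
The plan is to run the matrix-multiplication bimap $*$ through \thmref{thm:auto}, which first requires its scalar rings and centroid. All of these are forced by evaluating the defining linear equations on the matrix units $E_{ij}$: a left scalar $\lambda=(\lambda_U,\lambda_W)$ must be a pair of left multiplications $u\mapsto Pu$, $w\mapsto Pw$ by a common $P\in\M_a(k)$, so $\LL{*}\cong\M_a(k)$, and symmetrically $\MM{*}\cong\M_b(k)$ (acting on $U$ on the right and on $V$ on the left) and $\RR{*}\cong\M_c(k)$. Hence $\LMR_*\cong\M_a(k)\oplus\M_b(k)\oplus\M_c(k)$, so $\LMR_*^\times=\GL_a(k)\times\GL_b(k)\times\GL_c(k)$ and $Z(\LMR_*)^\times=(k^\times)^3$. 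For the centroid the same computation shows every $\sigma=(\sigma_U,\sigma_V,\sigma_W)$ must agree with a single scalar $s\in k$ on each $E_{ij}$, and full nondegeneracy of $*$ (also a one-line check on matrix units) then promotes this to $\sigma=(s\cdot,s\cdot,s\cdot)$ throughout; thus $\CC{*}\cong k$, a field, and in particular $\Aut_{\CC{*}}(*)=\Aut_k(*)$ is the group we must identify.

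Next I would make $\Inn(*)$ explicit. Unwinding the embeddings of $\LL{*}^\times$, $\MM{*}^\times$, $\RR{*}^\times$ into $\Aut(*)$ given by parts (A), (B), (C) of \thmref{thm:auto}, a matrix $P\in\GL_a(k)$ acts as $(u,v,w)\mapsto(Pu,v,Pw)$, a matrix $R\in\GL_b(k)$ as $(u,v,w)\mapsto(uR,R^{-1}v,w)$, and $Q\in\GL_c(k)$ as $(u,v,w)\mapsto(u,vQ,wQ)$, so that their joint image is the group of autotopisms $(u,v,w)\mapsto(PuR,\,R^{-1}vQ,\,PwQ)$. A short matrix-unit computation identifies the kernel of $\GL_a(k)\times\GL_b(k)\times\GL_c(k)\to\Aut(*)$ with $\{(sI_a,s^{-1}I_b,s^{-1}I_c):s\in k^\times\}$, which is the image of $\CC{*}^\times$; by \eqref{def:inner} the quotient is $\Inn(*)$, and the contragredient automorphism $X\mapsto(X^{\mathsf T})^{-1}$ applied in the $\GL_b$ and $\GL_c$ coordinates rewrites the relation as $(sI_a,sI_b,sI_c)$, matching the statement.

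It remains to prove $\Aut_k(*)=\Inn(*)$, via part (E) of \thmref{thm:auto}. Two inputs are needed. First, the connecting map $\Aut_{\CC{*}}(*)\to\Out_{\CC{*}}(\LMR_*)$ is trivial: by the classical Skolem--Noether theorem every $k$-algebra automorphism of $\M_n(k)$ is inner, so $\Out_{\CC{*}}(\LMR_*)$ is only the group of dimension-preserving permutations of the three simple summands of $\LMR_*$; but an autotopism of $*$ induces an automorphism of $\LMR_*$ carrying each of $\LL{*}$, $\MM{*}$, $\RR{*}$ to itself (they are functorial in $*$), hence fixing the decomposition and therefore inner. Second, $\Aut_{\LMR}(*)$ contributes nothing new: applying part (F) of \thmref{thm:auto} to the full idempotent $e=(E_{11},E_{11},E_{11})\in\LMR_*$ (full because $\M_n(k)E_{11}\M_n(k)=\M_n(k)$), the corners become $eUe\cong eVe\cong eWe\cong k$ via the $(1,1)$-entry and the corner bimap is field multiplication $k\times k\bmto k$, whose $k$-linear autotopisms are $\{(s\cdot,t\cdot,st\cdot):s,t\in k^\times\}$ --- and each such autotopism is already the image of $(sI_a,I_b,tI_c)\in\LMR_*^\times$. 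Exactness of (E) then forces $\Aut_{\CC{*}}(*)$ to equal the image of $\LMR_*^\times\times\Aut_{\LMR}(*)$, which by the above is the image of $\LMR_*^\times$, namely $\Inn(*)$; since $\CC{*}=k$ this is $\Aut_k(*)$.

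The genuine obstacle is the triviality of the connecting map to $\Out_{\CC{*}}(\LMR_*)$ --- the step that actually rules out unexpected $k$-linear symmetries --- and it depends on two facts: that autotopisms respect the left/mid/right scalar decomposition of $\LMR_*$, and the classical Skolem--Noether theorem for $\M_n(k)$. Everything else is the routine (if op-convention-sensitive) computation of $\LL{*},\MM{*},\RR{*},\CC{*}$ and the bookkeeping that brings the kernels and corner bimaps into the stated shape.
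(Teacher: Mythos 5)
Your proposal is correct and follows essentially the same route as the paper: compute $\LL{*}\cong\M_a(k)$, $\MM{*}\cong\M_b(k)$, $\RR{*}\cong\M_c(k)$, $\CC{*}\cong k$, apply the exact sequence of Theorem~\ref{thm:auto}(E) with the classical Skolem--Noether theorem killing the outer term, and use the Morita condensation of Theorem~\ref{thm:auto}(F) at a full primitive idempotent to identify $\Aut_{\LMR}(*)\cong k^\times\times k^\times$, which is already inner. Your additional remarks (why the connecting map to $\Out_{\CC{*}}(\LMR_*)$ is trivial, and the reparametrization of the central kernel) are details the paper leaves implicit but do not change the argument.
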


This thinking makes natural proofs of the following sort. Fix a field $k$, positive integers $a,b,c$, and a 
multiplicatively closed  nonemepty subset $S$ of $k$.  Define
\begin{align*}
	\mathcal{T}_{a,b,c}(k;S) & = \left\{\begin{bmatrix} sI_a & A & C \\ 0 & sI_b & B \\ 0 & 0 & sI_c \end{bmatrix}:s\in S, 
		\begin{array}{c} A\in \M_{a\times b}(k),\\ B\in \M_{b\times c}(k),\\ C\in \M_{a\times c}(k)\end{array}
	\right\}.
\end{align*}
Notice $\mathcal{T}_{a,b,c}(k;k)=k\oplus \mathcal{N}$ is a local unital associative $k$-algebra with nilpotent radical 
$\mathcal{N}=\mathcal{T}_{a,b,c}(k;\{0\})$.  This is as close to nilpotent as a unital algebra can be.  Changing
$S$ we find $\mathcal{T}_{a,b,c}(k;\{1\})$ is a nilpotent group under multiplication.  Also
$\mathcal{T}_{a,b,c}(k;\{0\})$ under commutation is a nilpotent Lie $k$-algebra. 
We prove:

\begin{coro}\label{coro:tri} The associative ring automorphisms of  $\mathcal{T}_{a,b,c}(k;k)$ are the groups
\begingroup
%\verb|\thinmuskip=0mu:| \par
\setlength{\thinmuskip}{0mu}
%\begin{align*}
$$	k^{(ab+bc)(ac)}{\rtimes}\left(
			\frac{\GL_a(k){\times}\GL_b(k){\times}\GL_c(k)}{\langle (s,s,s) : s\in k^{\times}\rangle}			
			\right){\rtimes}\Gal(k).$$
%\end{align*} 
\endgroup
The Lie ring automorphisms of $\mathcal{T}_{a,b,c}(k;\{0\})$ are the groups
\begin{align*}
\begin{array}{lc}
		k^{2b}\rtimes \left({\rm Sp}_{2b}(k)\times k^{\times}\right)\rtimes \Gal(k), &  a=c=1;\\
		k^{(ab+bc)(ac)}\rtimes (\GL_b(k)\times k^{\times})\rtimes \Gal(k), & 1<a,c;\\
		k^{(ab+bc)(ac)}\rtimes \left(k^{ab^2c}\rtimes (\GL_b(k)\times k^{\times})\right)\rtimes \Gal(k), & \textnormal{else}.\\
	\end{array}
\end{align*}
Finally for fields $k=2k$, the group automorphisms of $\mathcal{T}_{a,b,c}(k;\{1\})$ agree with the Lie ring automorphisms
of $\mathcal{T}_{a,b,c}(k;\{0\})$.
\end{coro}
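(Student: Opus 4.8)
The plan is to treat all three rings uniformly as $2$-step nilpotent objects and to push every automorphism down to a (possibly $\Gal(k)$-semilinear) autotopism of an associated bimap, where Theorems~\ref{thm:auto} and~\ref{thm:Skolem-Noether} apply. Write $\mathcal N=\mathcal T_{a,b,c}(k;\{0\})$ with blocks $A\in\M_{a\times b}(k)$, $B\in\M_{b\times c}(k)$, $C\in\M_{a\times c}(k)$. Multiplying blocks one checks $\mathcal N^3=0$, that $\mathcal N^2$ is the $C$-block $\cong\M_{a\times c}(k)$, and that $\mathcal N/\mathcal N^2\cong\M_{a\times b}(k)\oplus\M_{b\times c}(k)$ with induced product $((A_1,B_1),(A_2,B_2))\mapsto A_1B_2$; that is, the associated bimap of $\mathcal T_{a,b,c}(k;k)$ is a padded copy of matrix multiplication $*\colon\M_{a\times b}(k)\times\M_{b\times c}(k)\bmto\M_{a\times c}(k)$. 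Replacing the product by a commutator gives instead the bimap $\langle x,y\rangle=x*y-y*x$, i.e. $((A_1,B_1),(A_2,B_2))\mapsto A_1B_2-A_2B_1$, a nondegenerate alternating bimap; moreover $[\mathcal N,\mathcal N]=\mathcal N^2=Z(\mathcal N)$, so $\mathcal T_{a,b,c}(k;\{0\})$ is a \emph{special} class-$2$ Lie ring, and the same bimap governs the group $\mathcal T_{a,b,c}(k;\{1\})$ through group commutators.

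Since $\mathcal N$ (respectively $\mathcal N^2$, the derived subgroup) is characteristic, every automorphism of $\mathcal T_{a,b,c}(k;\bullet)$ induces a $\Gal(k)$-semilinear autotopism of the corresponding bimap, the Galois twist being read off from the action on $\mathcal T/\mathcal N\cong k$ (associative case) or on $Z(\mathcal N)$ (Lie and group cases). One then shows the sequence
\[ 1\longrightarrow\hom_k\bigl(\mathcal N/\mathcal N^2,\mathcal N^2\bigr)\longrightarrow\Aut\bigl(\mathcal T_{a,b,c}(k;\bullet)\bigr)\longrightarrow\Gamma_\bullet\longrightarrow 1 \]
is exact and split, with $\Gamma_\bullet$ the group of $\Gal(k)$-semilinear autotopisms of the bimap and the kernel the ``unipotent'' maps $1+\delta$, $\delta\colon\mathcal N/\mathcal N^2\to\mathcal N^2$: in class $2$ every such $\delta$ is an automorphism (no cocycle obstruction survives), and the extension splits, the splitting being exhibited concretely below. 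This yields the leading factor $k^{(ab+bc)(ac)}$ (which is $k^{2b}$ when $a=c=1$) and, after the further splitting $\Gamma_\bullet=\Aut_k(\text{bimap})\rtimes\Gal(k)$, the trailing $\rtimes\Gal(k)$; it remains to compute $\Aut_k$ of the bimap and, in the Lie case, the extra factors.

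For the associative ring the bimap is matrix multiplication, so Theorem~\ref{thm:Skolem-Noether} gives $\Aut_k(*)=\Inn(*)\cong(\GL_a(k)\times\GL_b(k)\times\GL_c(k))/\langle(sI_a,sI_b,sI_c)\rangle$; conjugation by the block-diagonal invertibles $\mathrm{diag}(P,Q,R)$ normalizes $\mathcal T_{a,b,c}(k;k)$ and realizes the splitting of this quotient, the kernel of $\GL_a\times\GL_b\times\GL_c$ being exactly the central block-diagonals (from $PA=AQ$ and $QB=BR$ for all $A,B$), while entrywise $\sigma\in\Gal(k)$ splits $\Gal(k)$; this assembles the first formula. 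For the Lie ring one instead computes $\Aut_k(\langle\cdot,\cdot\rangle)$: writing a candidate autotopism $T$ of $\M_{a\times b}(k)\oplus\M_{b\times c}(k)$ in $2\times2$ block form $\bigl(\begin{smallmatrix}T_{AA}&T_{AB}\\T_{BA}&T_{BB}\end{smallmatrix}\bigr)$ (so $T_{AA}A$ is the image of $A$ under $T_{AA}$, etc.) and expanding $\langle Tx,Ty\rangle=S\langle x,y\rangle$ by multidegree in $(A_1,A_2,B_1,B_2)$ produces the identities $(T_{AA}A_1)(T_{BA}A_2)=(T_{AA}A_2)(T_{BA}A_1)$, its $B$-analogue $(T_{AB}B_1)(T_{BB}B_2)=(T_{AB}B_2)(T_{BB}B_1)$, and $(T_{AA}A)(T_{BB}B)-(T_{AB}B)(T_{BA}A)=S(AB)$. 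If $a>1$ the first forces $T_{BA}=0$ and if $c>1$ the second forces $T_{AB}=0$, reducing to an autotopism of matrix multiplication (Theorem~\ref{thm:Skolem-Noether}); if $a=c=1$ the bimap is a nondegenerate alternating form $k^{2b}\times k^{2b}\bmto k$, whose autotopisms are the symplectic similitudes, and splitting off the similitude character gives the $\mathrm{Sp}_{2b}(k)$ and $k^\times$ factors; in the remaining (mixed) case, say $a=1<c$, exactly one off-diagonal block survives, and the admissible such blocks form an abelian normal factor (the $k^{ab^2c}$ of the statement) on which the diagonal part acts. Combined with the unipotent radical and $\Gal(k)$ these give the three displayed cases.

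Finally, when $2\in k^{\times}$ (that is, $k=2k$) the group $\mathcal T_{a,b,c}(k;\{1\})$ has nilpotency class $2$, so the Lazard correspondence applies: $\exp\colon N\mapsto I+N+\tfrac12N^2$ and $\log\colon I+N\mapsto N-\tfrac12N^2$ convert the group product into the Baker--Campbell--Hausdorff product $N\ast M=N+M+\tfrac12[N,M]$ on $\mathcal N$, so that group automorphisms of $\mathcal T_{a,b,c}(k;\{1\})$ are precisely Lie ring automorphisms of $\mathcal T_{a,b,c}(k;\{0\})$, and the last claim follows from the previous one. I expect the principal obstacle to lie in the Lie computation: pinning down the autotopism group of the commutator bimap of rectangular matrix multiplication, in particular the trichotomy $a=c=1$ / $1<a,c$ / mixed, which is exactly where the bimap and its ring $\LMR$ change character — a nondegenerate alternating form; genuine matrix multiplication with $\LMR=\M_a(k)\oplus\M_b(k)\oplus\M_c(k)$; and a degenerate intermediate case contributing an extra unipotent block. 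Verifying that the unipotent radical is full and that all splittings come from the explicit matrix operations is then routine bookkeeping.
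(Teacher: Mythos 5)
Your proposal is correct in substance and follows the paper's overall skeleton: peel off the unipotent kernel $\hom_k(N/N^2,N^2)\cong k^{(ab+bc)(ac)}$ and the $\Gal(k)$ twist, then identify what remains with an autotopism-type group of the (commutator) bimap and invoke the generalized Skolem--Noether theorem --- this is exactly Theorem~\ref{thm:Aut(T)} followed by Theorems~\ref{thm:Skolem-Noether} and~\ref{thm:pseudo}. Where you genuinely diverge is in the Lie and group cases. The paper does not compute $\Aut_k(\langle\cdot,\cdot\rangle)$ by a direct block expansion: it packages the commutator bimap as the weakly Hermitian sum $\#=*\perp(\epsilon\tilde{*})$, computes the ring $\MM{\#}$ with its involution (Proposition~\ref{prop:twist}), identifies your off-diagonal blocks $T_{AB},T_{BA}$ with the adjoint-morphism spaces $\Adj(*,\epsilon\tilde{*})$ and $\Adj(\epsilon\tilde{*},*)$ --- which vanish precisely when $a>1$, resp.\ $c>1$ (Example~\ref{ex:adj-mat}) --- and then reads off $\Psi\Isom(\#)$ from the unitary group $U(\LMR_{\#},\bar{\cdot})$ via Theorem~\ref{thm:pseudo}. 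Your multidegree expansion reaches the same trichotomy, but the two assertions you leave unproved (``$a>1$ forces $T_{BA}=0$'' and ``the admissible off-diagonal blocks form an abelian normal factor $k^{ab^2c}$'') are exactly the content supplied by the paper's \emph{linear} adjoint computation and by the observation that $1+N$ lies in $U(\LMR_{\#},\bar{\cdot})$ for $N$ the radical of $\MM{\#}$; note in particular that your defining conditions are quadratic in $T$, so it is not automatic that the admissible off-diagonal parts form a linear space or a normal subgroup --- that is what the ring structure of $\MM{\#}$ buys. Your Lazard/BCH argument for the group case is a clean alternative to the paper's explicit isomorphism $\mathcal{T}(*;\{1\})\cong\mathcal{U}(\tfrac{1}{2}\#;\{1\})$; both require $k=2k$. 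One small point to tighten: for the Lie ring (which has no unit) the $\Gal(k)$ factor is not read off from the action on $Z(\mathcal{N})$ alone --- an arbitrary additive automorphism normalizes the centroid $\CC{\#}\cong k$ and acts on it by a ring automorphism, and that is where the semilinear twist lives (Theorem~\ref{thm:auto}(D)).
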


\subsection{A comparison with semisimple contexts}
  Corollary~\ref{coro:tri} is mostly intended as a small demonstration 
of Theorem~\ref{thm:auto}.  The case for $a=c=1$ is well-known as it coincides
with Heisenberg groups and algebras.  However, the general case might not have
been probed before.  One variation studied in detail in \citelist{\cite{AAB}\cite{C}} offers a relevant point of comparison and
shows why nilpotence can be such a difficult starting point.

Consider the rings $\mathcal{B}=\mathcal{B}_{a,b,c}(k)=(\M_a(k)\oplus\M_b(k)\oplus \M_c(k))\oplus \mathcal{N}$ 
of full $(a,b,c)$-block triangular matrices.  
To explain $\Aut(\mathcal{B})$ first apply
Wedderburn's principal theorem so that the decompostion
of $\mathcal{B}$ into $S=\M_a(k)\oplus \M_b(k)\oplus\M_c(k)$
and $\mathcal{N}$ is unique upto conjugation.  By Krull-Schmidt we
can either permute the simple factors, or we induce automorphisms
of the simple rings -- those we know are inner or induced by field automorphisms
because of the usual Skolem-Noether.
Permutations of the simple factors do not lift to $\Aut(\mathcal{B})$ as
that would involves interchanging left and right ideals inside $\mathcal{N}$.
Next, an automorphism that centralizes $S$ will act on $\mathcal{N}$ as an 
$S$-bilinear endomorphism.  By Schur's lemma the action is as scalars 
on each simple factor of the semisimple $S$-bimodule 
$J=\M_{a\times b}(k)\oplus \M_{b\times c}(k)\oplus \M_{a\times c}(k)$.
As a result the only actions are inner automorphisms of $\mathcal{B}$, and coordinatewise field automorphisms. 
Many more general treatments are known \citelist{\cite{AAB}\cite{C}}. 

Now let us compare with our claims in Corollary~\ref{coro:tri}.
Because $\mathcal{T}_{a,b,c}(k;S)$ is local (resp. nilpotent), 
{\em none} of the above cited ingredients (e.g. theorems of
Krull-Schmidt, Wedderburn, Skolem-Noether, Schur) offer any information
to the problem of constructing $\Aut(\mathcal{T}_{a,b,c}(k;S))$.  In fact,
the values of $a,b,c$ and the matrix structure so apparent in the definition
is in no immediate way a parameter recognized by automorphisms.  The groups
that apriori could have been represented as automorphisms might have been
as large as $\GL_n(k)$ where $n=ab+bc$ is the dimension of $\mathcal{N}/\mathcal{N}^2$.
Indeed, those familiar with the constructions of the automorphism group
for Heisenberg groups and algebras -- the case where $a=c=1$ -- know 
that the proof hinges on a completely different and delicate arrangement where
$\mathcal{T}_{1,b,1}(k;S)$ affords a nondegenerate alternating form.
Generalizing to arbitrary $(a,b,c)$ that approach encounters problems of
wild representation type, and quickly becomes unfeasible.  So under scrutiny, 
$\mathcal{T}_{a,b,c}(k;S)$ is not as mundane as might be predicted.
\medskip

In broad strokes the theme of this article is that we can recover 
missing nice structure such as semisimplicity even if on the surface a product appears to be nilpotent.  Our title reflects this philosophy:
Skolem-Noether type theorems can be applied to nilpotent products.

\subsection{Outline of the paper}

We start in Section~\ref{sec:prelims} giving basic definitions and notation.  
In Section~\ref{sec:rings} we inspect the associated rings $\LMR_*=\LL{*}\oplus \MM{*}\oplus \RR{*}$ of a bimap $*$ and 
describe their universal properties and relationship to tensor and versor products (Theorem~\ref{thm:universal}).  
In Sections \ref{sec:aut} we prove Theorem~\ref{thm:auto} parts (A)--(E), but first in Section~\ref{sec:der} we prove an analogous
but easier case for the Lie algebra of derivations of a bimap. 
Then in Section~\ref{sec:Morita} we introduce a theorem akin to Morita 
condensation and prove Theorem~\ref{thm:auto}(F).  
In Section~\ref{sec:universal} we begin 
to transfer the general structure theorems to specific bimaps including tensor and versor products as well as 
proving our Skolem-Neother Theorem~\ref{thm:Skolem-Noether}.
In Section~\ref{sec:sym-deg} we generalize the claims
to symmetric, alternating, and weakly-Hermitian bimaps as well as degenerate bimaps.  
Finally in Section~\ref{sec:archetype}
we explain the implications to isomorphism problems of nilpotent groups and algebras (Figure~\ref{fig:tensor-versor-triangle})
and prove Corollary~\ref{coro:tri}.

%==================================================================================================
%
%==================================================================================================
\section{Preliminaries}\label{sec:prelims}
Throughout we assume $k$ is a commutative unital ring and that $*:U\times V\bmto W$ is a $k$-bimap 
of $k$-bimodules $U$, $V$, and $W$.  We act opposite to scalars, so given $\phi\in \End(_k U)$ we
evaluate $u\in U$ as $u\phi=uR_{\phi}^U$.  Likewise write $\phi u=uL_{\phi}^U$ for $\phi\in \End(U_k)$.
General preliminaries are found here \citelist{\cite{Rowen:I}\cite{Rowen:II}}.

We mentioned that we will initially assume all bimaps are fully nondegenerate.
When this fails we can measure the defect through on or more of the following
{\em radicals}
\begin{align*}
	U^{\bot} & = \{ v\in V: U*v=0\}, \\
	V^{\top} & = \{ u\in U: u*V=0\},\\
	W^{+} & = W/(U*V)=W/\langle u*v : u\in U, v\in V\rangle.
\end{align*}
When all three are trivial we say that $*$ is fully nondegenerate.

A {\em swap} $\tilde{*}$ of a bimap $*:U_*\times V_*\bmto W_*$ is defined by $U_{\tilde{*}}=V_*$, $V_{\tilde{*}}=U_*$, 
$W_{\tilde{*}}=W_*$ and with product $v~\tilde{*}~u = u*v$.  The swap sends homotopisms
$\phi=(R^U_{\phi},R_{\phi}^V; R_\phi^W)$ to $\tilde{\phi} = (R_{\phi}^V,R_{\phi}^U; R_{\phi}^W)$.  

\begin{remark}
In earlier treatments 
\citelist{\cite{Knuth}*{Section 4}\cite{Wilson:division}*{p. 3994}} such deformations where discussed as a generalized ``transpose''.  
This is a contra-variant functor on the adjoint category of bimaps,
but it is co-variant on the homotopism category. A co-variant ``transpose'' fights common practice so we use ``swap'' instead.
\end{remark}

Call a bimap {\em weakly Hermitian} if there is an isotopism $\tau:*\to \tilde{*}$ such that $\tau \tilde{\tau}=1_*$ and $\tilde{\tau} \tau=1_{\tilde{*}}$;
in particular, $U_*\cong V_*$ as $k$-modules.  If $U_*=V_*$ and $R_{\tau}^U=R_{\tau}^V=1$ then we say $*$ is {\em Hermitian}.  
Define the {\em pseudo-isometry} group as
\begin{align}\label{def:pseudo}
	\Psi\Isom(*,\tau) & = \left\{ \phi\in \Aut(*) : \phi \tau = \tau \tilde{\phi}\right\}.
\end{align}
In general $\tau$ can influence the isomorphism type of the group $\Psi\Isom(*,\tau)$, even in the case of classical Hermitian
forms; see \cite{BHRD}.
\medskip

For an associative ring $A$, let $A^\times$ denote the group of units, and $A^{-}$ the Lie ring with product $[x,y]=xy-yx$.
$\End(_k U)$ is the endomorphism ring of ${_k U}$ and
\begin{align*}
	\GL(_k U) & = \End(_k U)^{\times} & 
	\gl(_k U) & = \End(_k U)^{-}.
\end{align*}
$R_{\alpha}^U\mapsto L^U_{\alpha}$ is the anti-isomorphism $\End(_k U)\to\End(U_k)$.  Write $A^{\circ}$ for the $op$-ring of $A$.

%==================================================================================================
%
%==================================================================================================
\section{The universal rings for bimaps}\label{sec:rings}

We describe the rings that arise naturally along side distributive products $*:U\times V\bmto W$.
  We begin by describing the rings that act as left, mid, and right scalars, followed by
the centroid.  We give these rings a universal description (Theorem~\ref{thm:universal}).

\subsection{Scalar rings}
There are at least four reasonable notions of scalar actions on a bimap; we begin with three. 
Fix operators $\LL{}\to \End(U_{\mathbb{Z}})\times\End(W_{\mathbb{Z}})$, 
$\MM{}\to \End({_{\mathbb{Z}} U})\times \End(V_{\mathbb{Z}})$, and $\RR{}\to \End({_{\mathbb{Z}} V})\times \End({_{\mathbb{Z}} W})$.
Call $*$ {\em left $\LL{}$-linear}, {\em mid $\MM{}$-linear}, and {\em right $\RR{}$-linear} if each of the respective properties holds.
\begin{align*}
	(\forall & \lambda\in \LL{}) & (\lambda u)*v & = \lambda (u*v),\\
	(\forall & \mu\in \MM{}) & (u\mu)*v & =  u*(\mu v),\\
	(\forall & \rho\in \RR{}) & u*(v\rho) & =  (u*v)\rho.	
\end{align*}
An {\em $\LMR$-bimap} indicates the simultaneous left, mid, and right
linearities.  Evidently every bimap $*$ is automatically an $\LMR_*$-bimap for the ring
$\LMR_*=\LL{*}\oplus \MM{*}\oplus \RR{*}$ provided in the introduction.
Those familiar with nonassociative ring theory should compare $\LL{*}$, $\MM{*}$, and $\RR{*}$ 
to the left, middle, and right nuclei of a ring \cite{Schafer}*{Chapter I}.

\begin{remark}\label{rem:notation}
We will mostly conform to the conventions just demonstrated, and we will also 
write $\lambda=(L_{\lambda}^U,L^U_{\lambda})$, or $\mu=(R_{\mu}^U,L_{\mu}^V)$, etc. to indicate 
explicitly what action is under consideration. The need to be particular in notation is that most of
our arguments are systematic verification of carefully arranged definitions.  Such automatic 
proofs become unreasonable to reconstruct if the understanding of left/right actions becomes too muddled.
\end{remark}

\begin{ex}
\begin{enumerate}[(i)]
\item If $*:A\times A\bmto A$ is the product of an associative unital ring $A$ then $\LL{*}\cong A^{\circ}$ and
$\MM{*}\cong \RR{*}\cong A$.
\item  If $*:A\times M\bmto M$ is the product of a faithful associative left $A$-module $M$ 
then $\RR{*}=\End({_A M})$.
\item  If $*:M\times M\bmto k$ is a nondegenerate $k$-bilinear form 
then $\MM{*}$ is the usual ring of adjoints and $\LL{*}$ and $\RR{*}$ are copies of $k$.
\end{enumerate}
\end{ex}
\begin{proof}
(i) Fix $\lambda\in \LL{*}$.  Set $a=\lambda 1=1L_{\lambda}^U$ (recall we use $U$ to indicate position, even
though in this case $U=V=W=A$.)  Now for all $b\in A$, $bL_{\lambda}^W=(1*b)L_{\lambda}^W=(1L_{\lambda}^U)*b=a*b$.
So $L_{\lambda}^W=L_a$.  Likewise, $L_{\lambda}^U=L_a$.  So $\LL{*}\cong A^{\circ}$.  The rest follows.  Both (ii) and (iii)
can be seen directly from the definitions.
\end{proof}

An important example throughout is that the left, mid, and right scalar rings
of $(a,b,c)$-matrix multiplication are 
 $\M_a(k)$, $\M_b(k)$, and $\M_c(k)$ respectively.  While it is evident that these rings act appropriately, it remains to show equality.

Let $U\otimes_{\MM{}} V$ be the usual Whitney tensor product, and put 
\begin{align*}
	U\;{_{\LL{}}\lversor}\; W & = \hom(_{\LL{}} U,_{\LL{}} W) & 
	W\rversor_{\RR{}} V & = \hom(V_{\RR{}},W_{\RR{}})
\end{align*}
We call $\lversor$ and
$\rversor$ left, resp. right, \emph{versor products}.  Just as with tensor products we  have associated bimaps 
${_{\LL{}}\lversor}:U\times U\;{_{\LL{}}\lversor}\; W\bmto W$ and $\rversor_{\RR{}}:W\rversor_{\RR{}} V\times V\bmto W$ of evaluation.
We  suppress scalars when context permits.
See also \cite{Wilson:division}*{Section 2.3}.

\begin{prop}
Every $\LMR$-bimap $*:U\times V\bmto W$ determines unique additive homomorphisms
\begin{align*}
	\cev{*}:& U\to W\rversor_{\RR{}} V & u & \mapsto (v\mapsto u*v)\\
	\hat{*}:&U\otimes_{\MM{}} V\to W &  u\otimes v & \mapsto u*v\\
	\vec{*}:& V\to U{_{\LL{}}\lversor} W &  v &\mapsto (u\mapsto u*v).
\end{align*}
Furthermore, $\cev{*}$ is $(\LL{},\MM{}^{\circ})$-linear, $\hat{*}$ is $(\LL{},\RR{})$-linear, and $\vec{*}$ is $(\MM{}^{\circ},\RR{})$-linear.
\end{prop}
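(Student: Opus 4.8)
The plan is to handle the three maps separately, since in each case the same three steps recur. First I would check that the displayed formula actually produces an element of the asserted target --- a $\hom$-set for $\cev{*}$ and $\vec{*}$, a well-defined map out of a tensor product for $\hat{*}$; then that the resulting assignment is itself additive, which is immediate from the two-sided distributive law of $*$; and finally verify the stated $\LL{}$-, $\MM{}$-, or $\RR{}$-linearity by direct substitution, after which uniqueness comes for free. It is worth noting at the outset that each of the three maps uses all three of the left/mid/right linearities of the $\LMR$-bimap: one linearity to make its target well-defined, the other two for the linearity clause.

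For $\cev{*}\colon U\to W\rversor_{\RR{}}V=\hom(V_{\RR{}},W_{\RR{}})$, fix $u$. The map $v\mapsto u*v$ is additive by distributivity in the second variable and satisfies $(v\rho)\mapsto u*(v\rho)=(u*v)\rho$ by right $\RR{}$-linearity, hence is a morphism of right $\RR{}$-modules, so $\cev{*}(u)$ really lies in $W\rversor_{\RR{}}V$; additivity of $u\mapsto\cev{*}(u)$ is distributivity in the first variable, and since the formula leaves no freedom this is the unique such map. For linearity, $\cev{*}(\lambda u)$ sends $v$ to $(\lambda u)*v=\lambda(u*v)$ by left $\LL{}$-linearity and $\cev{*}(u\mu)$ sends $v$ to $(u\mu)*v=u*(\mu v)$ by mid $\MM{}$-linearity; reading these two identities through the post- and pre-composition actions induced on $\hom(V_{\RR{}},W_{\RR{}})$ gives exactly $(\LL{},\MM{}^{\circ})$-linearity. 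The map $\vec{*}$ is the mirror image of $\cev{*}$ under the swap $\tilde{*}$ of Section~\ref{sec:prelims} (equivalently, run the same argument directly with the two variables, and the roles of $\LL{}$ and $\RR{}$, interchanged), so the same three steps apply \emph{mutatis mutandis} and yield $(\MM{}^{\circ},\RR{})$-linearity.

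For $\hat{*}\colon U\otimes_{\MM{}}V\to W$, the two-sided distributive law makes $*$ biadditive and mid $\MM{}$-linearity $(u\mu)*v=u*(\mu v)$ makes it $\MM{}$-balanced for the right $\MM{}$-action $u\mu$ on $U$ and the left $\MM{}$-action $\mu v$ on $V$; the universal property of the Whitney tensor product then furnishes a unique additive $\hat{*}$ with $\hat{*}(u\otimes v)=u*v$, and this same universal property supplies the asserted uniqueness, since simple tensors generate $U\otimes_{\MM{}}V$. Viewing $U\otimes_{\MM{}}V$ as an $(\LL{},\RR{})$-bimodule via the left $\LL{}$-action on $U$ and the right $\RR{}$-action on $V$ and evaluating on simple tensors, $\hat{*}(\lambda(u\otimes v))=(\lambda u)*v=\lambda(u*v)$ and $\hat{*}((u\otimes v)\rho)=u*(v\rho)=(u*v)\rho$ by left $\LL{}$- and right $\RR{}$-linearity, so $\hat{*}$ is $(\LL{},\RR{})$-linear.

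I do not expect a genuine obstacle here, as the proposition is essentially bookkeeping against carefully arranged definitions. The one thing to watch --- and Remark~\ref{rem:notation} flags precisely this --- is the handedness of the actions: because $\MM{}$ acts on the \emph{right} of $U$ but on the \emph{left} of $V$, transporting its action along $\cev{*}$ or $\vec{*}$ forces the op-ring $\MM{}^{\circ}$ to appear, and the entire ``furthermore'' clause is exactly the assertion that these op-superscripts have been tracked correctly. To keep the write-up short one proves the $\cev{*}$ case in detail, obtains the $\vec{*}$ case from the swap, and settles $\hat{*}$ by quoting the universal property of $\otimes_{\MM{}}$.
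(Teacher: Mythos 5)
Your proof is correct, and it is essentially the verification the paper intends: the proposition is stated without proof as routine bookkeeping, and your three recurring steps (membership of $\cev{*}(u)$ and $\vec{*}(v)$ in the relevant $\hom$-sets via right $\RR{}$-, resp.\ left $\LL{}$-, linearity; well-definedness of $\hat{*}$ via $\MM{}$-balancedness and the universal property of $\otimes_{\MM{}}$; the linearity clause by substituting $\lambda u$, $u\mu$, $v\rho$ and tracking pre- versus post-composition, which is where $\MM{}^{\circ}$ enters) are exactly that bookkeeping. The one subtlety, inherent to the statement rather than a defect of your argument, is that the module structures you invoke on the targets and on $U\otimes_{\MM{}}V$ (e.g.\ $\lambda(u\otimes v)=(\lambda u)\otimes v$ descending to the tensor product) presuppose the bimodule identities such as $(\lambda u)\mu=\lambda(u\mu)$, which the paper's later 3-pile-shuffle discussion guarantees only modulo the radicals unless $*$ is fully nondegenerate.
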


A sloppy but convenient practice will be to also write $\cev{*}$ for the canonical homotopism 
$(\cev{*},1_V;1_W)\in \hom(*,\rversor_{\RR{}})$. Likewise overload $\hat{*}$ with 
$(1_U,1_V;\hat{*})\in \hom(\otimes_{\MM{}},*)$ and 
$\vec{*}$ with $(1_U,\vec{*};1_W)\in \hom(*,{_{\LL{}}\lversor})$.  

\begin{thm}[Universality of scalar rings]\label{thm:universal}
Every bimap $*:U\times V\bmto W$ is an $\LMR_*$-bimap.  Furthermore, if $*$ is also an $\LMR$-bimap, then the image of
the representation $\LL{}\to \End(U_{\mathbb{Z}})\times\End(W_{\mathbb{Z}})$ lies in $\LL{*}$ and
$\vec{*}:V\to U\;{_{\LL{}}\lversor}\;W$ factors through $U\;{_{\LL{*}}\lversor}\; W$.  Similarly,
$\MM{}\to \MM{*}$ and $\RR{}\to \RR{*}$; and $\hat{*}$ and $\cev{*}$ factor through $U\otimes_{\MM{*}} V$ and
$W\rversor_{\RR{*}} V$ respectively; see Figure~\ref{fig:scalar-diagram}.
\end{thm}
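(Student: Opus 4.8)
The plan is to verify universality in two halves: first that every bimap is an $\LMR_*$-bimap, which is immediate from the definitions of $\LL{*}$, $\MM{*}$, and $\RR{*}$ as the full sets of operators satisfying the relevant associativity identity; this is essentially tautological once one observes that the three defining conditions are closed under addition and composition, so $\LL{*}$, $\MM{*}$, $\RR{*}$ are genuinely subrings of the respective endomorphism rings and $*$ is by construction linear over each. The substantive content is the universal property: given an abstract $\LMR$-bimap structure, i.e. ring maps $\LL{}\to \End(U_{\mathbb{Z}})\times\End(W_{\mathbb{Z}})$ etc.\ making $*$ left/mid/right linear, one must show these maps factor through the canonical rings.

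For the factorization through $\LL{*}$: take $\lambda\in\LL{}$ with image $(L_\lambda^U,L_\lambda^W)$. The hypothesis that $*$ is left $\LL{}$-linear says exactly $(L_\lambda^U u)*v = L_\lambda^W(u*v)$ for all $u,v$, which is precisely the membership condition defining $\LL{*}\subseteq \End(U_{\mathbb{Z}})\times\End(W_{\mathbb{Z}})$. Hence the image of $\lambda$ lies in $\LL{*}$, giving a ring homomorphism $\LL{}\to\LL{*}$ compatible with the actions. The arguments for $\MM{}\to\MM{*}$ and $\RR{}\to\RR{*}$ are the same verification with the indices and left/right conventions permuted, which is why Remark~\ref{rem:notation} is invoked — the bookkeeping of which endomorphism ring ($\End(_kU)$ versus $\End(U_k)$) each operator lands in is the only place an error can creep in.

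For the statement that $\vec{*}:V\to U\,{_{\LL{}}\lversor}\,W$ factors through $U\,{_{\LL{*}}\lversor}\,W$: by definition $\vec{*}$ sends $v$ to the map $u\mapsto u*v$, and I must check this map is $\LL{*}$-linear, i.e. lies in $\hom(_{\LL{*}}U,{_{\LL{*}}}W)$, not merely in $\hom(_{\LL{}}U,{_{\LL{}}}W)$. But for $\lambda\in\LL{*}$ the identity $(\lambda u)*v=\lambda(u*v)$ holds by the very definition of $\LL{*}$, so $u\mapsto u*v$ commutes with the $\LL{*}$-action; since $\LL{}\to\LL{*}$, we get a factorization $U\,{_{\LL{}}\lversor}\,W \twoheadleftarrow$ — more precisely the inclusion-induced map — through $U\,{_{\LL{*}}\lversor}\,W$, and $\vec{*}$ lands in the latter. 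Symmetrically $\cev{*}$ lands in $W\rversor_{\RR{*}}V$ because $\RR{*}$ is exactly the ring over which $v\mapsto u*v$ need not be checked but $u\mapsto u*v$ is right-$\RR{*}$-linear; and $\hat{*}:U\otimes_{\MM{}}V\to W$ factors through $U\otimes_{\MM{*}}V$ because the defining relation $(u\mu)*v=u*(\mu v)$ of $\MM{*}$ says precisely that $\hat{*}$ kills the extra relations imposed when one enlarges the balancing ring from $\MM{}$ to $\MM{*}$, so $\hat{*}$ descends along $U\otimes_{\MM{}}V\onto U\otimes_{\MM{*}}V$. Assembling these factorizations is exactly the commuting diagram of Figure~\ref{fig:scalar-diagram}.

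The main obstacle is not mathematical depth but notational discipline: every clause involves an operator acting on one side while scalars act on the other, and the rings $\LL{*},\MM{*},\RR{*}$ are defined as subrings of \emph{different} endomorphism rings ($\End(U_{\mathbb{Z}})\times\End(W_{\mathbb{Z}})$ for $\LL{*}$, $\End(_{\mathbb{Z}}U)\times\End(V_{\mathbb{Z}})$ for $\MM{*}$ with its built-in op-twist, and so on), so one must carry the left/right conventions of Section~\ref{sec:prelims} through each verification without conflating $R^U_\phi$ with $L^U_\phi$. I would set up the notation once, do the $\LL{}$ case in full as a template, and then assert the $\MM{}$ and $\RR{}$ cases follow by the symmetry of swapping and op-ing, pointing to the proposition just before this theorem for the linearity bookkeeping of $\cev{*},\hat{*},\vec{*}$.
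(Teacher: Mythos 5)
Your proof is correct and is exactly the routine definitional verification the paper intends (the paper in fact states Theorem~\ref{thm:universal} without proof, treating it as immediate from the definitions of $\LL{*}$, $\MM{*}$, $\RR{*}$ and the universal properties of $\otimes$ and the versor products). The only blemish is the garbled sentence about $\cev{*}$ --- what is needed there is simply that $v\mapsto u*v$ lies in $\hom(V_{\RR{*}},W_{\RR{*}})$ because $u*(v\rho)=(u*v)\rho$ for all $\rho\in\RR{*}$, mirroring your $\vec{*}$ argument --- but the mathematics is right.
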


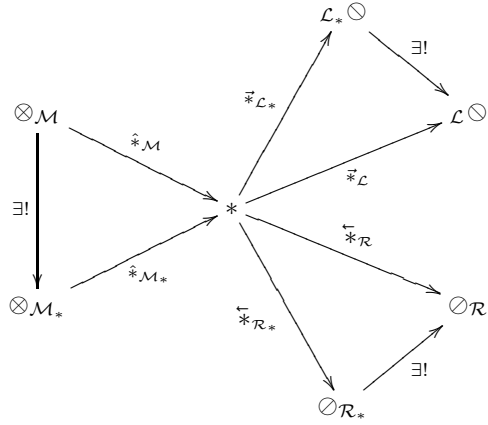
\begin{figure}
\begin{align*}
\xymatrix{
&	& & {_{\LL{*}}\lversor}\ar[dr]^{\exists !} & \\
\otimes_{\MM{}} \ar[rrd]^{\hat{*}_{\MM{}}} \ar[dd]_{\exists !} & & &  & {_{\LL{}}\lversor}\\
 &    & {*} \ar[ruu]^{\vec{*}_{\LL{*}}}\ar[rdd]_{\cev{*}_{\RR{*}}}\ar[urr]_{\vec{*}_{\LL{}}}\ar[drr]^{\cev{*}_{\RR{}}}\\
\otimes_{\MM{*}}\ar[rru]_{\hat{*}_{\MM{*}}} & &  &  & \rversor_{\RR{}}\\
& 	& & \rversor_{\RR{*}} \ar[ur]_{\exists !} & 
}
\end{align*}
\caption{Commutative diagram in the homotopism category concerning the universality of scalars.}\label{fig:scalar-diagram}
\end{figure}

\subsection{Centroids, bimodules, and 3-pile-shuffles}
We introduce a method of proof so common to bimaps that we give it a name: {\em 3-pile-shuffling}.  
We introduce this by demonstrating
with the the fourth form of linearity in a bimap: {\em bi-}linearity.  Here we have a commutative ring $k$ represented
in $\End(U)\times \End(V)\times \End(W)$ such that 
\begin{align*}
	(\forall & s\in k, \forall u\in U, \forall v\in V) & (su)*v & = s(u*v) = u*(sv).
\end{align*}
We should like to observe that every $k$-bimap is also a $\CC{*}$-bimap and that there is canonical ring
homomorphism $k\to \CC{*}$ compatible with these two interpretations.  Of course this is true, but a technical point
stands in the way which is that we have defined $k$-bilinearity as acting on the left of $U$, $V$, and $W$, and our
convention to ``act opposite scalars'' made our default definition of $\CC{*}$ represent a right module action.
The following shuffling of the letters between the three groups $U$, $V$, and $W$ resolves the problem.
\begin{align*}
	(\forall & s,t\in k,\forall u\in U,\forall v\in V) &
	((st)u)*v & = s(u*(tv)) = ((ts)u)*v
\end{align*}
which implies that $(st-ts)U\in V^{\bot}$.  Indeed, as long as $*$ is fully nondegenerate then the commutator $[k,k]$ is in the annihilator of
each of the groups $U$, $V$, and $W$, and so we harmlessly insist that $k$ is commutative; compare \cite{Myasnikov}. So in bilinearity
the distinction between left and right is inconsequential.
\smallskip

Though imprecise, we will regard the general process of moving scalars between the various groups $U$, $V$ and $W$
of a bimap $*:U\times V\bmto W$ as ``3-pile-shuffles''.
\smallskip

Using 3-pile-shuffles one further concludes that the $\CC{*}$-action commutes with that of $\LL{*}$, $\MM{*}$, and $\RR{*}$.
Further still, ${_{\LL{*}} U}_{\MM{*}}$ is essentially a bimodule in the sense that 
\begin{align*}
	(\lambda u)\mu - \lambda (u\mu) & \in V^{\bot}.
\end{align*}
Similar claims hold for  ${_{\MM{*}} V}_{\RR{*}}$ and ${_{\LL{*}} W}_{\RR{*}}$.   In particular, if $*$ is fully nondegenerate then
everyone of these is an honest bimodule.  We have proved:

\begin{prop}
For an $\LMR$-bimap $*:U\times V\bmto W$, for all $u\in U$, all $v\in V$, and all 
 $(\lambda,\mu,\rho)\in \LMR$,
\begin{align*}
	(\lambda u)\mu * v & = \lambda (u\mu)* v &
	(\lambda u* v)\rho & =  \lambda(u* v\rho) &
	u* (\mu v)\rho & = u* \mu (v\rho).
\end{align*}
If $*$ is fully nondegenerate then $U$ is an $(\LL{},\MM{})$-bimodule, 
$V$ is an $(\MM{},\RR{})$-bimodule, and $W$ is an $(\LL{},\RR{})$-bimodule.
\end{prop}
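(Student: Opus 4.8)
The plan is to obtain each of the three displayed identities by a single ``3-pile-shuffle'': begin on one side, and apply in turn whichever of left-, mid-, or right-$\LMR$-linearity moves the scalar currently ``in the way'' one pile over, until both sides have been rewritten in the same normal form. Concretely, for the first identity mid-linearity gives $(\lambda u)\mu * v = (\lambda u)*(\mu v)$ and then left-linearity gives $\lambda\bigl(u*(\mu v)\bigr)$; applying the same two rules to $\lambda(u\mu)*v$ in the opposite order (left, then mid) again yields $\lambda\bigl(u*(\mu v)\bigr)$, so the two sides coincide. The second identity is shorter: $(\lambda u*v)\rho = (\lambda u)*(v\rho) = \lambda\bigl(u*(v\rho)\bigr)$ by right- then left-linearity, which is exactly $\lambda(u*v\rho)$. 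The third is the mirror of the first: $u*\bigl((\mu v)\rho\bigr) = \bigl(u*(\mu v)\bigr)\rho = \bigl((u\mu)*v\bigr)\rho = (u\mu)*(v\rho) = u*\bigl(\mu(v\rho)\bigr)$, applying right-, mid-, right-, and mid-linearity in succession. Nothing here uses more than the three defining linearities, so the steps are pure bookkeeping; the only real hazard, as warned in \remref{rem:notation}, is keeping the left/right side of each action straight.

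For the bimodule assertions I would feed full nondegeneracy into each identity. The representations $\LL{}\to\End(U_{\mathbb{Z}})\times\End(W_{\mathbb{Z}})$ and $\MM{}\to\End({_{\mathbb{Z}}U})\times\End(V_{\mathbb{Z}})$ already equip $U$ with a left $\LL{}$-module and a right $\MM{}$-module structure, so the content is only that the two actions commute, i.e. $(\lambda u)\mu=\lambda(u\mu)$ in $U$. The first identity, together with additivity of $*$ in its first slot, gives $\bigl((\lambda u)\mu-\lambda(u\mu)\bigr)*v=0$ for every $v\in V$, so this element lies in $V^{\top}$, which is trivial by hypothesis; thus $U$ is an $(\LL{},\MM{})$-bimodule. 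Symmetrically, the third identity puts $(\mu v)\rho-\mu(v\rho)$ in $U^{\bot}=0$, so $V$ is an $(\MM{},\RR{})$-bimodule.

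The $W$-case needs one additional move and is, I expect, the only point requiring a second's thought rather than reflex. After the reduction above the second identity reads $\bigl(\lambda(u*v)\bigr)\rho=\lambda\bigl((u*v)\rho\bigr)$, i.e. the $\LL{}$- and $\RR{}$-actions commute on each pure product $u*v$. Since $*$ is fully nondegenerate we have $W^{+}=0$, so $W=\langle u*v:u\in U,\,v\in V\rangle$; as both actions are additive, the relation $(\lambda w)\rho=\lambda(w\rho)$ extends from these generators to all of $W$. Combined with the module structures already at hand, this exhibits $W$ as an $(\LL{},\RR{})$-bimodule and finishes the proof.
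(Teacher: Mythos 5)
Your proof is correct and follows essentially the same route as the paper, which establishes these identities by exactly this kind of 3-pile-shuffle and then invokes full nondegeneracy to place the discrepancies in $V^{\top}$ and $U^{\bot}$. If anything you are slightly more careful than the text: you note explicitly that the $W$-case needs $W^{+}=0$ to extend the commutation relation from the pure products $u*v$ to all of $W$, a point the paper leaves implicit.
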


%==================================================================================================
%
%==================================================================================================
\section{The structure of derivations of bimaps}\label{sec:der}
We prove structure theorems for the Lie algebra of derivations of a bimap.  These are slightly simpler 
demonstrations of the structure found in autotopism groups.
Throughout we fix a fully-nondegenerate $k$-bimap $*:U\times V\bmto W$. Define the following Lie algebra of
{\em derivations}
\begin{align*}
	\Der_k(*) & = \{ \delta\in \gl(_k U)\oplus \gl(_k V)\oplus \gl(_k W) : \forall u\forall v, (u\delta)*v+u*(v\delta)= u*^{\delta} v\}.
\end{align*}
Derivations in this general form have been developed for use in studying algebras.  For instance they are essential
in describing Cartan-Jacbson principal of triality (\cite{Schafer}*{Theorem 3.31}) and are also used in the study of
solvable and nilpotent Lie algebras; see \cite{LL:gen-der} and accompanying citations.  The adaptation to bimaps appears in
\cite{Wilson:alpha}*{Section 4.2}, but quite possibly it arose long before that work.

It will be helpful in this section to have understood the notation in Remark~\ref{rem:notation}.
\medskip

So far $\Der_k(*)$ has no direct awareness of the universally described scalar rings $\LL{*}$, $\MM{*}$, or $\RR{*}$
we have seen above.  Indeed, derivation rings are not part of any known functorial connection and can vary quite wildly with small changes in
a bimap.  So the inclusion of scalars is, at least on the surface, an artificial imposition as follows:
\begin{align*}
	\Der_{\LMR}(*) 
	& = \{\delta \in \gl({_{\LL{*}} U}_{\MM{*}})\times \gl({_{\MM{*}} V}_{\RR{*}}) \times \gl({_{\LL{*}} W}_{\RR{*}}) :\\
		& \qquad \forall u\forall v,		 (u\delta)*v+u*(v\delta) = u*^{\delta} v\}.
\end{align*}
Our principle concern in this section is to demonstrate that we can largely predict the structure $\Der_k(*)$  from $\Der_{\LMR}(*)$ and
an understanding of the associative unital rings $\LL{*}$, $\MM{*}$, and $\RR{*}$.  
\medskip

As above, for  (non-)associative rings $A$ let $\Der_k A$ denote the derivations of the product of $A$,
$\ad A=\ad A^{-}$ the ``inner'' derivations, and $\Der_k A/\ad A$ we consider the ``outer'' derivations.
We prove:

\begin{thm}\label{thm:der-2}
Fix a fully nondegenerate $k$-bimap $*:U\times V\bmto W$.  
There is an exact sequence of Lie $k$-algebras
\begin{align*}
	0\to Z(\LMR_*)^- \to \LMR_{*}^{-}\oplus \Der_{\LMR}(*)\longrightarrow \Der_k(*)
		\longrightarrow \frac{\Der_k\LMR_*}{\ad \LMR_*}.
\end{align*}
\end{thm}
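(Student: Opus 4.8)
The plan is to build the exact sequence one arrow at a time, treating $\Der_k(*)$ as the ambient object and exhibiting both a natural map \emph{into} it and a natural map \emph{out of} it. First I would construct the map $\LMR_*^-\oplus\Der_{\LMR}(*)\to\Der_k(*)$. The summand $\Der_{\LMR}(*)$ maps in by the tautological inclusion, since an $\LMR$-linear derivation is in particular a $k$-linear one. The summand $\LMR_*^-$ maps in by sending $(\lambda,\mu,\rho)\in\LL{*}\oplus\MM{*}\oplus\RR{*}$ to the triple $(L_\lambda^U-R_\mu^U,\ L_\mu^V-R_\rho^V,\ L_\lambda^W-R_\rho^W)$ acting on $U$, $V$, $W$ respectively (signs/placements to be pinned down using Remark~\ref{rem:notation}); the point is that the defining identities for $\LL{*}$, $\MM{*}$, $\RR{*}$ force $(u\delta)*v+u*(v\delta)=0=u*^\delta v$ for such $\delta$ when the pieces are assembled correctly, i.e.\ each $(\lambda,\mu,\rho)$ gives an ``inner'' derivation, exactly parallel to $\ad$ of a scalar element. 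One checks this is a Lie homomorphism on each summand and that the two images commute (again a 3-pile-shuffle using the Proposition that $\CC{*}$ and $\LMR_*$ actions intertwine), so the direct sum of Lie algebras maps in.

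Next I would identify the kernel of $\LMR_*^-\oplus\Der_{\LMR}(*)\to\Der_k(*)$. An element $(\delta_0;(\lambda,\mu,\rho))$ dies iff the inner derivation attached to $(\lambda,\mu,\rho)$ equals $-\delta_0$ as an operator triple. But $\delta_0$ is $\LMR$-linear, whereas the inner derivation of $(\lambda,\mu,\rho)$ is $\LMR$-linear precisely when $(\lambda,\mu,\rho)$ is central in $\LMR_*$ — this is the standard fact that $\ad x$ commutes with all of $\End$-module structure iff $x$ is central, transported through the bimodule identities. Hence the kernel is $\{(-\iota(z);z):z\in Z(\LMR_*)\}\cong Z(\LMR_*)^-$, giving exactness at $\LMR_*^-\oplus\Der_{\LMR}(*)$ and injectivity of $Z(\LMR_*)^-$ into it.

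Then I would construct the connecting map $\Der_k(*)\to\Der_k\LMR_*/\ad\LMR_*$. Given $\delta=(\delta_U,\delta_V,\delta_W)\in\Der_k(*)$, I would show $\delta$ induces a derivation of each of $\LL{*}$, $\MM{*}$, $\RR{*}$ by the formula $\lambda\mapsto[\delta,\lambda]$ — more precisely, for $\lambda=(L_\lambda^U,L_\lambda^W)\in\LL{*}$ the commutator $(\delta_U L_\lambda^U-L_\lambda^U\delta_U,\ \delta_W L_\lambda^W-L_\lambda^W\delta_W)$ again satisfies the left-scalar identity; this is verified by differentiating $(\lambda u)*v=\lambda(u*v)$ along $\delta$, a routine 3-pile-shuffle. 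So $\delta$ acts as a derivation of $\LMR_*=\LL{*}\oplus\MM{*}\oplus\RR{*}$. Composing with the quotient $\Der_k\LMR_*\to\Der_k\LMR_*/\ad\LMR_*$ gives the desired map; it is a Lie homomorphism because conjugation respects brackets. Finally, exactness at $\Der_k(*)$: a $\delta$ maps to $0$ iff its action on $\LMR_*$ is $\ad$ of some $(\lambda,\mu,\rho)\in\LMR_*$, i.e.\ $\delta$ and the inner derivation of $(\lambda,\mu,\rho)$ act identically on all of $\LMR_*$; subtracting, $\delta-\iota(\lambda,\mu,\rho)$ centralizes $\LMR_*$, hence is $\LMR$-linear, hence lies in $\Der_{\LMR}(*)$ — so $\delta$ is in the image of $\LMR_*^-\oplus\Der_{\LMR}(*)$, and the reverse inclusion (inner derivations go to $0$, $\LMR$-linear ones commute with $\LMR_*$) is immediate.

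The main obstacle I anticipate is bookkeeping rather than conceptual: getting every left/right action placed correctly so that the operator identities genuinely close up — in particular verifying that $\delta$ acts as a derivation on the \emph{mid} scalars $\MM{*}$ (where $U$ and $V$ play asymmetric roles and one must use $\delta_U$ on one side and $\delta_V$ on the other), and confirming that ``centralizes $\LMR_*$'' is equivalent to ``$\LMR$-linear'' for a derivation triple. Both reduce to the bimodule identities of the Proposition in Section~\ref{sec:rings} together with full nondegeneracy (to cancel the $V^\bot$, $U^\top$, $W^+$ error terms), so the argument is a disciplined sequence of 3-pile-shuffles; the exactness at $\Der_k(*)$ is where all the pieces must be simultaneously consistent and is the step I would write out most carefully.
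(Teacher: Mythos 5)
Your proposal is correct and follows essentially the same route as the paper: the paper packages your ``inner derivation'' map as $\Delta:\LMR_*^-\to\Der_k(*)$, the induced action on $\LMR_*$ as $\Xi:\Der_k(*)\to\Der_k(\LMR_*)$, and the key facts $\ker\Xi=\Der_{\LMR}(*)$, $\Delta\Xi=\ad$, $\ker\Delta\Xi=Z(\LMR_*)$ as separate lemmas before assembling the sequence exactly as you do. One small slip worth fixing: the displayed identity $(u\delta)*v+u*(v\delta)=0=u*^{\delta}v$ should read $(u\delta)*v+u*(v\delta)=u*^{\delta}v$, where $\delta$ acts on $W$ by $L_\lambda^W-R_\rho^W$ (generally nonzero); your own operator triple already records this $W$-component correctly, so only the cross terms $u\mu*v-u*\mu v$ cancel, not the whole expression.
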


First we pause to recognize that every $k$-bimap is also a $\mathbb{Z}$-bimap and a $\CC{*}$-bimap, so $k$ is
not canonical.  Yet different choices of $k$ can be reconciled as follows.

\begin{prop}
There is an exact sequence of Lie $k$-algebras
\begin{align*}
	0\to \Der_{\CC{*}}(*)\to \Der_k(*)\to \Der_k(\CC{*}).
\end{align*}
\end{prop}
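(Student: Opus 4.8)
The plan is to exhibit the map $\Der_k(*)\to\Der_k(\CC{*})$ as an adjoint (commutator) action of a derivation on the centroid, and to identify its kernel with $\Der_{\CC{*}}(*)$; the inclusion $\Der_{\CC{*}}(*)\hookrightarrow\Der_k(*)$ is injective for the trivial reason that it is the inclusion of a Lie subalgebra (elements of $\Der_k(*)$ whose three components happen to be $\CC{*}$-linear).

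First I would define, for $\delta=(\delta_U,\delta_V,\delta_W)\in\Der_k(*)$, the map $\partial_\delta\colon\sigma\mapsto\delta\sigma-\sigma\delta$ computed componentwise inside $\End(U_{\mathbb Z})\times\End(V_{\mathbb Z})\times\End(W_{\mathbb Z})$; here, since operators act opposite to scalars, the product $\delta\sigma$ means ``apply $\delta$, then $\sigma$''. The one genuinely substantive point is that $\partial_\delta$ carries $\CC{*}$ into $\CC{*}$, and this is a 3-pile-shuffle. For $\sigma\in\CC{*}$ one expands $\bigl(u(\delta_U\sigma_U-\sigma_U\delta_U)\bigr)*v$, uses the centroid identities $(u\sigma_U)*v=(u*v)\sigma_W=u*(v\sigma_V)$ to push $\sigma$ across $*$, and uses the derivation identity $(x\delta_U)*v=(x*v)\delta_W-x*(v\delta_V)$ to push $\delta$ across $*$. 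The two ``mixed'' terms containing $v\delta_V$ cancel and one is left with $(u*v)(\delta_W\sigma_W-\sigma_W\delta_W)$, and symmetrically $u*\bigl(v(\delta_V\sigma_V-\sigma_V\delta_V)\bigr)=(u*v)(\delta_W\sigma_W-\sigma_W\delta_W)$; hence the triple $(\delta_U\sigma_U-\sigma_U\delta_U,\ \delta_V\sigma_V-\sigma_V\delta_V,\ \delta_W\sigma_W-\sigma_W\delta_W)$ satisfies the two defining identities of $\CC{*}$. This verification is the main obstacle, although it is routine once the identities are lined up correctly (and, as it happens, it does not even use full nondegeneracy).

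Next I would observe that $\partial_\delta$ restricted to $\CC{*}$ is a $k$-linear derivation of the ring $\CC{*}$: the Leibniz rule is inherited from the fact that $\sigma\mapsto\delta\sigma-\sigma\delta$ is a derivation of the ambient associative algebra $\End(U_{\mathbb Z})\times\End(V_{\mathbb Z})\times\End(W_{\mathbb Z})$ and, by the previous step, it preserves the subalgebra $\CC{*}$; $k$-linearity follows because $\delta$ commutes with scalars from $k$ (that is precisely $k$-linearity of $\delta$ on $U$, $V$, $W$), so $[\delta,\bar s]=0$ for the image $\bar s\in\CC{*}$ of $s\in k$. That $\delta\mapsto\partial_\delta$ is a homomorphism of Lie $k$-algebras $\Der_k(*)\to\Der_k(\CC{*})$ is then the Jacobi identity: $\partial_{[\delta,\varepsilon]}=\ad_\delta\,\ad_\varepsilon-\ad_\varepsilon\,\ad_\delta$ as operators on $\CC{*}$, using that $\Der_k(*)$ is closed under bracket.

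Finally, $\delta$ lies in the kernel of $\delta\mapsto\partial_\delta$ exactly when $\delta\sigma=\sigma\delta$ for every $\sigma\in\CC{*}$, i.e. when each of $\delta_U,\delta_V,\delta_W$ is $\CC{*}$-linear; by definition this says $\delta\in\Der_{\CC{*}}(*)$. Therefore the sequence $0\to\Der_{\CC{*}}(*)\to\Der_k(*)\to\Der_k(\CC{*})$ is exact, as claimed.
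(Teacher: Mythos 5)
Your proposal is correct and follows essentially the same route as the paper: both define the componentwise commutator of $\delta\in\Der_k(*)$ with $\sigma\in\CC{*}$, verify via the centroid and derivation identities (with the mixed terms cancelling) that it lands back in $\CC{*}$, and read off the map to $\Der_k(\CC{*})$ with kernel $\Der_{\CC{*}}(*)$. You in fact spell out the Leibniz rule, the Lie-homomorphism property, and the kernel identification more explicitly than the paper does, but this is elaboration, not a different argument.
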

\begin{proof}
Let $\sigma\in \CC{*}$ and $\delta\in \Der_k(*)$.  Define
\begin{align*}
	[\sigma,\delta] & = ([R_{\sigma}^U,R_\delta^U], [R_\sigma^V,R_\delta^V]; [R_\sigma^W, R_\delta^W]).
\end{align*}
Observe that $[\sigma,\delta]\in \CC{*}$ because
\begin{align*}
	u[R_{\sigma}^U,R_\delta^U] * v 
		& = (u\sigma*v)\delta-(u\sigma)*(v\delta)-(u*v\sigma)\delta+u*(v\sigma\delta) = u*(v[R_{\sigma}^W,R_{\delta}^V]),\\
	u[R_{\sigma}^U,R_\delta^U] * v 	
%		& = u\sigma\delta*v-u\delta \sigma * v\\
%		& = (u\sigma*v)\delta	-(u\sigma)*(v\delta) -(u\delta*v)\sigma\\
		& = (u\sigma*v)\delta	-(u\sigma)*(v\delta) -(u*v)\delta\sigma + u*(v\delta\sigma)
		= (u*v)[R_\sigma^W,R_\delta^W].
\end{align*}
Hence, $[\sigma,\delta]\in \CC{*}$ and $[-,\delta]$ is a derivation of $\CC{*}$ (as an associative ring).
\end{proof}

Now we prove the main claim of this section in stages.

\begin{prop}\label{prop:der-1}
There is an exact sequence of Lie $k$-algebras
\begin{align*}
	0\to \CC{*}^{-} \overset{\chi}{\longrightarrow} \LMR_*^{-}\overset{\Delta}{\longrightarrow} \Der_k(*).
\end{align*}
Furthermore, the restriction of $\Delta$ induces the following short exact sequences.
\begin{align*}
	0 & \to \LL{*}^{-} \overset{\Delta}{\longrightarrow} \Der_k(*)\to \Der_k(*)|^V\to 0\\
	0 & \to \MM{*}^{-} \overset{\Delta}{\longrightarrow} \Der_k(*)\to \Der_k(*)|^W\to 0\\
	0 & \to \RR{*}^{-} \overset{\Delta}{\longrightarrow} \Der_k(*)\to \Der_k(*)|^U\to 0
\end{align*}
In particular the image $\ad(*)$ of $\Delta$ is an ideal of $\Der_k(*)$ and $\Der_k(*)$ 
contains a faithful copy of the abelian Lie $k$-algebra.
$\CC{*}^{-}\oplus \CC{*}^{-}\neq 0$.
\end{prop}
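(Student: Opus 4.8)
The plan is to construct the maps $\chi$ and $\Delta$ explicitly and then verify exactness and the short exact sequences by the kind of systematic definition-chasing flagged in Remark~\ref{rem:notation}. First I would define $\Delta\colon \LMR_*^-\to\Der_k(*)$ on each summand. For $\lambda=(L^U_\lambda,L^U_\lambda)\in\LL{*}$, the natural candidate is $\lambda\Delta=(L^U_\lambda\oplus 0\oplus L^W_\lambda)$ acting on $U\oplus V\oplus W$ (so $\lambda$ acts as left multiplication on $U$ and $W$, and trivially on $V$), and symmetrically $\mu=(R^U_\mu,L^V_\mu)\in\MM{*}$ maps to $(R^U_\mu\oplus L^V_\mu\oplus 0)$ and $\rho=(R^V_\rho,R^W_\rho)\in\RR{*}$ maps to $(0\oplus R^V_\rho\oplus R^W_\rho)$; then $\Delta$ is the sum of these three. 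One checks $\lambda\Delta\in\Der_k(*)$ directly: $((\lambda u)*v+u*0=\lambda(u*v))$ is exactly the left-$\LL{*}$-linearity, so the derivation identity with $u*^{\delta}v := \lambda(u*v)$ holds. The analogous one-line checks handle $\MM{*}$ and $\RR{*}$, and additivity of $\Delta$ on $\LMR_*=\LL{*}\oplus\MM{*}\oplus\RR{*}$ is immediate. That $\Delta$ is a Lie homomorphism $\LMR_*^-\to\Der_k(*)$ is again a bracket-by-bracket computation; the cross terms (e.g. $[\lambda\Delta,\mu\Delta]$) vanish on the appropriate component or land correctly because $\LL{*}$, $\MM{*}$, $\RR{*}$ act on disjoint pairs of the three modules, and within a single summand $\Delta$ is (anti-)multiplicative in the right way — here one must be careful that $\LMR_*^-$ uses the ring structure of $\LMR_*$, and the sign conventions from the anti-isomorphism $R^U_\alpha\mapsto L^U_\alpha$ must be tracked.

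Next I would identify the kernel of $\Delta$. An element $(\lambda,\mu,\rho)$ is killed by $\Delta$ iff $L^U_\lambda+R^U_\mu=0$ on $U$, $L^V_\mu+R^V_\rho=0$ on $V$, and $L^W_\lambda+R^W_\rho=0$ on $W$ — more precisely iff the summed operator on each of $U$, $V$, $W$ is zero. Rewriting, $\lambda$ acts on $U$ as $-\mu$ acts on $U$, etc., and a 3-pile-shuffle argument (exactly the technique of Section~\ref{sec:rings}) shows this forces the common operator to lie in the centroid $\CC{*}$: the element $\sigma$ with $u\sigma := u R^U_\mu = -\lambda u$, $v\sigma := v R^V_\rho = -\mu v$ (reading $\mu$ on the left of $V$), $w\sigma := wR^W_\rho = -\lambda w$ satisfies $(u\sigma)*v=(u*v)\sigma=u*(v\sigma)$ by the $\LMR$-bimodule relations of the preceding Proposition, using full nondegeneracy to convert the "$\in V^\bot$"-type congruences into equalities. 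Conversely each $\sigma\in\CC{*}$ gives such a triple; this defines $\chi\colon\CC{*}^-\hookrightarrow\LMR_*^-$ and identifies $\ker\Delta=\im\chi$. The injectivity of $\chi$ is clear since $\sigma$ is determined by its action, and $\chi$ is a Lie (indeed associative) embedding.

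For the three short exact sequences, note $\Der_k(*)|^V$ denotes the image of $\Der_k(*)$ under restriction to the $V$-component (i.e. $\delta\mapsto R^V_\delta$), and similarly for $|^W$, $|^U$. The content of the first sequence is: a derivation $\delta$ has $R^V_\delta=0$ iff $\delta=\lambda\Delta$ for a unique $\lambda\in\LL{*}^-$. One direction is the computation above. For the other, if $R^V_\delta=0$ then the derivation identity reads $(u\delta)*v = u*^{\delta}v - u*(v\delta) = u*^{\delta}v$; one must argue $R^W_\delta$ is then "left multiplication by $R^U_\delta$" in the sense required for $\LL{*}$, i.e. that $(u R^U_\delta)*v = (u*v)R^W_\delta$ for all $u,v$ — this says precisely $(R^U_\delta, R^W_\delta)\in\LL{*}$ as an operator acting on the left, after translating via the anti-isomorphism, and uniqueness again uses nondegeneracy. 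The other two sequences are obtained by the swap symmetry $*\mapsto\tilde*$ and the evident cyclic symmetry among $(\LL{*},\MM{*},\RR{*})$ and $(U,V,W)$. Finally, $\ad(*):=\im\Delta$ is an ideal of $\Der_k(*)$: for $\delta\in\Der_k(*)$ and $\lambda\in\LL{*}$ one computes $[\delta,\lambda\Delta]$ and checks it again has zero $V$-component, hence lies in $\LL{*}^-\Delta\subseteq\ad(*)$ by the exact sequence just proved — this is where the sequences do real work. And since $\CC{*}^-\neq 0$ always (it contains the image of $k$, or at worst of $\mathbb{Z}$, acting nontrivially as $\CC{*}$ is a unital ring), $\ker\Delta=\CC{*}^-\neq 0$, so $\Der_k(*)$ contains the faithful abelian copy $\CC{*}^-\oplus\CC{*}^-$ claimed — presumably via $\chi$ together with one further diagonal-type copy inside $\Der_{\CC{*}}(*)$ coming from the previous Proposition's kernel term.

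The main obstacle I anticipate is the kernel computation and the "it lies in $\CC{*}$" step: making the 3-pile-shuffle rigorous requires carefully distinguishing which side each of $L^U,R^U,\dots$ acts on, invoking the bimodule relations $(\lambda u)\mu*v=\lambda(u\mu)*v$ etc. from the preceding Proposition, and then using full nondegeneracy to promote "$\in V^\bot$" and "$\in W^+$" congruences to genuine equalities. The sign bookkeeping in the Lie-homomorphism check (because of the anti-isomorphism $\End({}_kU)\to\End(U_k)$ and because $\MM{*}$ has its $U$-action on the right but its $V$-action on the left) is routine but error-prone, and is exactly the situation Remark~\ref{rem:notation} warns about; everything else is a direct unwinding of the definitions of $\Der_k(*)$, $\LL{*}$, $\MM{*}$, $\RR{*}$, and $\CC{*}$.
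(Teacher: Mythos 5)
Your overall strategy is exactly the paper's: define $\Delta$ summand-by-summand on $\LMR_*^-$, verify the derivation identity, compute $\ker\Delta$ by a 3-pile-shuffle, and read the three short exact sequences off the condition that one component of a derivation vanishes. But your definition of $\Delta$ on the $\MM{*}$-summand is wrong, and the ``analogous one-line check'' you defer would expose it: with $\mu\Delta=(R^U_\mu\oplus L^V_\mu\oplus 0)$ the derivation identity gives $(uR^U_\mu)*v+u*(L^V_\mu v)=(u\mu)*v+u*(\mu v)=2\,\bigl((u\mu)*v\bigr)$, which is not $(u*v)\cdot 0=0$ outside characteristic $2$. (Your $\LL{*}$- and $\RR{*}$-pieces are genuine derivations; only the middle one fails.) The mid-scalars must enter with a relative minus sign: the paper's $\Delta$ sends $(\lambda,\mu,\rho)$ to $(R^U_\mu-L^U_\lambda,\,R^V_\rho-L^V_\mu;\,R^W_\rho-L^W_\lambda)$, so the two $\mu$-terms cancel against each other and what survives is $u*v\rho-\lambda u*v=(u*v)(R^W_\rho-L^W_\lambda)$. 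The error propagates into your kernel computation: with your signs the putative centroid element satisfies $(u\sigma)*v=-u*(v\sigma)$ and so does \emph{not} lie in $\CC{*}$. With the corrected signs the kernel is $\{L^U_\lambda=R^U_\mu,\ L^V_\mu=R^V_\rho,\ L^W_\lambda=R^W_\rho\}$, and your 3-pile-shuffle (using full nondegeneracy and the bimodule relations) then correctly identifies it with a diagonal copy of $\CC{*}^-$, as in the paper.

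The rest is essentially the paper's argument: the restriction sequences come from the observation that $R^V_\delta=0$ forces $uR^U_\delta*v=(u*v)R^W_\delta$, i.e.\ $(L^U_\delta,L^W_\delta)\in\LL{*}$; and your derivation of the ideal property of $\ad(*)$ from those three sequences is a correct, slightly more explicit version of what the paper leaves implicit. Two smaller inaccuracies: the cross-brackets such as $[\lambda\Delta,\mu'\Delta]$ vanish not because $\LL{*},\MM{*},\RR{*}$ act on disjoint pairs of modules (each two of them share a module) but because on the shared module one acts on the left and the other on the right, and these commute by the bimodule property, which itself needs full nondegeneracy. And the final claim that $\Der_k(*)$ contains a faithful $\CC{*}^-\oplus\CC{*}^-$ is not obtained ``via $\chi$ plus a copy inside $\Der_{\CC{*}}(*)$''; the paper's point is that $\CC{*}$ embeds diagonally into each of $\LL{*}$, $\MM{*}$, $\RR{*}$, giving $\CC{*}^-\oplus\CC{*}^-\oplus\CC{*}^-\subseteq\LMR_*^-$, of which $\Delta$ kills only the single copy $\CC{*}\chi$, so two independent commuting copies survive in the image $\ad(*)$.
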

\begin{proof}
Let $\Delta$ map $\LMR_*^{-}$ into $\End(U)\oplus \End(V)\oplus \End(W)$ by
\begin{align*}
	\left((L_\lambda^U,L_\lambda^W),(R_\mu^U,L_\mu^V), (R_\rho^V,R_\rho^W)\right)
		& \mapsto (R_\mu^U-L_\lambda^U,R_\rho^V-L_\mu^V; R_\rho^W-L_\lambda^W).
\end{align*}
For all $u\in U$ and all $v\in V$, 
\begin{align*}
	u(R_\mu^U-L_\lambda^U)*v+u*v(R_\rho^V-L_\mu^V)
%		& = u\mu*v-\lambda u*v+u*v\rho-u*\mu v \\
		& = u*v\rho-\lambda u*v
		 = (u*v)(R_\rho^W-L_\lambda^W).
\end{align*}
So the image of $\Delta$ lies in $\Der_k(*)$.   Also $\Delta$ is $k$-linear.   
Since $*$ is fully nondegenerate the three-pile shuffle applies to say that 
$U$, $V$, and $W$ are $(\LL{*},\MM{*})$-, $(\MM{*},\RR{*})$-, and $(\LL{*},\RR{*})$-bimodules
respectively.  Thus, for example, $[R_\mu^U,L_{\lambda'}^U]=0$. Hence,
\begin{align*}
	[(\lambda,\mu,\rho),(\lambda',\mu',\rho')]\Delta 
%		& = ([\mu,\mu']_U-[\lambda,\lambda']_U^{\op},
%			[\rho,\rho']_V-[\mu,\mu']_V^{\op},
%			[\rho,\rho']_W-[\lambda,\lambda']_W^{\op})\\
		& = \left(\left[R_\mu^U,R_{\mu'}^U\right]-\left[R_\mu^U,L_{\lambda'}^U\right]
				-\left[L_\lambda^U,R_{\mu'}^U\right]+\left[L_\lambda^U,L_{\lambda'}^U\right],\right.\\
		& \qquad	\left[R_\rho^V,R_{\rho'}^V\right]-\left[R_\rho^V,L_{\mu'}^V\right]
					-\left[R_{\rho'}^V,L_{\mu}^U\right]+\left[L_\mu^V,L_{\mu'}^V\right],\\
		& \qquad	\left.\left[R_\rho^W,R_{\rho'}^W\right]-\left[R_\rho^W,L_{\lambda'}^W\right]
						-\left[R_{\rho'}^W,L_\lambda^W\right]+\left[L_\lambda^W,L_{\lambda'}^W\right]\right)\\
%		&=([\mu-\rho^{\op},\mu'-{\rho'}^{\op}]_U,
%			[\rho-\mu^{\op},\rho'-{\mu'}^{\op}]_V,
%			[\rho-\lambda^{\op},\rho'-{\lambda'}^{\op}]_W)\\
		& = [(\lambda,\mu,\rho)\Delta,(\lambda',\mu',\rho')\Delta]
\end{align*} 
Therefore $\Delta$ is a Lie homomorphism.

Next we define an associative ring homomorphism 
 $\chi$ from $\CC{*}$ to
$\LMR_*$ which consequently is also a Lie homomorphism $\CC{*}^{-}\to \LMR_*^{-}$.
As $\CC{*}$ is commutative, the anti-isomorphism $R_\sigma^U\mapsto L_{\sigma}^U$
restricted to $\sigma\in \CC{*}$ is an isomorphism (the identity on $\CC{*}|^U$). Define $\chi$:
\begin{align*}
	(R_\sigma^U,R_\sigma^V; R_\sigma^W)\chi & = 
		((L_\sigma^U,L_\sigma^W),(R_\sigma^U,L_\sigma^V),(R_\sigma^V,R_\sigma^W)).
\end{align*}
Notice $\chi$ is a monomorphism of associative (also Lie) rings.
Also, $\chi\Delta=0$.  Indeed, if $(\lambda,\mu,\rho)\Delta=0$ then $L_\lambda^U=R_\mu^U$, 
$R_\rho^V=L_\mu^V$, and $L_\lambda^W=R_\rho^W$.  Hence,
\begin{align*}
	uR_{\mu}^U * v & = \lambda u *v = (u*v) R_\rho^W  = u*(vR_\rho^V).
\end{align*}
That is, $(R_\mu^U,R_\rho^V;R_\rho^W)\in \CC{*}$.  Furthermore,
$(R_\mu^U,R_\rho^V;R_\rho^W)\chi=(\lambda,\mu,\rho)$.  So the sequence $\chi,\Delta$ is exact.

Next observe that restriction of $\Delta$ to $\LL{*}^{-}$ is monic and has image which is trivial on $V$.
Indeed, if $\delta\in \Der(*)$ and $R_\delta^V=0$ then $uR_\delta^U*v=(u*v)R_\delta^W$, 
so that $(L_\delta^U,L_\delta^W)\in \LL{*}$.  Hence, $\Der_k(*)\to \Der_k(*)|^V$ has kernel $\LL{*}^{-}\Delta$.
Likewise with $\MM{*}$ and $\RR{*}$.

Lastly, $*$ is fully nondegenerate and so $\CC{*}$ embeds as a subring of $\LL{*}$, $\MM{*}$, and $\RR{*}$.  So 
$\CC{*}^{-}\oplus \CC{*}^{-}\oplus \CC{*}^{-}\subseteq \LL{*}^{-}\oplus \MM{*}^{-}\oplus \RR{*}^{-}$.
As the kernel of $\Delta$ is a single copy of $\CC{*}^{-}$, $\Der(*)$ contains a faithful copy of
$\CC{*}^{-}\oplus \CC{*}^{-}$ (a nontrivial set but with trivial commutator).
\end{proof}

Define
\begin{align}\label{def:ad}
	\ad(*) = (\LMR_*^-)\Delta \cong \LMR_*^{-}/\CC{*}^{-}.
\end{align} 
By Proposition~\ref{prop:der-1} $\ad(*)$ is an ideal of $\Der_k(*)$.  Hence, $\Der_k(*)$ acts 
on $\ad(*)$ making $\LMR_*^-$ into a Lie $\Der_k(*)$-module. This is a
symptom of an even more fortunate event: $\Der_k(*)$ acts as derivations on the {\em associative} structure of $\LMR_*$.
We prove:

\begin{thm}\label{thm:der-der}
There is an exact sequence of Lie $k$-algebras
\begingroup
%\verb|\thinmuskip=0mu:| \par
\setlength{\thinmuskip}{0mu}
\begin{align*}
	0\to \Der_{\LMR_*}(*)\hookrightarrow \Der_k(*) \overset{\Xi}{\longrightarrow} \Der_k(\LMR_*).
\end{align*}
\endgroup
Furthermore $0<\CC{*}^{-}\oplus \CC{*}^{-}$ embeds in $\Der_{\LMR_*}(*)$.
\end{thm}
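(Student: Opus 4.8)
\textbf{Proof proposal for Theorem~\ref{thm:der-der}.}

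The plan is to construct the map $\Xi$ explicitly and verify exactness in three stages: (i) $\Xi$ is well-defined with image in $\Der_k(\LMR_*)$; (ii) its kernel is exactly $\Der_{\LMR_*}(*)$; (iii) the last sentence about $\CC{*}^{-}\oplus\CC{*}^{-}$ embedding into $\Der_{\LMR_*}(*)$. First I would define, for $\delta\in\Der_k(*)$, its action on $\LMR_*=\LL{*}\oplus\MM{*}\oplus\RR{*}$ component-wise by commutators: on $\LL{*}$ send $\lambda=(L_\lambda^U,L_\lambda^W)$ to $([R_\delta^U,L_\lambda^U],[R_\delta^W,L_\lambda^W])$ (here I must be careful, as Remark~\ref{rem:notation} warns, to use the left-action avatars of $\lambda$), and similarly $\mu=(R_\mu^U,L_\mu^V)\mapsto([R_\delta^U,R_\mu^U],[R_\delta^V,L_\mu^V])$ and $\rho=(R_\rho^V,R_\rho^W)\mapsto([R_\delta^V,R_\rho^V],[R_\delta^W,R_\rho^W])$. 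This is the same ``3-pile-shuffle'' bookkeeping used in the proof of the displayed Proposition just before Proposition~\ref{prop:der-1}, only now carried out on $\LMR_*$ rather than on $\CC{*}$. The point is that the defining identity of $\Der_k(*)$, namely $(u\delta)*v+u*(v\delta)=(u*v)\delta$, differentiates the scalar-linearity identities $(\lambda u)*v=\lambda(u*v)$, $(u\mu)*v=u*(\mu v)$, $u*(v\rho)=(u*v)\rho$; feeding the derivation identity into each of these and using full nondegeneracy shows the commutator $[\delta,\lambda]$ again lies in $\LL{*}$, and likewise for $\mu,\rho$. That $\delta\mapsto[\delta,-]$ is a derivation of the \emph{associative} product on $\LMR_*$ is then the Leibniz rule $[\delta,xy]=[\delta,x]y+x[\delta,y]$ for commutator brackets, applied blockwise; and $\delta\mapsto\Xi(\delta)$ is a Lie homomorphism by the Jacobi identity.

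Next I would identify $\ker\Xi$. If $\Xi(\delta)=0$ then $R_\delta^U$ commutes with every $L_\lambda^U$ and every $R_\mu^U$, i.e.\ $R_\delta^U$ is $\LL{*}$-linear on the left and $\MM{*}$-linear on the right of $U$; likewise $R_\delta^V$ is $(\MM{*},\RR{*})$-bilinear on $V$ and $R_\delta^W$ is $(\LL{*},\RR{*})$-bilinear on $W$. By the Proposition preceding this section, $U,V,W$ are genuine $(\LL{*},\MM{*})$-, $(\MM{*},\RR{*})$-, $(\LL{*},\RR{*})$-bimodules (here full nondegeneracy is essential), so these commutation conditions say precisely $\delta\in\gl({_{\LL{*}}U}_{\MM{*}})\times\gl({_{\MM{*}}V}_{\RR{*}})\times\gl({_{\LL{*}}W}_{\RR{*}})$, which together with the derivation identity for $*$ is exactly the definition of $\Der_{\LMR_*}(*)$. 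Conversely any $\delta\in\Der_{\LMR_*}(*)$ commutes with all the left/right scalar operators by definition, so $\Xi(\delta)=0$. Hence the sequence $0\to\Der_{\LMR_*}(*)\hookrightarrow\Der_k(*)\overset{\Xi}{\to}\Der_k(\LMR_*)$ is exact.

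For the final assertion, recall from the end of the proof of Proposition~\ref{prop:der-1} that, since $*$ is fully nondegenerate, $\CC{*}$ embeds as a subring of each of $\LL{*}$, $\MM{*}$, $\RR{*}$, and $\CC{*}$ acts $\LMR_*$-linearly on $U$, $V$, $W$ (this is the statement, from the ``centroids, bimodules, and 3-pile-shuffles'' subsection, that the $\CC{*}$-action commutes with that of $\LL{*},\MM{*},\RR{*}$). Therefore the map $\chi$ from that proof, when composed with $\Delta$, lands its image in $\Der_{\LMR_*}(*)$ rather than merely in $\Der_k(*)$: for $\sigma\in\CC{*}$ the operator $(R_\sigma^U,R_\sigma^V;R_\sigma^W)$ is itself $\LMR_*$-bilinear on each coordinate, and so is any $\mathbb{Z}$-linear combination or commutator thereof. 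As in Proposition~\ref{prop:der-1}, $\ker(\Delta\!\restriction_{\CC{*}^{-}\oplus\CC{*}^{-}\oplus\CC{*}^{-}})$ is a single diagonal copy of $\CC{*}^{-}$, leaving a faithful copy of $\CC{*}^{-}\oplus\CC{*}^{-}$ (with trivial bracket) inside $\Der_{\LMR_*}(*)$, which is nonzero because $1\in\CC{*}$.

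The main obstacle I anticipate is purely notational rather than conceptual: keeping the left/right operator conventions straight so that the commutators $[\delta,\lambda]$, $[\delta,\mu]$, $[\delta,\rho]$ are formed against the \emph{correct} avatar ($L$ versus $R$) of each scalar on each of $U,V,W$, and checking that the resulting triple again satisfies the one-sided linearity defining $\LL{*}$, $\MM{*}$, $\RR{*}$ respectively. Once the bookkeeping of Remark~\ref{rem:notation} is set up, every verification is a one-line manipulation feeding the derivation identity into a scalar-linearity identity and invoking full nondegeneracy, exactly parallel to the $\CC{*}$ computation already displayed before Proposition~\ref{prop:der-1}.
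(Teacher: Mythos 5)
Your proposal follows the paper's own proof essentially verbatim: you define $\Xi$ by component-wise commutators of $\delta$ against the scalar operators, verify membership in $\LMR_*$ by feeding the derivation identity into the scalar-linearity identities, get the Leibniz rule for the associative product, identify the kernel as exactly $\Der_{\LMR_*}(*)$, and obtain the final embedding by noting that $\CC{*}$ commutes with $\LMR_*$ so $\chi\Delta$ lands in $\Der_{\LMR_*}(*)$ with a single diagonal copy of $\CC{*}^{-}$ killed. The only divergence is the choice of $L$- versus $R$-avatars inside the commutators, which you correctly flag as bookkeeping rather than substance.
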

\begin{proof}
Define $\Xi_\delta$ to map $(\lambda,\mu,\rho)\in \LMR_*$ to
\begin{align*}
	\left(\left(\left[L_\delta^U,L_\lambda^U\right],\left[L_\delta^W,L_\lambda^W\right]\right), 
							\left(\left[R_\mu^U,R_\delta^U\right],\left[L_\delta^V,L_\mu^V\right]\right),
							\left(\left[R_\rho^V,R_\delta^V\right],\left[R_\rho^W,R_\delta^W\right]\right)\right).				
\end{align*}
$\Xi$ is linear in $\delta$ and furthermore,  for all $u\in U$ and all $v\in V$,
\begin{align*}
	\left(\left[L_\delta^U,L_\lambda^U\right]u\right)*v 
%		& = (\delta^{\op}\lambda u)*v-(\lambda \delta^{\op} u)*v\\
%		& = (\lambda u)\delta*v)-\lambda (u\delta)*v\\
		& = (\lambda u*v)\delta-\lambda u*(v\delta)-\lambda ((u*v)\delta)+\lambda u*(v\delta)\\
%		& = \delta^{\op}\lambda (u*v) - \lambda \delta^{\op} (u*v)\\
		& = \left[L_\delta^W,L_\lambda^W\right](u*v). 
%	(u[\mu,\delta]_U)*v 
%		& = u\mu\delta*v-u\delta \mu*v\\
%		& = (u\mu*v)\delta-u\mu*(v\delta)-(u*\mu v)\delta+u*((\mu v)\delta)
%		& = u*((\mu v)\delta)-u*(\mu(v\delta))\\
%		& = u*(\delta^{\op}\mu v)-u*(\mu\delta^{\op} v)\\
%		 = u*([\delta^{\op},\mu]_V v)\\
%	u*(v[\rho,\delta]_V) 
	%& = u*(v\rho\delta)-u*(v\delta\rho) \\
%		& = (u*v\rho)\delta-(u\delta)*(v\rho)-(u*v)\delta\rho+u\delta*v\rho
%		 = (u*v)[\rho,\delta]_W.
\end{align*}
Likewise $\left(u\left[R_\mu^U,R_\delta^U\right]\right)*v=u*\left(\left[L_\delta^V,L_\mu^V\right]v\right)$ and
$u*(v\left[R_\rho^V,R_\delta^V\right])=(u*v)\left[R_\rho^W,R_{\delta}^W\right]$.  
So $(\lambda,\mu,\rho)\Xi_{\delta}\in \LMR_*$.

Next, fix $(\lambda,\mu,\rho), (\lambda',\mu',\rho')\in \LMR_*$ and
$\delta\in \Der_k(*)$.  To abbreviate notation we let $\lambda \delta=(\lambda,0,0)\Xi_\delta$, $\mu\delta=(0,\mu,0)\Xi_\delta$, and
$\rho\delta=(0,0,\rho)\Xi_\delta$.
Using the  associative product in the ring $\LMR_*$ we find
\begin{align*}
	(\lambda\lambda')\delta 
	& = \left( [L_\delta^U, L_\lambda^U L_{\lambda'}^U]-L_\lambda^U L_\delta^U L_{\lambda'}^U
			+L_\lambda^U L_\delta^U L_{\lambda'}^U,\right.\\
	& \qquad	\left.\left[L_\delta^U,L_\lambda^W L_{\lambda'}^W\right]-L_\lambda^W L_\delta^U L_{\lambda'}^W+L_{\lambda}^W L_\delta^W L_{\lambda'}^W\right)\\
	& =(\lambda\delta)\lambda'+\lambda (\lambda'\delta)
%	(\mu\mu')\delta 
%	& = ( (\mu\delta\mu'+\mu\mu'\delta-\delta\mu\mu'-\mu\delta\mu')_U,
%			(\delta^{\op}\mu\mu'-\mu\delta^{\op}\mu'+\mu\delta^{\op}\mu'-\mu\mu'\delta^{\op})_V)\\
%	&= (\mu\delta)\mu'+\mu(\mu'\delta)\\
%	(\rho\rho')\delta
%	&=( (\rho\delta\rho'+\rho\rho'\delta-\delta\rho\rho'-\rho\delta\rho')_V,
%			(\rho\delta\rho'+\rho\rho'\delta-\delta\rho\rho'-\rho\delta\rho')_W)\\
%		& = 		(\rho\delta)\rho'+\rho(\rho'\delta)).
\end{align*}
Also, $(\mu\mu')\delta = (\mu\delta)\mu'+\mu(\mu'\delta)$ and $(\rho\rho')\delta=(\rho\delta)\rho'+\rho(\rho'\delta)$.
Thus $\Xi_\delta\in \Der_k(\LMR_*)$.  Indeed, $\Xi$ is a Lie homomorphism.

The kernel of the action of $\Der_k(*)$ on $\LMR_*$ by definition contains those operators in $\Der_k(*)$ that
commute with each of the scalar actions by $\LMR_*$.  Hence it is exactly $\Der_{\LMR_*}(*)$. 
Since $\CC{*}$ commutes with $\LMR_*$, the embedding of $\CC{*}^{-}\oplus \CC{*}^{-}\oplus \CC{*}^{-}\to \LMR_*^{-}$
followed by $\Delta$ (Proposition~\ref{prop:der-1}) maps into $\Der_{\LMR_*}(*)$.
\end{proof}

We use the following convention.  For $X,Y\in \End(_k U)$, $\ad_X Y=[X,Y]$, and for $X,Y\in \End(U_k)$, 
$\ad_X Y=[Y,X]=-[X,Y]$.  We saw in Theorem~\ref{thm:der-der} that $\Xi:\Der_k(*)\to \Der_k(\LMR_*)$ and in 
Proposition~\ref{prop:der-1} that $\Delta:\LMR_*^-\to \Der_k(*)$ whose image we defined as $\ad(*)$.  
Now we consider their composition.  We prove:
\begin{thm}\label{thm:ad}
For $(\lambda,\mu,\rho)\in \LMR_*$, $(\lambda,\mu,\rho)\Delta\Xi =(\ad_{\lambda},\ad_{\mu},\ad_{\rho})$.
In particular, $(\LMR_*^-)\Delta\Xi=\ad(*)\Xi=\ad(\LMR_*)$ and $\ker \Delta\Xi=Z(\LMR_*)$.
\end{thm}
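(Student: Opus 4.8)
The plan is to compute the composite $\Delta\Xi$ directly from the two formulas already recorded, and then read off the two ``in particular'' consequences. Recall from Proposition~\ref{prop:der-1} that for $(\lambda,\mu,\rho)\in\LMR_*$ we have
\[
(\lambda,\mu,\rho)\Delta=(R_\mu^U-L_\lambda^U,\ R_\rho^V-L_\mu^V;\ R_\rho^W-L_\lambda^W),
\]
an element $\delta\in\Der_k(*)$, and from Theorem~\ref{thm:der-der} that $\Xi_\delta$ sends $(\lambda',\mu',\rho')$ to a triple whose $U$-component is $\bigl([L_\delta^U,L_{\lambda'}^U],[L_\delta^W,L_{\lambda'}^W]\bigr)$, whose $V$-component is $\bigl([R_{\mu'}^U,R_\delta^U],[L_\delta^V,L_{\mu'}^V]\bigr)$, and whose $W$-component is $\bigl([R_{\rho'}^V,R_\delta^V],[R_{\rho'}^W,R_\delta^W]\bigr)$. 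So the only real work is to substitute the components of $\delta=(\lambda,\mu,\rho)\Delta$ into these brackets.

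The key point making this clean is that, since $*$ is fully nondegenerate, the three-pile-shuffle (used already in Proposition~\ref{prop:der-1}) tells us that the left, mid, and right scalar actions commute with one another on each of $U$, $V$, $W$; concretely $[L_\lambda^U,R_\mu^U]=0$, $[L_\mu^V,R_\rho^V]=0$, $[L_\lambda^W,R_\rho^W]=0$, and similarly after transporting through the anti-isomorphism $R\leftrightarrow L$. Hence in the $U$-component of $\Xi_\delta$ applied to $\lambda'$, writing $L_\delta^U$ in terms of the operator $R_\mu^U-L_\lambda^U$ (via the anti-isomorphism $\End(_kU)\to\End(U_k)$, which reverses order, so $L_\delta^U$ corresponds to $R_\mu^U-L_\lambda^U$ acting on the appropriate side), the $R_\mu^U$ part commutes with $L_{\lambda'}^U$ and drops out, leaving exactly $[\,\cdot\,,L_{\lambda'}^U]$ coming from the $L_\lambda^U$ piece, i.e. $\ad_\lambda\lambda'$ in $\LL{*}$. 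The same cancellation in the $V$-component kills the $L_\mu^V$ contribution against $R_{\rho}^V$-type terms and leaves $\ad_\mu\mu'$ in $\MM{*}$; and in the $W$-component the cross terms cancel leaving $\ad_\rho\rho'$ in $\RR{*}$. Assembling the three components gives $(\lambda,\mu,\rho)\Delta\Xi=(\ad_\lambda,\ad_\mu,\ad_\rho)$, which is precisely the inner-derivation map $\LMR_*^-\to\Der_k(\LMR_*)$ componentwise, so its image is $\ad(\LMR_*)$.

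For the kernel: $(\lambda,\mu,\rho)\Delta\Xi=0$ means $\ad_\lambda=0$, $\ad_\mu=0$, $\ad_\rho=0$ as operators on $\End(_kU)$, $\End(_kV)$, $\End(_kW)$ respectively — but one must be slightly careful that vanishing of $\ad_\lambda$ on the ambient endomorphism ring is what is recorded, versus vanishing only on the subring $\LL{*}$. Here $\Xi_\delta$ was \emph{defined} as a map out of $\LMR_*$, so $\ker\Delta\Xi$ a priori only says each $\ad$ vanishes on $\LL{*}$, $\MM{*}$, $\RR{*}$; that says exactly $\lambda\in Z(\LL{*})$, $\mu\in Z(\MM{*})$, $\rho\in Z(\RR{*})$, i.e. $(\lambda,\mu,\rho)\in Z(\LL{*})\oplus Z(\MM{*})\oplus Z(\RR{*})=Z(\LMR_*)$ since $\LMR_*$ is a direct sum of rings. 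Thus $\ker\Delta\Xi=Z(\LMR_*)$. The main obstacle, such as it is, is purely bookkeeping: tracking the left/right conventions and the anti-isomorphism $R\leftrightarrow L$ correctly so that each cross-bracket cancels and the surviving bracket is genuinely $\ad_\lambda$ (and not $-\ad_\lambda$ or $\ad$ on the wrong side) — the ``systematic verification of carefully arranged definitions'' warned about in Remark~\ref{rem:notation}. Once the conventions are pinned down the cancellations are immediate from commutativity of the scalar actions, and no genuinely new idea is needed.
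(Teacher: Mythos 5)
Your proposal is correct and follows essentially the same route as the paper: substitute the components of $(\lambda,\mu,\rho)\Delta$ into the bracket formula defining $\Xi$, use the bimodule commutativity of the left/mid/right actions (the 3-pile-shuffle consequence of full nondegeneracy) to cancel the cross-brackets, and identify the kernel with $Z(\LL{*})\oplus Z(\MM{*})\oplus Z(\RR{*})=Z(\LMR_*)$. The only cosmetic difference is that the paper computes one summand at a time (fixing $\lambda$ with $\mu=\rho=0$ and appealing to linearity) whereas you substitute all three at once, and your careful remark that the $\ad$'s need only vanish on the subrings $\LL{*},\MM{*},\RR{*}$ is exactly the right reading of the kernel computation.
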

\begin{proof}
Fix $\lambda\in \LL{*}^{-}$, set $\delta_{\lambda}=(\lambda,0,0)\Delta\Xi$.  
Note that $(\lambda,0,0)\Delta=(-L_\lambda^U,0;-L_\lambda^W)\in \Der(*)$.
As $V$ and $W$ are respectively $(\MM{*},\LL{*})$- and $(\LL{*},\RR{*})$-bimodules it follows that
for all $(\lambda',\mu,\rho')\in LMR_*$,
\begingroup
%\verb|\thinmuskip=0mu:| \par
\setlength{\thinmuskip}{0mu}
\begin{align*}
	(\lambda',\mu',\rho')\delta_{\lambda} 
		& = \left(\left(\left[L_{\lambda'}^U,L_\lambda^U\right],\left[L_{\lambda'}^W,L_{\lambda}^W\right]\right),
				\left(\left[R_\mu^U,-L_{\lambda}^U\right],\left[0,L_\mu^V\right]\right), 
				\left(\left[R_\rho^V,0\right],\left[R_\rho^W,-L_{\lambda}^W\right]\right)\right)\\
		& = (\ad_{\lambda}(\lambda'), 0, 0)
		= \ad_{(\lambda,0,0)}(\lambda',\mu',\rho').
\end{align*}
\endgroup
By moving similarly through $\MM{*}$ and $\RR{*}$ we confirm that for $(\lambda,\mu,\rho)\in \LMR_*^{-}$, 
$(\lambda,\mu,\rho)\Delta\Xi =(\ad_{\lambda},\ad_{\mu},\ad_{\rho})$. 
Lastly,
\begin{align*}
	\ker \Delta\Xi & \cong \{ (\lambda,\mu,\rho)\in \LMR_*: (\ad_\lambda,\ad_\mu,\ad_\rho)=0\}=Z(\LMR_*).
\end{align*}
\end{proof}

\begin{proof}[Proof of Theorem~\ref{thm:der-2}]
We begin by extending $\Delta$ to $\hat{\Delta}:\LMR_*^{-}\oplus \Der_{\LMR_*}(*)\to \Der(*)$ by 
$(\lambda,\mu,\rho)\oplus \delta\mapsto (\lambda,\mu,\rho)\Delta-\delta$.  The kernel
consists of $(\lambda,\mu,\rho)\oplus (\lambda,\mu,\rho)\Delta$ where
$(\lambda,\mu,\rho)\Delta\in \Der_{\LMR_*}(*)$.  By Theorem~\ref{thm:der-der}, $\Der_{\LMR_*}(*)\Xi=0$
and so we need that $(\lambda,\mu,\rho)\Delta\Xi=0$.
By Theorem~\ref{thm:ad}, 
\begin{align*}\
	\ker\hat{\Delta}=\{(\zeta,\zeta\Delta) : \zeta\in Z(\LMR_*)\}\cong Z(\LMR_*^-).
\end{align*}
So we have an exact sequence $0\to Z(\LMR_*^-)\to \LMR_*^-\oplus \Der_{\LMR}(*)\overset{\hat{\Delta}}{\to} \Der_k(*)$.

Next by Theorems~\ref{thm:der-der} and \ref{thm:ad}, $\Der_{\LMR}(*)\Xi=0$ and $(\LMR_*^-)\Xi=\ad(\LMR_*)$.
So $\im \hat{\Delta}\Xi=\ad(\LMR_*^-)$.  Since $\ad(\LMR_*)$ is an ideal of $\Der(\LMR_*)$ we can induce
$\bar{\Xi}:\Der_k(*)\to \Der_k(\LMR_*)/\ad(\LMR_*)$.  Now $\ker\bar{\Xi}=\im\hat{\Delta}$ and we have
confirmed the existence of the exact sequence state in Theorem~\ref{thm:der-2}.
\end{proof}

%=============================================================================
%
%=============================================================================
\section{The Structure of autotopisms of bimaps}\label{sec:aut}
We now prove Theorem~\ref{thm:auto} (A)--(E).
Once more, we assume familiarity with our notation as discussed in Remark~\ref{rem:notation}.
Throughout let us assume $*:U\times V\bmto W$ is fully nondegenerate.

The approach is analogous to our work with derivations in Section~\ref{sec:der}. 
First, it is sufficient to work with $k=\CC{*}$ because of the following exact sequence:
\begin{align}
\tag{D}
	1\to \Aut_{\CC{*}}(*)\to \Aut_k(*)\to \Aut_k(\CC{*})=\Out_k(\CC{*})
\end{align}
Since $*$ is fully nondegenerate $\CC{*}$ is commutative.  Hence $\Aut_k(\CC{*})=\Out_k(\CC{*})$.
So to change scalars requires an extension by a group of outer automorphisms of $\CC{*}$.
We will keep all claims relative to $k$ for uniformity.  This explains Theorem~\ref{thm:auto}(D).

\begin{thm}\label{thm:scalar-aut}[(Theorem~\ref{thm:auto}(A)--(C)]
For every bimap $*:U\times V\bmto W$, there is an exact sequence
\begin{equation*}
\xymatrix{
1\ar[r] & \CC{*}^\times\ar[r]\ar[r]^{\mathfrak{F}} & \LMR_*^\times \ar[r]^{\mathfrak{G}} & \Aut_k(*)\to 1.
}
\end{equation*}
Furthermore there are short exact sequences characterizing the induced actions by
$\LL{*}$, $\MM{*}$, and $\RR{*}$ respectively, as follows.
\begin{align}
\tag{A}
	1 & \to \LL{*}^\times \to \Aut_k(*) \to \Aut_k(*)|^V\to 1, \\
\tag{B}
	1 & \to \MM{*}^\times \to \Aut_k(*) \to \Aut_k(*)|^W\to 1,\textnormal{ and} \\
\tag{C}
	1 & \to \RR{*}^\times \to \Aut_k(*) \to \Aut_k(*)|^U\to 1.
\end{align}
\end{thm}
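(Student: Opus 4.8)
The plan is to mimic the derivation argument of Section~\ref{sec:der}, replacing Lie brackets by group commutators and additive maps by multiplicative ones. First I would define the analogue $\mathfrak{G}$ of the map $\Delta$ from Proposition~\ref{prop:der-1}. A unit $(\lambda,\mu,\rho)\in\LMR_*^\times$ should be sent to the triple $(L_\lambda^U{}^{-1}R_\mu^U,\; L_\mu^V{}^{-1}R_\rho^V;\; L_\lambda^W{}^{-1}R_\rho^W)$ (with the appropriate care about left/right conventions as stressed in Remark~\ref{rem:notation}); the defining homotopism identity \eqref{def:homotopism} then follows from exactly the same cancellation that made $\Delta$ land in $\Der_k(*)$, namely $\lambda u * v = u\mu * v$, $(u*v)\rho = \lambda(u*v)$, etc. Because $U$, $V$, $W$ are honest $(\LL{*},\MM{*})$-, $(\MM{*},\RR{*})$-, $(\LL{*},\RR{*})$-bimodules (full nondegeneracy, via the 3-pile-shuffle), the relevant left and right actions commute, and this is precisely what makes $\mathfrak{G}$ a \emph{homomorphism} rather than merely a map — the same bimodule identities $[R_\mu^U,L_{\lambda'}^U]=0$ used in Proposition~\ref{prop:der-1}.

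Next I would identify the kernel of $\mathfrak{G}$. A unit $(\lambda,\mu,\rho)$ lies in $\ker\mathfrak{G}$ exactly when $L_\lambda^U=R_\mu^U$, $L_\mu^V=R_\rho^V$, $L_\lambda^W=R_\rho^W$, which by the very definition of the centroid says $(R_\mu^U,R_\rho^V;R_\rho^W)\in\CC{*}$, and conversely every element of $\CC{*}^\times$ arises this way — this is the multiplicative version of the computation showing $\ker\Delta\cong\CC{*}^-$ in Proposition~\ref{prop:der-1}. So I define $\mathfrak{F}:\CC{*}^\times\to\LMR_*^\times$ as the obvious embedding $(R_\sigma^U,R_\sigma^V;R_\sigma^W)\mapsto((L_\sigma^U,L_\sigma^W),(R_\sigma^U,L_\sigma^V),(R_\sigma^V,R_\sigma^W))$, using that the anti-isomorphism $R_\sigma\mapsto L_\sigma$ is the identity on the commutative ring $\CC{*}$, and conclude $\operatorname{im}\mathfrak{F}=\ker\mathfrak{G}$.

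For surjectivity of $\mathfrak{G}$ onto $\Aut_k(*)$ — which is the one genuinely new ingredient beyond the derivation story, since there $\Delta$ was not surjective — I would argue as follows. Given $\phi=(R_\phi^U,R_\phi^V;R_\phi^W)\in\Aut_k(*)$, each $R_\phi^U$ etc. is invertible, and I claim $(\,(R_\phi^U)^{-1}R_?^U,\dots)$-type conjugates produce elements of $\LL{*}^\times$, $\MM{*}^\times$, $\RR{*}^\times$; more directly, the autotopism condition says $R_\phi^U\otimes R_\phi^V$ intertwines $*$ with itself on $W$ via $R_\phi^W$, and composing $\phi$ with the image under $\mathfrak{G}$ of a suitable scalar triple built from $\phi$ itself shows $\phi$ already lies in $\operatorname{im}\mathfrak{G}$. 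Concretely: set $\lambda=(R_\phi^U,R_\phi^W)$ — one checks from \eqref{def:homotopism} that this is a unit of $\LL{*}$ (here full nondegeneracy is essential: $u\phi*v\phi=(u*v)\phi$ with $R_\phi^V$ invertible forces $(R_\phi^U,R_\phi^W)$ to satisfy the left-scalar identity after absorbing $R_\phi^V{}^{-1}$ on the $V$-side) — and similarly $\mu$, $\rho$; then $(\lambda,\mu,\rho)\mathfrak{G}=\phi$. \textbf{This verification that $\phi$ decomposes into commuting scalar pieces is the step I expect to be the main obstacle}, since it requires carefully disentangling which of the three invertible components $R_\phi^U,R_\phi^V,R_\phi^W$ gets packaged into $\LL{*}$ versus $\MM{*}$ versus $\RR{*}$, and checking the bimodule compatibilities so that the product $(\lambda,\mu,\rho)$ is a well-defined unit of $\LMR_*$; the left/right bookkeeping of Remark~\ref{rem:notation} is exactly where errors would creep in.

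Finally, the three short exact sequences (A)--(C) come for free from the first one by restriction: $\LL{*}^\times$ is the preimage under $\mathfrak{G}$ (modulo $\CC{*}^\times$) of the autotopisms trivial on $V$, precisely as in the three displayed sequences of Proposition~\ref{prop:der-1}. Indeed if $\phi\in\Aut_k(*)$ has $R_\phi^V=1$, then $uR_\phi^U*v=(u*v)R_\phi^W$ says $(R_\phi^U,R_\phi^W)\in\LL{*}^\times$, giving the kernel of $\Aut_k(*)\to\Aut_k(*)|^V$; surjectivity onto $\Aut_k(*)|^V$ is definitional. The analogous statements for $\MM{*}$ (trivial on $W$) and $\RR{*}$ (trivial on $U$) are symmetric, obtained by applying the swap $\tilde{*}$ and the obvious relabelling. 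I would present (A)--(C) as immediate corollaries rather than re-deriving each.
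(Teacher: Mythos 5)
Your construction of $\mathfrak{F}$ and $\mathfrak{G}$, the identification $\ker\mathfrak{G}=\im\mathfrak{F}\cong\CC{*}^{\times}$, and the derivation of (A)--(C) by restricting to autotopisms trivial on $V$, $W$, $U$ respectively all coincide with the paper's argument; the paper's $\mathfrak{G}$ is literally your map, $(\lambda,\mu,\rho)\mathfrak{G}=(R^U_{\mu}L^U_{\lambda^{-1}},R^V_{\rho}L^V_{\mu^{-1}};R^W_{\rho}L^W_{\lambda^{-1}})$.

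The genuine problem is the step you yourself flagged: surjectivity of $\mathfrak{G}$ onto $\Aut_k(*)$. Your proposed witness $\lambda=(R_\phi^U,R_\phi^W)$ is not an element of $\LL{*}$ in general. The left-scalar identity $(\lambda u)*v=\lambda(u*v)$ has the \emph{same} $v$ on both sides, whereas the autotopism identity $u\phi*v\phi=(u*v)\phi$ carries $v$ to $v\phi$; ``absorbing $(R_\phi^V)^{-1}$ on the $V$-side'' only yields $u\phi*v=(u*(v\phi^{-1}))\phi$, which is the required identity only when $R_\phi^V=1$ --- and that special case is exactly sequence (A), not surjectivity. Moreover the claim itself is false, not merely unproven by this route. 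For the multiplication bimap $K\times K\bmto K$ of a nontrivial Galois extension $K/k$, the ring $\LMR_*$ commutes with $\CC{*}\cong K$, so every element of $\im\mathfrak{G}$ centralizes $\CC{*}$; yet each nontrivial $\sigma\in\Gal(K/k)$ gives a $k$-linear autotopism $(\sigma,\sigma;\sigma)$ acting nontrivially on $\CC{*}$, hence outside $\im\mathfrak{G}$. More globally, were $\mathfrak{G}$ surjective one would have $\Aut_k(*)=\Inn(*)$ for every bimap, which would make sequences (D) and (E) of Theorem~\ref{thm:auto} pointless and reduce Theorem~\ref{thm:Skolem-Noether} to a tautology rather than a theorem requiring the classical Skolem--Noether input $\Out_k(\M_a(k))=1$.

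The trailing ``$\to 1$'' in the first displayed sequence of the statement is an overstatement that the paper's own proof also does not establish: that proof verifies exactness only at $\CC{*}^{\times}$ and at $\LMR_*^{\times}$ (i.e.\ $\mathfrak{F}$ injective with image $\ker\mathfrak{G}$), and then proves (A)--(C) exactly as you do. So the correct repair is not to find a better surjectivity argument but to drop that claim; once you delete that step, your proposal matches the paper's proof.
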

\begin{proof}
Set $\sigma \mathfrak{F} = ((L_{\sigma}^U,L^W_{\sigma}), (R^U_{\sigma},L^V_{\sigma}),(R^V_{\sigma}, R^W_{\sigma}))$.
As $\CC{*}$ is commutative, for $X\in \{U,V,W\}$, $R_{\sigma}^X\mapsto L_\sigma^X$ is the identity.
Hence, $uL_{\sigma}^U*v  = uR_\sigma^U*v=(u*v)R_\sigma^W=(u*v)L_{\sigma}^W$.
Indeed, $(L_\sigma^U,L_\sigma^W)\in \LL{*}^\times$ and by similar treatment in $\MM{*}$, $\RR{*}$ we see that 
$\sigma\mathfrak{F}\in \LMR_*^\times$.
That $\mathfrak{F}$ is a homomorphism again follows from the commutativity.  Finally, observe that $\sigma\mathfrak{F}=0$
forces $\sigma=0$.

Next define
\begin{align*}
	(\lambda,\mu,\rho)\mathfrak{G} & =
		(R^U_{\mu}L^U_{\lambda^{-1}},R^V_{\rho}L^V_{\mu^{-1}};R^W_{\rho}L^W_{\lambda^{-1}}).
\end{align*}
A 3-pile shuffle confirms this is a homotopism, and the inverse is $(\lambda,\mu,\rho)^{-1}\mathfrak{G}$.
For the homomorphism property recall that $*$ is fully nondegenerate so each the left and right actions
commute (another 3-pile shuffle).  In particular we observe
\begin{align*}
	(\lambda\lambda',\mu\mu',\rho\rho')\mathfrak{G} 
%	(R_{\mu \mu'}^U L_{(\lambda\lambda')^{-1}}^U,R_{\rho\rho'}^V L_{(\mu\mu')^{-1}}^V; R_{\rho\rho'}^W L_{(\lambda\lambda')^{-1}}^W)
%	& = (L_{\lambda}L_{\lambda'} R_{\mu} R_{\mu'},R_{\rho} R_{\rho'} L_{\mu}L_{\mu'}; 
%		L_{\lambda} L_{\lambda'} R_{\rho} R_{\rho'})\\
	& = (R_{\mu}^U L_{\lambda^{-1}}^U  R_{\mu'}^UL_{{\lambda'}^{-1}}^U,
			R^V_{\rho} L^V_{\mu^{-1}}R^V_{\rho'}  L^V_{{\mu'}^{-1}}; 
			R_{\rho}^W L_{\lambda^{-1}}^W   R_{\rho'}^W L^W_{{\lambda'}^{-1}})\\
%	& = (L_{\lambda}R_{\mu}, R_{\rho} L_{\mu};     L_{\lambda} R_{\rho} )  
%		( L_{\lambda'} R_{\mu'},R_{\rho'}  L_{\mu'}; L_{\lambda'} R_{\rho'})	
	& = (\lambda,\rho,\mu)\mathfrak{G}(\lambda',\mu',\rho')\mathfrak{G}.
\end{align*}
So $\mathfrak{G}$ is a homomorphism.

Now if $\sigma\in \CC{*}^\times$ then 
\begin{align*}
	\sigma\mathfrak{F}\mathfrak{G}
		& = ((L_{\sigma}^U,L^W_{\sigma}), (R^U_{\sigma},L^V_{\sigma}),(R^V_{\sigma}, R^W_{\sigma}))\mathfrak{G}
		 = (R^U_{\sigma} L^U_{\sigma^{-1}}, R^V_\sigma L^V_{\sigma^{-1}}; R^W_\sigma L^W_{\sigma^{-1}})=1.
\end{align*}
So we have a chain complex.  Finally, if $(\lambda,\mu,\rho)\mathfrak{G}=1$ then $R^U_{\mu}=L_\lambda^U$,
$R^V_\rho =L^V_\mu$, and $R^W_\rho=L_\lambda^W$.  So
\begin{align*}
	uR_\mu^U*v & = \lambda u*v=(u*v)R_\rho^U = u*vR_\rho^V.
\end{align*}
Thus, $(R_\mu^U,R_\rho^V;R_\rho^W)\in \CC{*}^\times$ and 
$(R_\mu^U,R_\rho^V;R_\rho^W)\mathfrak{F}=(\lambda,\mu,\rho)$.  So the first sequence is exact.  

For the next three sequences it suffices to consider the induced representation $\Aut(*)|^V$
of $\Aut(*)$ acting on $V$.  Evidently
$(\lambda, 1,1)\mathfrak{G}=(L_{\lambda^{-1}}^U,1;L_{\lambda^{-1}}^U)$ lies in the kernel of
this representation.  Next suppose  $\phi\in \Aut(*)$ such that $R_{\phi}=1$.  Then
\begin{align*}
	(uL_{\phi^{-1}}^U) * v & = (u*v\phi)\phi^{-1}=(u*v)L_{\phi^{-1}}^W.
\end{align*}
So $(L_{\phi^{-1}}^U,L_{\phi^{-1}}^W)\in \LL{*}^\times$.  In fact 
$(\phi^{-1},1,1)\mathfrak{G}  =
		(L_{\phi}^U, 1; L_{\phi}^W)=(R_\phi^U,R_\phi^V; R_\phi^W)=\phi$.
The sequence is exact, and the others follow likewise.  
\end{proof}

Similar to our above definition \eqref{def:ad} for $\ad(*)$, we 
ascribe the notation and vocabulary of ``inner'' actions to $\LMR_*^{\times}$ as follows.
\begin{align}\label{def:inn}
	\Inn(*) & = \{ (u\mapsto \lambda^{-1} u\mu,v\mapsto \mu^{-1} v\rho; w\mapsto \lambda^{-1} w\rho) : (\lambda,\mu,\rho)\in \LMR_*^{\times}\}.
\end{align}
In particular $\Inn(*)$ is a central product 
$\Inn(*)  \cong \LL{*}^{\times}\circ \MM{*}^\times \circ \RR{*}^\times \cong (\LMR_*^\times)/\CC{*}^\times$.
We refer to these quotient $\Aut_k(*)/\Inn(*)$ as the ``outer'' autotopisms.
In this way our generalization of Skolem-Neother (Theorem~\ref{thm:Skolem-Noether}) says that 
$k$-linear autotopisms of rectangular matrix products are inner.

\begin{thm}\label{thm:aut-aut}
There is an exact sequence of groups:
\begin{align*}
1\to \Aut_{\LMR}(*) \to \Aut_k(*)\overset{\mathfrak{H}}{\to} \Aut_k(\LMR_*).
\end{align*}  
Furthermore, $(\LMR_*^{\times})\mathfrak{G}\mathfrak{H}=\Inn(*)\mathfrak{H}=\Inn(\LMR_*)$.
\end{thm}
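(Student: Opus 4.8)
The plan is to mimic, at the group level, the infinitesimal arguments of Theorems~\ref{thm:der-der} and \ref{thm:ad}, replacing Lie commutators by conjugation. For $\phi\in\Aut_k(*)$ I would define $\mathfrak{H}_\phi\colon\LMR_*\to\LMR_*$ componentwise, conjugating each scalar action by the matching component of $\phi$: for $\lambda=(L^U_\lambda,L^W_\lambda)\in\LL{*}$ let $\lambda\mathfrak{H}_\phi$ be the pair with $U$- and $W$-components $(R^U_\phi)^{-1}L^U_\lambda R^U_\phi$ and $(R^W_\phi)^{-1}L^W_\lambda R^W_\phi$; for $\mu=(R^U_\mu,L^V_\mu)\in\MM{*}$ conjugate $R^U_\mu$ by $R^U_\phi$ and $L^V_\mu$ by $R^V_\phi$; and for $\rho=(R^V_\rho,R^W_\rho)\in\RR{*}$ conjugate by $R^V_\phi$ and $R^W_\phi$. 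The first and only substantive task is to check that $\lambda\mathfrak{H}_\phi$ again satisfies the defining identity of $\LL{*}$ (and likewise $\mu\mathfrak{H}_\phi\in\MM{*}$, $\rho\mathfrak{H}_\phi\in\RR{*}$): replacing $u,v$ by $u\phi^{-1},v\phi^{-1}$, this reduces to the chain $((\lambda u)*v)\phi=(\lambda(u*v))\phi$, which is $\lambda\in\LL{*}$ sandwiched between two uses of the homotopism identity \eqref{def:homotopism} for $\phi$ (one to slide $(\lambda u)\phi*v\phi$ inward, one to rewrite $(u*v)\phi$ as $u\phi*v\phi$). I expect this verification -- together with keeping the anti-isomorphism $R^X_\alpha\mapsto L^X_\alpha$ and the left/right conventions straight -- to be the main bookkeeping obstacle; there is no new idea beyond it.

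Granting that, $\mathfrak{H}_\phi$ preserves the decomposition $\LMR_*=\LL{*}\oplus\MM{*}\oplus\RR{*}$ and, being conjugation by $k$-linear operators, is a $k$-algebra endomorphism of $\LMR_*$; its two-sided inverse is $\mathfrak{H}_{\phi^{-1}}$, so $\mathfrak{H}_\phi\in\Aut_k(\LMR_*)$. That $\mathfrak{H}\colon\phi\mapsto\mathfrak{H}_\phi$ is a homomorphism of groups is immediate from $R^X_{\phi\psi}=R^X_\phi R^X_\psi$ for $X\in\{U,V,W\}$. For exactness at $\Aut_k(*)$, note that $\mathfrak{H}_\phi=1$ says exactly that $R^U_\phi$ commutes with every $L^U_\lambda$ $(\lambda\in\LL{*})$ and every $R^U_\mu$ $(\mu\in\MM{*})$, that $R^V_\phi$ commutes with every $L^V_\mu$ and $R^V_\rho$, and that $R^W_\phi$ commutes with every $L^W_\lambda$ and $R^W_\rho$; this is precisely the assertion that $\phi$ is simultaneously $\LL{*}$-, $\MM{*}$-, and $\RR{*}$-linear, i.e.\ $\ker\mathfrak{H}=\Aut_{\LMR}(*)$. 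Hence the sequence $1\to\Aut_{\LMR}(*)\to\Aut_k(*)\overset{\mathfrak{H}}{\to}\Aut_k(\LMR_*)$ is exact.

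For the final assertion I would evaluate $\mathfrak{G}\mathfrak{H}$ on a generator. Given $(\lambda,\mu,\rho)\in\LMR_*^\times$, Theorem~\ref{thm:scalar-aut} gives $g:=(\lambda,\mu,\rho)\mathfrak{G}=(R^U_\mu L^U_{\lambda^{-1}},\,R^V_\rho L^V_{\mu^{-1}};\,R^W_\rho L^W_{\lambda^{-1}})$. Since $*$ is fully nondegenerate, the three-pile shuffle makes $U$ an $(\LL{*},\MM{*})$-bimodule, so $L^U_{\lambda'}$ commutes with $R^U_\mu$ and $R^U_{\mu'}$ commutes with $L^U_{\lambda^{-1}}$. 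Thus conjugating $L^U_{\lambda'}$ by $R^U_g=R^U_\mu L^U_{\lambda^{-1}}$ collapses to conjugation by $L^U_{\lambda^{-1}}$, which through the anti-isomorphism is conjugation of $\lambda'$ by $\lambda$ inside $\LL{*}$; similarly conjugating $R^U_{\mu'}$ by $R^U_g$ collapses to conjugation by $R^U_\mu$, i.e.\ conjugation of $\mu'$ by $\mu$. Running the same computation on the $V$- and $W$-slots (using that $V$ is an $(\MM{*},\RR{*})$-bimodule and $W$ an $(\LL{*},\RR{*})$-bimodule) shows $\mathfrak{H}_g$ is the inner automorphism of $\LMR_*$ induced by $(\lambda,\mu,\rho)$. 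Therefore $(\LMR_*^\times)\mathfrak{G}\mathfrak{H}=\Inn(\LMR_*)$; and since comparing the definition \eqref{def:inn} of $\Inn(*)$ with the formula for $\mathfrak{G}$ shows $\Inn(*)=(\LMR_*^\times)\mathfrak{G}$, we also get $\Inn(*)\mathfrak{H}=\Inn(\LMR_*)$, which completes the proof.
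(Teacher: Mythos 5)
Your proposal is correct and follows essentially the same route as the paper: define $\mathfrak{H}$ by componentwise conjugation, verify membership in $\LL{*}$, $\MM{*}$, $\RR{*}$ by sandwiching the scalar identity between two applications of the homotopism identity, identify the kernel with $\Aut_{\LMR}(*)$, and use the bimodule commutations (3-pile shuffle) to collapse conjugation by $(\lambda,\mu,\rho)\mathfrak{G}$ to the inner automorphism of $\LMR_*$ induced by $(\lambda,\mu,\rho)$. The extra bookkeeping you flag (the anti-isomorphism $R^X_\alpha\mapsto L^X_\alpha$ and left/right conventions) is handled exactly as you anticipate.
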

\begin{proof}
First we show that $\Aut_k(*)$ acts on the rings $\LL{*}$, $\MM{*}$, $\RR{*}$, and
$\CC{*}$ by conjugation.  Given $\rho \in \LL{*}$ and $\phi\in \Aut_k(*)$, 
\begin{align*}
	uR_{\phi^{-1}} L_{\rho} R_{\phi} * v 
%		& = 	uR_{\phi^{-1}} L_{\rho} R_{\phi} * vR_{\phi^{-1}} R_{\phi}\\
		& = 	(uR_{\phi^{-1}} L_{\rho} * vR_{\phi^{-1}}) R_{\phi}
%		& = 	(uR_{\phi^{-1}} * vR_{\phi^{-1}}) L_{\rho}R_{\phi}\\
		 = 	(u* v)R_{\phi^{-1}} L_{\rho}R_{\phi}.
\end{align*}
Hence, $\rho^{\phi}\in\LL{*}$.
The proof is confirmed similarly for the other rings.
So there is a homomorphism $\mathfrak{H}:\Aut_k(*)\to \Aut(\LMR_*)$.
The kernel of $\mathfrak{H}$ is by definition $\Aut_{\LMR}(*)$.

Take $(\lambda,\mu,\rho)\in \LMR_*^\times$.  Set $\phi=(\lambda,\mu,\rho)\mathfrak{G}$.  Take
$\lambda'\in \LL{*}$.  It follows that
\begin{align*}
	(\lambda')^\phi 
%		& = (R_{\phi^{-1}}^U L_{\lambda'}^U R_{\phi}^U, R_{\phi^{-1}}^W L_{\lambda'} R_\phi^W)
		& = (L^U_{\lambda} L_{\lambda'}^U L^U_{\lambda^{-1}}, L_\lambda^W L_{\lambda'}^W L_{\lambda^{-1}}^W)
		= ( \lambda \lambda'\lambda^{-1}, 1,1).		
\end{align*}
We find similarly in the other components.  So $\phi\mathfrak{H}\in \Inn(\LMR_*)$.
\end{proof}

It remains to prove Theorem~\ref{thm:auto}(E) which asks that we prove
the following exact sequence of groups.
\begin{align}
\tag{E}
1\to Z(\LMR_*)^{\times} \overset{\mathfrak{F}}{\rightarrow} \LMR_*^{\times}\times \Aut_{\LMR}(*) \overset{\mathfrak{G}}{\rightarrow} \Aut_k(*) 
	\overset{\mathfrak{H}}{\rightarrow} \Out_k(\LMR_*).
\end{align}

\begin{proof}[Proof of Theorem~\ref{thm:auto}(E)]
Extend $\mathfrak{G}$ from Theorem~\ref{thm:scalar-aut} to 
\begin{align*}
	\hat{\mathfrak{G}}:\LMR_*^{\times}\times \Aut_{\LMR}(*)\to \Aut_k(*)
\end{align*}
so that
$((\lambda,\mu,\rho),\alpha)\mapsto (\lambda,\mu,\alpha)\mathfrak{G} \alpha^{-1}$.  The kernel of $\hat{\mathfrak{G}}$ 
is parameterized by $(\lambda,\mu,\alpha)$ where $(\lambda,\mu,\rho)\mathfrak{G}\in \Aut_{\LMR}(*)$, i.e. the action by the
scalars on $U$, $V$ and $W$ is $\LMR_*$-linear.  In particular $(\lambda,\mu,\rho)\in Z(\LMR_*^{\times})$ and the converse
is also true.  This defines an exact sequence
\begin{align*}
1\to Z(\LMR_*^{\times})\to \LMR_*^{\times}\times \Aut_{\LMR}(*)\overset{\hat{\mathfrak{G}}}{\to} \Aut_k(*).
\end{align*}
By Theorem~\ref{thm:aut-aut}, $\im \hat{\mathfrak{G}}\mathfrak{H}=\Inn(*)$.  So we complete the exact sequence
(E) by factoring through to the homomorphism
$\bar{\mathfrak{H}}:\Aut_k(*)\to \Out_k(\LMR_*)$. This completes the proof.
\end{proof}

%==============================================================================
\section{Condensation by Morita equivalence}\label{sec:Morita}
Having reduced the study of autotopisms and derivations to those which commute with $\LMR_*$ we now show how to shrink the
rings $\LMR_*$ to basic subalgebras and correspondingly shrink the original bimap.  Recall that in a ring $A$ an idempotent 
$e\in A$ is {\em full} if $A=AeA$.  We prove Theorem~\ref{thm:auto}(F) along with a version concerning derivations.

\begin{thm}\label{thm:condense}
For a full idempotent $e$ in $\LMR_*$, let $\cond{e}{*}:eUe\times eVe\bmto eWe$ be the restriction of
$*$.  It follows that there are naturally induced isomorphisms
\begin{align*}
	\Aut_{\LMR}(*) & \cong \Aut_{e\LMR e}\left(\cond{e}{*}\right) & \textnormal{and}\quad 
	\Der_{\LMR}(*) & \cong \Der_{e\LMR e}\left(\cond{e}{*}\right).
\end{align*}
\end{thm}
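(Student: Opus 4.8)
The plan is to realize this as an instance of classical Morita theory applied simultaneously to the three scalar rings, bootstrapped to the bimap level. Recall the setup: $\LMR_* = \LL{*}\oplus\MM{*}\oplus\RR{*}$ acts on $U$, $V$, $W$ so that $U$ is an $(\LL{*},\MM{*})$-bimodule, $V$ an $(\MM{*},\RR{*})$-bimodule, and $W$ an $(\LL{*},\RR{*})$-bimodule. A full idempotent $e = (e_{\LL{}}, e_{\MM{}}, e_{\RR{}})$ then gives full idempotents in each summand, and the classical Morita equivalence between $A$ and $eAe$ induced by a full idempotent is implemented by the bimodules $Ae$ and $eA$ with $Ae\otimes_{eAe} eA \cong A$ and $eA\otimes_A Ae \cong eAe$. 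So first I would record that, writing $L = \LL{*}$, etc., the maps $X \mapsto e_? X e_?$ on the module side are inverse (up to natural isomorphism) to $Y \mapsto (?e)\otimes Y \otimes (e?)$, and in particular $eUe$ is an $(e_{\LL{}}Le_{\LL{}}, e_{\MM{}}Me_{\MM{}})$-bimodule, and so on, with $U \cong Le\otimes_{eLe} eUe \otimes_{e\MM{}\MM{}} \MM{}e$ as bimodules. Fullness is exactly what makes these unit/counit maps isomorphisms.

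Next I would check that the bimap $*$ itself condenses compatibly. The restriction $\cond{e}{*}: eUe\times eVe \bmto eWe$ is well-defined because for $\lambda\in L$ etc. one has $e_{\LL{}}(u*v) = (e_{\LL{}}u)*v$ and similarly on the right, so $e_{\LL{}}(e_{\MM{}}^{-1}\text{-stuff})$... more carefully: $(e_{\LL{}} u e_{\MM{}}) * (e_{\MM{}} v e_{\RR{}})$ — using mid-linearity $e_{\MM{}}$ slides across $*$ — equals $e_{\LL{}}(u*v)e_{\RR{}}$ after absorbing the $e_{\MM{}}e_{\MM{}} = e_{\MM{}}$, so the product lands in $e_{\LL{}} W e_{\RR{}} = eWe$ and is unchanged by re-restriction. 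Conversely, the uncondensation: given the bimap $\cond{e}{*}$ together with its $e\LMR e$-action, the versor/tensor reconstruction $U = Le\otimes eUe\otimes \MM{}e$ carries a canonically induced bimap back to $*$; this is where I would invoke the universal properties of tensor and versor products (Theorem~\ref{thm:universal}) to see the reconstructed product agrees with $*$ on the nose.

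With the dictionary in place, the isomorphism $\Aut_{\LMR}(*)\cong \Aut_{e\LMR e}(\cond{e}{*})$ is then a formality in both directions. An $\LMR_*$-linear autotopism $\phi = (R^U_\phi, R^V_\phi; R^W_\phi)$ commutes with all of $L$, $\MM{}$, $\RR{}$, hence in particular with $e$, hence restricts to the $e$-fixed parts $eUe$, $eVe$, $eWe$; it is $e\LMR e$-linear there and it is an autotopism of $\cond{e}{*}$ since $*$ restricts to $\cond{e}{*}$. Conversely an $e\LMR e$-linear autotopism of $\cond{e}{*}$ extends uniquely via the Morita reconstruction $Le\otimes(-)\otimes\MM{}e$ (functoriality of $\otimes$ and versor products in their module arguments) to an $\LMR_*$-linear autotopism of the reconstructed bimap, which is $*$; and the two constructions are mutually inverse because the Morita unit/counit natural isomorphisms are. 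The derivation statement is proved by the identical argument: a $\delta\in\Der_{\LMR}(*)$ commutes with the $\LMR_*$-action, hence with $e$, hence preserves $eUe$ etc.\ (note $\delta$ is a triple of additive maps, and $X\delta\cdot e = Xe\cdot\delta$ since $e$ acts by a scalar $\delta$ commutes with), giving $e\delta e = \delta|_{e(-)e}\in\Der_{e\LMR e}(\cond{e}{*})$, and conversely derivations extend along the reconstruction functor since tensoring/homming with a fixed bimodule is additive and sends derivations to derivations; linearity of $\Delta$-type maps makes the bookkeeping automatic.

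The main obstacle I anticipate is purely organizational rather than conceptual: three rings acting on three modules with a compatibility constraint ($*$) means the Morita reconstruction isomorphisms must be checked to intertwine the product maps, i.e.\ one must verify the diagram
\begin{align*}
(Le\otimes eUe\otimes \MM{}e)\times(\MM{}e\otimes eVe\otimes e\RR{}) &\longrightarrow Le\otimes eWe\otimes e\RR{}
\end{align*}
induced from $\cond{e}{*}$ actually coincides, under the canonical identifications with $U\times V\bmto W$, with $*$ — and dually that restricting $*$ and then reconstructing returns $*$. This is exactly the kind of systematic definition-chasing flagged in Remark~\ref{rem:notation}, and the cleanest route is to phrase everything through the universal maps $\hat{*}, \vec{*}, \cev{*}$ of Theorem~\ref{thm:universal} so that ``the induced bimap'' is characterized by a universal property and uniqueness does the work, rather than grinding element-wise. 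Nondegeneracy of $*$ is used to ensure $U$, $V$, $W$ are honest bimodules (not just bimodules modulo a radical), which is what legitimizes the classical Morita machinery; I would note that at the outset.
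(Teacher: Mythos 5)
Your proposal follows essentially the same route as the paper: restrict an $\LMR$-linear autotopism (or derivation) to $eUe\times eVe\bmto eWe$ by compressing with $e$ and using $\LMR$-linearity to commute past the idempotent, then invert via the Morita reconstruction $\LL{}e\otimes(-)\otimes e\MM{}$ (etc.), with fullness supplying the unit/counit isomorphisms. Aside from minor left/right bookkeeping slips in the placement of the idempotents (exactly the hazard flagged in Remark~\ref{rem:notation}), this matches the paper's argument.
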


To see the relevance to our problems on isomorphism consider the narrow application to matrix multiplication 
$*:\M_{a\times b}(k)\times \M_{b\times c}(k)\bmto \M_{a\times c}(k)$. The ring
$\LMR_*$ equals $\M_a(k)\oplus \M_b(k)\oplus\M_c(k)$ (Table~\ref{tab:universal-rings}).  
In particular we have a full primitive idempotent, e.g. $e=E_{11}\oplus E_{11}\oplus E_{11}$.  
The resulting condensation of each module $U=\M_{a\times b}(k)$, $V=\M_{b \times c}(k)$, and 
$W=\M_{a\times c}(k)$ is a single copy of $k$.  So $\cond{e}{*}$ maps
$k\times k\bmto k$ and is exactly the product of our field $k$.  The $k$-linear autotopisms
of multiplication in $k$ are just the required two copies of $k^{\times}\times k^{\times}$ given
in Theorem~\ref{thm:scalar-aut}.  By Theorem~\ref{thm:condense}, $\Aut_{\LMR}(*)\cong k^{\times}\times k^{\times}$ as well.

\subsection{Idempotents and direct decompositions}\label{sec:idempotents}

As might be expected, if a bimap is built in a natural way from a ring, module, or group, then a product
in that category will lead to one of the many products of bimaps.  Departure from $*$ to 
look at its surrounding associative rings is a remarkably powerful tool in this case 
used for example to prove results on central and direct products of groups
\citelist{\cite{Wilson:unique-cent}\cite{Wilson:RemakI}*{Section 6}}.
\medskip

To explain the relationship of scalar rings and decompositions, 
recall an element $e\neq 0$ in an associative unital ring $A$ is \emph{idempotent} if $e^2=e$.  
Idempotents $e$ and $f$ are \emph{orthogonal} if $ef=0=fe$.  A \emph{decomposition of $1$} is a set 
$\mathcal{E}$ of pairwise orthogonal idempotents that sum to $1$.  If $A=\End(U)$ and $e\in A$ then
$U=Ue\oplus U(1-e)$ and vice-versa, a direct decomposition of $U$ determines idempotents
in $\End(U)$.  That is how we can involve the scalar rings in the discussion of direct decompositions
of bimaps.  The archetype for this is the study of self-adjoint idempotents associated to 
nondegenerate bilinear form.  Applying this to general bimaps is done in work of 
Miyasnikov \cite{Myasnikov} and the author \citelist{\cite{Wilson:unique-cent}*{Section 4}\cite{Wilson:RemakI}*{Section 6}}.

\begin{prop}
Fix a $k$-bimap $*:U\times V\bmto W$.
\begin{enumerate}[(i)]
\item A decomposition $\mathcal{E}\subset \End(U_k)\times \End(W_k)$ of $1$ lies in 
$\LL{*}$ if, and only if,
\begin{align*}
	(\forall & e\in \mathcal{E}) & (eU) * V & \leq eW.
\end{align*}
In that case $*$ admits a left $\oplus$-decomposition
$\{ eU\times V\bmto eW : e\in \mathcal{E}\}$.

\item A decomposition $\mathcal{E}\subset \End(_k U)\times \End(V_k)$ of $1$ lies in 
$\MM{*}$ if, and only if,
\begin{align*}
	(\forall & e\in \mathcal{E}) & (Ue) * ((1-e) V) & = 0.
\end{align*}
In that case $*$ admits a $\perp$-decomposition
$\{ Ue\times eV\bmto W : e\in \mathcal{E}\}$.

\item A decomposition $\mathcal{E}\subset \End(_k V)\times \End(_k W)$ of $1$ lies in 
$\RR{*}$ if, and only if,
\begin{align*}
	(\forall & e\in \mathcal{E}) & U*(Ve) & \leq We.
\end{align*}
In that case $*$ admits a right $\oplus$-decomposition
$\{ U\times Ve\bmto We : e\in \mathcal{E}\}$.
\end{enumerate}
\end{prop}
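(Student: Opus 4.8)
The plan is to treat the three parts symmetrically: each asserts that a decomposition $\mathcal{E}$ of $1$ inside the relevant endomorphism ring lands inside the corresponding scalar ring precisely when a single ``block-triangularity'' condition on $*$ holds, and that the resulting idempotents carve $*$ into a $\oplus$- or $\perp$-decomposition. Since the three statements are the same after applying swap $\tilde{*}$ (which interchanges $\LL{*}$ and $\RR{*}$ and fixes $\MM{*}$) and relabeling, I would prove (i) in detail and then indicate that (ii) and (iii) follow by the same bookkeeping. Throughout I would lean on the three-pile-shuffle philosophy of Section~\ref{sec:rings}: the content is entirely a matter of arranging the definitions of $\LL{*}$, $\MM{*}$, $\RR{*}$ against the definition of an idempotent and then reading off the geometry.

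For part (i): write $e\in\mathcal{E}$ as the pair $(R_e^U,R_e^W)$ acting on the right of $U$ and $W$. The condition $\mathcal{E}\subseteq\LL{*}$ means $(eu)*v=e(u*v)$ for all $u,v$, i.e. $uR_e^U*v=(u*v)R_e^W$. First I would show this forces $(eU)*V\le eW$: indeed $(eu)*v=e(u*v)\in eW$. Conversely, suppose $(eU)*V\le eW$ for every $e\in\mathcal{E}$. Since $\sum_{e\in\mathcal{E}}e=1$ and the idempotents are orthogonal, for fixed $u$ the element $u*v$ decomposes as $\sum_e (u*v)R_e^W$, while $u=\sum_e uR_e^U$ gives $u*v=\sum_e (uR_e^U)*v$; the hypothesis places the $e$-th summand $(uR_e^U)*v$ inside $eW$, and orthogonality of the $e$'s (so $eW\cap\sum_{f\neq e}fW=0$, a genuine direct sum of $W$) lets me match summands and conclude $(uR_e^U)*v=(u*v)R_e^W$, which is exactly $e\in\LL{*}$. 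The ``in that case'' clause is then immediate: each $e$ being a left scalar idempotent splits $U=\bigoplus_{e}eU$, each $W=\bigoplus_e eW$, and $eU\times V\bmto eW$ is a well-defined restriction because $(eU)*V\le eW$; these restrictions reassemble to $*$, which is the meaning of a left $\oplus$-decomposition.

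Part (ii) is the orthogonality/$\perp$ version: here $e=(R_e^U,L_e^V)$ acts on the right of $U$ and the left of $V$, and $e\in\MM{*}$ says $(ue)*v=u*(ev)$. The condition to extract is $(Ue)*((1-e)V)=0$: from $(ue)*((1-e)v)=u*(e(1-e)v)=u*(0)=0$ using $e(1-e)=0$. Conversely, given $(Ue)*((1-e)V)=0$ for all $e$, I would expand $v=ev+(1-e)v$ and $u=ue+u(1-e)$, compute $(ue)*v$ and $u*(ev)$, and use the vanishing of the ``cross'' terms together with orthogonality of $\mathcal{E}$ to collapse both to the same quantity, giving $e\in\MM{*}$; the induced decomposition is then $\{Ue\times eV\bmto W\}$, a $\perp$-decomposition since distinct blocks annihilate each other. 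Part (iii) is (i) run through the swap, with $(L_e^V,R_e^W)$ acting on the left of $V$ and the right of $W$ and the condition $U*(Ve)\le We$.

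The only real subtlety — the step I expect to be the main obstacle — is justifying that the hypothesis lets me ``match summands,'' i.e. that an idempotent decomposition $\mathcal{E}$ of $1$ in $\End(W_k)$ yields an internal direct sum $W=\bigoplus_{e\in\mathcal{E}}eW$ so that a relation $\sum_e x_e=\sum_e y_e$ with $x_e,y_e\in eW$ forces $x_e=y_e$. This is the standard Peirce decomposition fact and I would invoke it in the form already recalled in Section~\ref{sec:idempotents} (``if $A=\End(U)$ and $e\in A$ then $U=Ue\oplus U(1-e)$''), iterated over the finite set $\mathcal{E}$; once that is in hand, every other line is a one-step shuffle. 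I would also note that nothing here uses full nondegeneracy of $*$ — the proposition is stated for an arbitrary $k$-bimap — so no appeal to the radicals is needed.
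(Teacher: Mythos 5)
Your proof is correct and is essentially the paper's argument: the forward direction is the one-line observation $(eu)*v=e(u*v)\in eW$, and the converse amounts to projecting the identity $u*v=\sum_{e}(eu)*v$ onto the Peirce components of $W$ (the paper phrases this with a single $e$ and $1-e$ and applies $e$ to both sides, which is the same as your component-matching in $W=\bigoplus_e eW$); parts (ii) and (iii) are handled by the same bookkeeping in both treatments. The only quibble is notational: for part (i) the elements of $\LL{*}\subseteq\End(U_k)\times\End(W_k)$ act on the \emph{left} (the paper writes $(L_e^U,L_e^W)$), not as $(R_e^U,R_e^W)$, though this does not affect the mathematics.
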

\begin{proof}
We prove (i). Fix $e\in \End(U_K)\oplus \End(W_K)$. 

Suppose that $U=eU\oplus (1-e)U$ and $W=eW\oplus (1-e)W$ 
such that $eU*V\leq eW$ and $(1-e)U*V\leq (1-e)W$.
For every $u\in U$ and $v\in v$, $(eu)*v \in (eU)*V\leq eW$ and so $e((eu)*v)=(eu)*v$.
Likewise $((1-e)u)*v\in (1-e)W$ which implies $e((1-e)u)*v=0$. Furthermore, $e(u*v)=e((eu)*v)+e((1-e)u)*v)=(eu)*v$.
That is, $(L_e^U, L_e^W)\in \LL{*}$.

Conversely, if $e\in \LL{*}$ is an idempotent.  Then $(eu)*v=e(u*v)$ so $(eU)*V\leq eW$ and likewise $((1-e)U)*V\leq (1-e)W$.

A proof of (iii) is similar, a proof of (ii) can also be adapted from this argument; cf. \cite{Wilson:unique-cent}*{Proposition 4.7}.
\end{proof}

\begin{coro}
A bimap $*$ is left, mid, or right indecomposable if, and only if, $\LL{*}$, $\MM{*}$, resp. $\RR{*}$ is a local ring.
\end{coro}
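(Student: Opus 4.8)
The plan is to deduce this corollary directly from the preceding Proposition on decompositions of $1$, together with the elementary characterization of local rings in terms of idempotents. The dictionary between idempotents in the scalar rings and $\oplus$/$\perp$-decompositions of $*$ is already in hand; all that is needed is a bit of ring theory on the other side.

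First I would record the standard fact that for a unital associative ring $A$, locality is equivalent to the absence of idempotents other than $0$ and $1$ \emph{provided $A$ is, say, of finite length as it is in all cases of interest here} (e.g. $A$ acts faithfully on a finite-length module over $k$, as $\LL{*}$, $\MM{*}$, $\RR{*}$ do on $U$, $V$, $W$). One direction is immediate: in a local ring an idempotent $e$ has $e$ or $1-e$ a unit, and a unit idempotent is $1$, so $e\in\{0,1\}$. For the converse one invokes Fitting's lemma: for $\varphi\in\End(U)$ with $U$ of finite length, $U=\ker\varphi^n\oplus\im\varphi^n$ for $n\gg 0$; if $\End(U)$ has only the trivial idempotents this forces every $\varphi$ to be nilpotent or invertible, so the non-units form an ideal and $\End(U)$, hence every subring acting faithfully on $U$, is local. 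Applied to $U$, $V$, $W$ this covers $\LL{*}$, $\MM{*}$, and $\RR{*}$.

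Then the corollary is a translation. For the left case: a nontrivial left $\oplus$-decomposition of $*$ corresponds, by part (i) of the Proposition, to a decomposition $\mathcal{E}$ of $1$ inside $\LL{*}$ with $|\mathcal{E}|\ge 2$; since $1\in\LL{*}$, any nontrivial idempotent $e\in\LL{*}$ yields the decomposition $\{e,1-e\}$ and conversely such a decomposition contains a nontrivial idempotent. Hence $*$ is left indecomposable iff $\LL{*}$ has only the idempotents $0,1$ iff $\LL{*}$ is local. The mid and right cases are word-for-word the same using parts (ii) and (iii) of the Proposition and the rings $\MM{*}$, $\RR{*}$.

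The only step requiring genuine care is the passage from ``no nontrivial idempotents'' to ``local'' in the converse direction: this implication fails for arbitrary unital rings (for instance $\mathbb{Z}$), so it must lean on the finiteness of $U$, $V$, $W$ over $k$ via Fitting's lemma. The idempotent-versus-decomposition correspondence from the Proposition is otherwise purely formal, so this ring-theoretic point is where the real content sits.
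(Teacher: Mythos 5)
Your overall plan is the paper's: the corollary is stated as an immediate translation of the preceding proposition, with a nontrivial left (mid, right) decomposition of $*$ corresponding to a nontrivial idempotent decomposition $\{e,1-e\}$ of $1$ in $\LL{*}$ (resp. $\MM{*}$, $\RR{*}$), and ``only trivial idempotents'' then being read as ``local.'' You are also right to flag that this last reading is not free: it fails for $\mathbb{Z}$, so the corollary tacitly assumes a finiteness hypothesis (finite length of $U$, $V$, $W$, equivalently Artinian scalar rings) that the paper leaves implicit. Identifying that as the only point of real content is correct.

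However, your converse direction contains a misstep as written. You apply the no-nontrivial-idempotents hypothesis to $\End(U)$ and then assert that ``every subring acting faithfully on $U$'' is therefore local. Neither half works: the hypothesis concerns the subring $\LL{*}\subseteq \End(U_k)\times\End(W_k)$, not $\End(U)$ (which will typically have many idempotents even when $\LL{*}$ has none), and subrings of local rings need not be local ($\mathbb{Z}\subseteq\mathbb{Q}$). The repair is to run Fitting \emph{inside} the scalar ring: for $\lambda\in\LL{*}$ acting on the finite-length module $U\oplus W$, the Fitting projection lies in the commutative Artinian subalgebra $k[\lambda]\subseteq\LL{*}$ (write the minimal polynomial as $x^e g(x)$ with $g(0)\neq 0$ and use the Chinese Remainder Theorem), so triviality of idempotents in $\LL{*}$ forces every $\lambda$ to be nilpotent or invertible with inverse in $k[\lambda]\subseteq\LL{*}$; the standard argument that a nilpotent cannot be one-sided invertible and that $1-a$ is a unit for $a$ nilpotent then shows the non-units form an ideal, i.e. $\LL{*}$ is local. (Equivalently: $\LL{*}$ is Artinian, idempotents lift modulo the Jacobson radical, and a semisimple ring with only trivial idempotents is a division ring.) With that local correction your translation via parts (i)--(iii) of the proposition is exactly the intended argument.
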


Arguing with the centroid we arrive at the following.

\begin{prop}[Myasnikov \cite{Myasnikov}*{Proposition 3.1}]
A $k$-bilinear map $*:U\times V\bmto W$ admits a $\oplus$-decomposition $\{Ue\times V\bmto We : e\in \mathcal{E}\}$
for a decomposition $\mathcal{E}\subset \End({_k U}_k)\times \End({_k V}_k)\times \End({_k W}_k)$ of $1$ if, and only if,
$e\in \CC{*}$.
\end{prop}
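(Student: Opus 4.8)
The plan is to prove both implications by direct computation from the definition of the centroid, the only bookkeeping being the correspondence between a decomposition of $1$ in the triple endomorphism ring and a triple of synchronized internal direct sums. Write each $e\in\mathcal{E}$ as a triple $(e_U,e_V,e_W)\in\End({_k U}_k)\times\End({_k V}_k)\times\End({_k W}_k)$ of $k$-bilinear idempotents and, since operators act on the right, set $Ue=Ue_U$, $Ve=Ve_V$, $We=We_W$. That $\mathcal{E}$ is a decomposition of $1$ is precisely the data of three internal direct sums $U=\bigoplus_{e\in\mathcal{E}}Ue$, $V=\bigoplus_{e\in\mathcal{E}}Ve$, $W=\bigoplus_{e\in\mathcal{E}}We$ indexed compatibly by $\mathcal{E}$, and the asserted $\oplus$-decomposition of $*$ is the assertion that $*$ is the direct sum of its restrictions $Ue\times Ve\bmto We$, i.e. that $Uf*Vg=0$ whenever $f\neq g$ and $Ue*Ve\leq We$ for every $e$. (Throughout I read ``$e\in\CC{*}$'' as ``$\mathcal{E}\subseteq\CC{*}$''.)

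For the implication $(\Leftarrow)$, assume $\mathcal{E}\subseteq\CC{*}$, so $(ue)*v=(u*v)e=u*(ve)$ for all $e\in\mathcal{E}$, $u\in U$, $v\in V$. Then for $f\neq g$ one computes $(uf)*(vg)=((uf)*v)g=((u*v)f)g=(u*v)(fg)=0$, and $(ue)*(ve)=((u*v)e)e=(u*v)e\in We$. Hence the three direct sums coming from $\mathcal{E}$ do assemble into a $\oplus$-decomposition of $*$, with $e$-th summand the restriction $Ue\times Ve\bmto We$.

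For $(\Rightarrow)$, assume $\mathcal{E}$ induces a $\oplus$-decomposition of $*$. Fix $e\in\mathcal{E}$, $u\in U$, $v\in V$ and expand $u=\sum_f uf$, $v=\sum_g vg$ along the decompositions (finite sums). Since $Uf*Vg=0$ for $f\neq g$, we get $u*v=\sum_{f,g}(uf)*(vg)=\sum_f(uf)*(vf)$, and as $(uf)*(vf)\in Wf$ this is the expansion of $u*v$ along $W=\bigoplus_f Wf$; in particular its $We$-component is $(ue)*(ve)$, i.e. $(u*v)e=(ue)*(ve)$. The same orthogonality gives $(ue)*v=\sum_g(ue)*(vg)=(ue)*(ve)$ and $u*(ve)=\sum_f(uf)*(ve)=(ue)*(ve)$. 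Combining the three, $(ue)*v=(u*v)e=u*(ve)$ for all $u,v$, which is exactly $e\in\CC{*}$.

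I do not expect a real obstacle. The two points deserving a line of care are: making the equivalence between ``decomposition of $1$ in $\End({_k U}_k)\times\End({_k V}_k)\times\End({_k W}_k)$'' and ``triple of synchronized internal direct sums'' precise, and noting that the expansions $u=\sum_f uf$ are finite (automatic for finite $\mathcal{E}$, and otherwise built into the notion of a decomposition of $1$); and observing that the $k$-bilinearity hypothesis on $\mathcal{E}$ is exactly what makes each $Ue$, $Ve$, $We$ a $k$-bimodule, so that the restrictions $Ue\times Ve\bmto We$ are genuine $k$-bimaps — it is otherwise inert in the argument. The same computation, dropping the hypothesis on one component of each $e$ and weakening ``$\oplus$-decomposition'' to the corresponding one-sided notion, re-proves the left, mid, and right statements of the preceding proposition with $\LL{*}$, $\MM{*}$, $\RR{*}$ in place of $\CC{*}$.
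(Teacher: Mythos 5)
Your proof is correct; the paper itself supplies no argument for this proposition (it is cited to Myasnikov, with only the remark that one ``argues with the centroid'' as in the preceding proposition on $\LL{*}$, $\MM{*}$, $\RR{*}$), and your direct two-way verification is exactly that intended adaptation. Your reading of the decomposition $\{Ue\times V\bmto We\}$ as $\{Ue\times Ve\bmto We\}$ is the right one: since $\mathcal{E}$ lives in $\End({_k U}_k)\times \End({_k V}_k)\times \End({_k W}_k)$, the $V$-component of $e$ must enter the decomposition, and without it the ``only if'' direction (recovering $u*(ve)=(u*v)e$) would not follow.
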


\subsection{Change of scalars}\label{sec:change-scalars}

We now explore a more constrained role for idempotents with stronger connection to our main results.
We begin with a natural construction of bimaps from old ones.  Fix an $\LMR$-bimap $*:U\times V\bmto W$.  
\medskip

For every $(\LL{}',\LL{})$-bimodule $X$ we can define an
$(\LL{}',\MM{},\RR{})$-bimap $(X\otimes *): X\otimes_{\LL{}} U\times V\bmto X\otimes_{\LL{}} W$
where
\begin{align*}
		\left(\sum_i x_i\otimes u_i\right)(X\otimes *) v & = \sum_i x_i\otimes (u_i* v).
\end{align*}
This extends to a covariant functor from the homotopism category of left $\LL{}$-linear bimaps to 
that of left $\LL{}'$-linear bimaps.  Specifically a homotopism $(f,g;h)$ becomes $(1_X\otimes f,g;1_X\otimes h)$.

Similarly, given an $(\LL{},\LL{}')$-bimodule $X'$,  define 
$(X'\lversor *):X'\lversor_{\LL{}} U\times V\bmto X'\lversor_{\LL{}} W$,
\begin{align*}
	(\forall & \phi:X'\to U,\forall v\in V) & 
		\phi (X'\lversor *) v & : x\mapsto (x\phi*v).
\end{align*}
This is an $({\LL{}'}^{\circ},\MM{},\RR{})$-bimap.  We have elected here to define these
bimaps according to the fixed representations of tensor and versor products so that the products
are given explicitly.  It is possible to give
definitions that rely solely on the universal properties of tensor and versor products instead.

The preceding discussion adapts to explain the meaning of $*\otimes Y$, for $Y={_{\RR{}} Y_{\RR{}'}}$, 
and $*\rversor Y'$ for $Y'=*\rversor {_{\RR{}'}Y_{\RR{}}}$.  The mid variations take a mixed from.  
Here we have an $(\MM{},\MM{}')$-bimodule $Z$ and define an $(\LL{},\MM{}',\RR{})$-bimap
\begin{align*}
U\otimes_{\MM{}} Z\times Z\lversor_{\MM{}} V & \bmto W\\
\left(\sum_i u_i \otimes z_i, \phi:Z\to V\right) & \mapsto \sum_i u_i\otimes (z_i \phi).
\end{align*}
Likewise each $(\MM{}',\MM{})$-bimodule $Z'$ affords 
$U\rversor_{\MM{}} Z'\times Z'\otimes_{\MM{}} V\bmto W$.

As with our first examples, these all extend to a covariant functors.  Though it is easiest to explain these
individually they can be used in conjunction as well.  For example, if we begin with the product 
$*:A\times A\bmto A$ of a unital associative ring $A$ then $\LL{*}\cong \MM{*}\cong\RR{*}\cong A$.
Then we can tensor the left by $A^{a\circ}=A^a\lversor A$, i.e. $1\times a$ column vectors, 
tensor the right by $A^{c\circ}$, and in the middle use $A^{b}$.  We so obtain
a the familiar bimap of matrix multiplication over $A$, i.e.
\begin{align}\label{eq:inflate}
	\M_{a\times b}(A)\times \M_{b\times c}(A)\bmto \M_{a\times c}(A).
\end{align}
This is seen naturally as an $(\M_a(A),\M_b(A),\M_c(A))$-bimap.  We might instead consider condensing this
product into something smaller.  By tensoring with $A^{a}$, $A^{b\circ}$, and $A^{c}$ we return from 
\eqref{eq:inflate} to $A\times A\bmto A$.  Such a change in bimaps mimics our experience with Morita
equivalence, and the use of basic rings in representations of finite-dimensional algebras.

\subsection{Condensation}\label{sec:condensation}
We have seen in \eqref{eq:inflate} how to inflate and condense the product of an associative unital ring into
matrix multiplication of a general sort.  Using the case of $a=b=c$ this is nothing more than the standard example
of $A$ being Morita equivalent to $\M_a(A)$.  We now want such a result in general bimaps.  In particular
we want to consider the group $\Aut_{\LMR}(*:U\times V\bmto W)$ in relation $\Aut(eUe\times eVe\bmto eWe)$
for idempotents $e\in \LMR$. This will permit us to mimic the role that basic subalgebras play in representation theory.
If nothing else we get  smaller rings and modules to study.

We invoke the well-known Morita equivalence theorem, cf. \cite{Rowen:II}*{Theorem 25A.19}.
\begin{thm}[Morita]
Two rings $\LL{}$, $\LL{}'$ have equivalent module categories if, and only if, there is an $\LL{}$-progenerator
$P={_{\LL{}} P}$ with $\LL{}'\cong \End(_{\LL{}} P)$.  In particular an equivalence is afforded by
the pair $P^{\circ}\otimes_{\LL{}} (-)$ and $P\otimes_{\LL{}'} (-)$, where $P^\circ=(P\;{_{\LL{}}\lversor}\; \LL{})$.
\end{thm}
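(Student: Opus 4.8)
The plan is to prove both implications by encoding the data in the two canonical pairings attached to $P$ and then reading off exactly when they are invertible. Regard $P$ as an $(\LL{},\LL{}')$-bimodule, with $\LL{}'=\End({}_{\LL{}}P)$ acting on the right of $P$ (opposite to the left $\LL{}$-scalars, following the convention of \secref{sec:prelims}), and give $P^{\circ}=P\;{_{\LL{}}\lversor}\;\LL{}=\hom({}_{\LL{}}P,\,{}_{\LL{}}\LL{})$ the dual $(\LL{}',\LL{})$-bimodule structure. The two pairings are the trace map $\tau: P\otimes_{\LL{}'}P^{\circ}\to\LL{}$, $p\otimes f\mapsto f(p)$, an $\LL{}$-bimodule homomorphism, and the coevaluation map $\mu: P^{\circ}\otimes_{\LL{}}P\to\LL{}'$, $f\otimes p\mapsto(x\mapsto f(x)p)$, an $\LL{}'$-bimodule homomorphism. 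A direct check gives the associativity relations $\tau(p\otimes f)\,q=p\cdot\mu(f\otimes q)$ and $f\cdot\tau(p\otimes g)=\mu(f\otimes p)\cdot g$, which supply all the compatibility the later steps require.

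For sufficiency I would first isolate the two facts that do the work. The coevaluation $\mu$ is an isomorphism exactly because $P$ is finitely generated projective: the dual-basis lemma furnishes $p_i\in P$ and $f_i\in P^{\circ}$ with $x=\sum_i f_i(x)p_i$ for all $x$, and then $\phi\mapsto\sum_i f_i\otimes(p_i\phi)$ inverts $\mu$. The trace map $\tau$ is surjective exactly because $P$ is a generator, its image being the trace ideal $\sum_{f\in P^{\circ}}f(P)$; and here surjectivity already forces bijectivity, since a relation $1=\sum_j f_j(p_j)$ together with the associativity relations produces a two-sided inverse of $\tau$. Granting these, associativity of tensor products gives natural isomorphisms of functors $(P\otimes_{\LL{}'}(-))\circ(P^{\circ}\otimes_{\LL{}}(-))\cong(P\otimes_{\LL{}'}P^{\circ})\otimes_{\LL{}}(-)\xrightarrow{\tau}\LL{}\otimes_{\LL{}}(-)\cong\mathrm{id}$ and, symmetrically, $(P^{\circ}\otimes_{\LL{}}(-))\circ(P\otimes_{\LL{}'}(-))\xrightarrow{\mu}\mathrm{id}$, so $P^{\circ}\otimes_{\LL{}}(-)$ and $P\otimes_{\LL{}'}(-)$ are mutually inverse equivalences.

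For necessity I would argue that an equivalence $G: \LL{}'\textnormal{-Mod}\to\LL{}\textnormal{-Mod}$ with quasi-inverse $F$ transports each defining property of a progenerator, since ``finitely generated'', ``projective'', and ``generator'' are each expressible purely through coproducts, epimorphisms, and the $\hom$ bifunctor, all of which an equivalence preserves. Hence $P:=G({}_{\LL{}'}\LL{}')$ is an $\LL{}$-progenerator because ${}_{\LL{}'}\LL{}'$ is an $\LL{}'$-progenerator. Full faithfulness of $G$ then yields a ring isomorphism $\End({}_{\LL{}}P)\cong\End({}_{\LL{}'}\LL{}')$, and the regular representation identifies the latter with $\LL{}'$ itself --- rather than its opposite --- precisely because we read endomorphisms opposite to scalars. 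This recovers the pair $(P,\ \LL{}'\cong\End({}_{\LL{}}P))$ asserted by the theorem.

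The main difficulty is bookkeeping rather than depth: one must keep the left and right bimodule structures on $P$ and $P^{\circ}$ consistent with the ``operators act opposite to scalars'' convention while checking that $\tau$ and $\mu$ are honest bimodule maps and that the functorial isomorphisms above are natural and compatible with the ring actions. The one genuinely non-formal point is the injectivity half of the trace-map fact --- that surjectivity of $\tau$ already makes it invertible --- which I would settle by the standard Morita-context computation, using the generating relation $1=\sum_j f_j(p_j)$ and the two associativity relations to exhibit the inverse explicitly.
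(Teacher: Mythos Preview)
Your proof outline is the standard Morita-context argument and is correct in substance. However, there is nothing to compare it against: the paper does not prove this theorem. It is stated as background, with the preamble ``We invoke the well-known Morita equivalence theorem, cf.~\cite{Rowen:II}*{Theorem 25A.19},'' and no proof is given. The paper simply uses the result as a black box in order to establish the subsequent Proposition and Theorem~\ref{thm:condense}.

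So your write-up is not a reconstruction of the paper's argument but rather a self-contained proof of a classical result the author chose to cite. If the goal is to mirror the paper, the appropriate response is simply to note that the theorem is quoted from the literature without proof.
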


We note that our convention of evaluating opposite scalars means that in our statement above $P$ is an
$(\LL{},\LL{}')$-bimodule, without appeal to $op$-notation as is common in some treatments of Morita equivalence.

\begin{prop}
Given a Morita equivalence between $\LL{}$ and $\LL{}'$ afforded by an $\LL{}$-progenerator $P$, 
every left $\LL{}$-bimap $*:U\times V\bmto W$ is naturally isotopic to the induced bimap 
$P \otimes P^{\circ}\otimes U\times V\bmto P\otimes P^{\circ}\otimes W$. In fact the isotopism is the
identity on $V$.
\end{prop}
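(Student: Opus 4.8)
The plan is to exhibit an explicit isotopism between $*:U\times V\bmto W$ and the induced bimap $P\otimes_{\LL{}'} P^{\circ}\otimes_{\LL{}} U\times V\bmto P\otimes_{\LL{}'} P^{\circ}\otimes_{\LL{}} W$, using the unit/counit of the Morita equivalence. Recall that for the equivalence afforded by the progenerator $P={_{\LL{}}P_{\LL{}'}}$ and its dual $P^{\circ}=(P\;{_{\LL{}}\lversor}\;\LL{})={_{\LL{}'}(P^{\circ})_{\LL{}}}$, there is a natural $\LL{}$-bimodule isomorphism $\varepsilon_M\colon P\otimes_{\LL{}'} P^{\circ}\otimes_{\LL{}} M\xrightarrow{\;\sim\;} M$ for every left $\LL{}$-module $M$ (the counit; concretely $p\otimes f\otimes m\mapsto (pf)m$ after identifying $P\otimes_{\LL{}'}P^{\circ}\cong\LL{}$). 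First I would set $R^U=\varepsilon_U$, $R^V=1_V$, and $R^W=\varepsilon_W$, and check the homotopism identity \eqref{def:homotopism}.

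The verification is a short computation with elementary tensors. By the functoriality described in Section~\ref{sec:change-scalars}, the induced bimap sends $\bigl(\sum_i p_i\otimes f_i\otimes u_i\bigr)$ and $v$ to $\sum_i p_i\otimes f_i\otimes(u_i*v)$; applying $\varepsilon_W$ to this gives $\sum_i (p_if_i)(u_i*v)$, while applying $\varepsilon_U$ first and then $*$ gives $\bigl(\sum_i(p_if_i)u_i\bigr)*v=\sum_i\bigl((p_if_i)u_i\bigr)*v$. These agree precisely because $*$ is left $\LL{}$-linear: $((p_if_i)u_i)*v=(p_if_i)(u_i*v)$ since $p_if_i\in\LL{}$. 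So $(\varepsilon_U,1_V;\varepsilon_W)$ is a homotopism, and since $\varepsilon_U$ and $\varepsilon_W$ are isomorphisms of $\LL{}$-modules (hence in particular of $k$-modules), it is an isotopism. That $R^V=1_V$ is immediate from the construction, giving the last sentence of the claim.

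The only genuine content beyond bookkeeping is that $\varepsilon$ is natural and that $P\otimes_{\LL{}'}P^{\circ}\cong\LL{}$ as $\LL{}$-bimodules — but this is exactly the Morita theorem quoted just above, so I would simply cite it. The main obstacle, such as it is, is notational: one must be careful that with the paper's convention of evaluating operators opposite to scalars, $P$ is an honest $(\LL{},\LL{}')$-bimodule (as remarked after the statement of Morita's theorem), so that the tensor products $P\otimes_{\LL{}'}(-)$ and $P^{\circ}\otimes_{\LL{}}(-)$ compose correctly over the right sides and the counit lands in the category of left $\LL{}$-modules. Once the bimodule structures are pinned down, everything reduces to the two-line check above. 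I would also note in passing that the analogous statements for the mid and right scalar rings follow by applying the same argument after a swap $\tilde{*}$ or the obvious variant, so the single left-sided proof suffices.
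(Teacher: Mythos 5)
Your proposal is correct and is essentially the paper's own proof: the paper also uses the natural isomorphism $\tau_M:P\otimes P^{\circ}\otimes M\to M$, $\sum_i p_i\otimes f_i\otimes m_i\mapsto\sum_i(p_if_i)m_i$, forms the triple $(\tau_U,1_V;\tau_W)$, and verifies the homotopism identity by the same one-line computation on elementary tensors using left $\LL{}$-linearity of $*$. Your explicit observation that the key equality $((p_if_i)u_i)*v=(p_if_i)(u_i*v)$ is exactly where the left-linearity hypothesis enters is a welcome clarification, but it does not change the argument.
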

\begin{proof}
By Morita's theorem, for each $U={_{\LL{}} U}$, there are natural isomorphisms $\tau_U:P\otimes P^{\circ}\otimes U\to U$, specifically
\begin{align*}
	\sum_i p_{i}\otimes f_i\otimes u_i & \mapsto \sum_i (p_i f_i) u_i.
\end{align*}
We verify $(\tau_U,1_V;\tau_W)$ is an isotopism from 
$\#:P\otimes (P^{\circ}\otimes U)\times V\bmto P\otimes (P^{\circ}\otimes W)$ to
$*:U\times V\bmto W$.
\begin{align*}
	\left(\sum_{i} p_i \otimes \left(\sum_j f_{ij} \otimes u_{ij}\right)\right)\tau_U *v 
		& = \sum_{ij} p_i f_{ij} u_{ij} *v 
		 = \left(\sum_{ij} p_i\otimes f_{ij}\otimes (u_{ij} *v)\right)\tau_W\\
		& = \left(\sum_i p_i\otimes \left(\sum_j f_{ij}\otimes u_{ij}\right)\right)\#^{\tau_W} v. 
\end{align*}
\end{proof}

The usual occasion for such equivalences is to specify an idempotent $e\in \LL{}$ which is \emph{full} in that $\LL{}=\LL{} e\LL{}$.
We obtain a Morita equivalence, i.e. an equivalence of module categories, between $e\LL{}e$ and $\LL{}$.
Specifically, $P=\LL{}e$ and $P^{\circ}\cong e\LL{}$ so that $P^{\circ}\otimes_{\LL{}} U=e\LL{}\otimes_{\LL{}} U\cong eU$ 
and $P\otimes_{e\LL{} e} eU \cong \LL{}e\otimes_{e\LL{} e} eU\cong \LL{}e\LL{}U=U$.
More generally, for a full idempotent $e\in \LMR$, we get a \emph{condensed}
$e\LMR e$-bimap $eUe\times eVe\bmto eWe$.

\begin{proof}[Proof of Theorem~\ref{thm:auto}(F) \& Theorem~\ref{thm:condense}]
Let $\phi\in \Aut_{\LMR}(*:U\times V\bmto W)$ and $e=((L_e^U,L_e^W), (R_e^U,L_e^V), (R_e^V,R_e^W) )$ be a full idempotent in $\LMR$.
Define
\begin{align*}
	\phi|_e & = \left((L_e^U \phi_U R_e^U)|_{eUe}, (L_e^V \phi_V R_e^V)|_{eVe}; (L_e^W \phi_W R_e^W)|_{eVe}\right).
\end{align*}
As $\phi$ is $\LMR$-linear, $L_e^U \phi_U R_e^U=L_e^U R_e^U\phi_U=\phi_U L_e^U R_e^U$, and likewise in the variables
$V$ and $W$.  Hence, $\phi|_e\in \Aut_{e\LMR e}(eUe\times eVe\bmto eWe)$.  Also, $(\phi\psi)|_e=\phi|_e\psi|_e$ as, for example,
\begin{align*}
	L_e^U \phi_U\psi_U R_e^U & = L_e^U L_e^U \phi_u \psi_u R_e^U R_e^U
		(L_e^U \phi_U R_e^U) (L_e^U \psi_U R_e^U).
\end{align*}
So $\phi\mapsto \phi|_e$ is a group homomorphism $\Aut_{\LMR}(*:U\times V\bmto W)$ to $\Aut_{e\LMR e}(eUe\times eVe\bmto eWe)$.
For the inverse homomorphism, take $\phi'\in \Aut_{e\LMR e} (eUe\times eVe \bmto eWe )$ and define
\begin{align*}
	\phi'|^e & = ( 1_{\LL{}e}\otimes \psi_{eUe}\otimes 1_{e\MM{}}, 
				1_{\MM{}e}\otimes \psi_{eVe}\otimes 1_{e\RR{}}; 
					1_{\LL{}e}\otimes \psi_{eWe}\otimes 1_{e\RR{}} ).
\end{align*}
Indeed, $(\phi'|^e)|_e=\phi'$ since, for instance in the $U$ variable, we find
\begin{align*}
	(L_e^U 1_{\LL{}e}\otimes \psi_{eUe}\otimes 1_{e\MM{}} R_e^U)|_{eUe} & = \psi_{eUe}.
\end{align*}

The proof of $\Der_{\LMR}(*)\cong \Der_{e\LMR e}(e*e)$ is analogous.
\end{proof}

%===========================================================================
\section{A theorem of Skolem-Noether type}\label{sec:universal}

We now apply the main results to universal bimaps (tensor and versor products)
and generalize the Skolem-Noether theorem from square matrix rings to arbitrary matrix multiplication.

\subsection{Tensor and versor products}
Here we consider the implications for the case of tensor and versor products.  

We start by explaining a Galois closure on scalar rings.  Originally discovered for
mid scalars in \cite{Wilson:division}*{Theorem 2.8}, it was proved fully in \cite{BW:autotopism}*{Section 2.2}.
Adapting those arguments to left and right scalars is possible but somewhat tangential; 
so, we give simply the result of the claims.  We start with the following definitions of closures.
\begin{align*}
	\bar{\LL{}}  & = \LL{_{\LL{}}\lversor} &
		\bar{\MM{}}  & = \MM{\otimes_{\MM{}}} &
			\bar{\RR{}}  & = \RR{_{\rversor\RR{}}}.
\end{align*}

\begin{prop}
\begin{align*}
	\bar{\bar{\LL{}}} & =\bar{\LL{}} & 
		\bar{\bar{\MM{}}} & =\bar{\MM{}} & 
			\bar{\bar{\RR{}}} & =\bar{\RR{}}.\\
	{_{\LL{}} \lversor} & ={_{\bar{\LL{}}}\lversor} &
		\otimes_{\MM{}} & =\otimes_{\bar{\MM{}}} & 
			\rversor_{\RR{}} & =\rversor_{\bar{\RR{}}}.
\end{align*}
In particular all versor and tensor products can be taken over closed scalar rings.
\end{prop}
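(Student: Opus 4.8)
The plan is to establish the three ``idempotency'' identities $\bar{\bar{\LL{}}}=\bar{\LL{}}$, $\bar{\bar{\MM{}}}=\bar{\MM{}}$, $\bar{\bar{\RR{}}}=\bar{\RR{}}$ together with the stabilization of the associated versor/tensor products, treating the left, mid, and right cases in parallel since they are interchanged by the swap $\tilde{*}$ and by passage to $op$-rings. I would only write out the left case in detail and remark that the other two follow by the same bookkeeping. The mid case $\bar{\MM{}}=\MM{\otimes_{\MM{}}}$ is precisely the Galois closure established in \cite{Wilson:division}*{Theorem 2.8} and \cite{BW:autotopism}*{Section 2.2}; the whole point of the preceding paragraph in the paper is that the left and right analogues were asserted there without proof, so what I am really proving is that the cited mid-scalar argument transports.

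First I would unwind the definitions. Given a bimap $*:U\times V\bmto W$ with left scalar ring $\LL{*}=\LL{}$, form the evaluation bimap ${_{\LL{}}\lversor}:U\times (U\,{_{\LL{}}\lversor}\,W)\bmto W$ and set $\bar{\LL{}}=\LL{{_{\LL{}}\lversor}}$, its left scalar ring. The containment $\LL{}\subseteq \bar{\LL{}}$ is immediate from Theorem~\ref{thm:universal} (the universality diagram in Figure~\ref{fig:scalar-diagram}): $*$ is an $\LL{}$-bimap, so $\vec{*}$ factors through the $\bar{\LL{}}$-relative versor product, meaning $\LL{}$ acts on ${_{\LL{}}\lversor}$ as left scalars. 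The content is the reverse: any $\lambda'$ acting as a left scalar on the evaluation bimap already lies in $\LL{}$, \emph{or at least} stabilizes the same versor product, so that a second closure adds nothing. Concretely, I would show that the canonical map $U\,{_{\LL{}}\lversor}\,W\to U\,{_{\bar{\LL{}}}}{\lversor}\,W$ is an isomorphism of $\bar{\LL{}}$-bimodules — it is injective because $\bar{\LL{}}\supseteq \LL{}$ forces $\bar{\LL{}}$-hom's to be $\LL{}$-hom's, and surjective because the evaluation pairing is nondegenerate on the $U$ side (or after replacing $*$ by its fully nondegenerate part; the excerpt has been assuming full nondegeneracy throughout Sections~\ref{sec:der}--\ref{sec:Morita}, so I would state that hypothesis). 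Once ${_{\LL{}}\lversor}={_{\bar{\LL{}}}\lversor}$ is in hand, applying the definition of closure one more time gives $\bar{\bar{\LL{}}}=\LL{{_{\bar{\LL{}}}\lversor}}=\LL{{_{\LL{}}\lversor}}=\bar{\LL{}}$, and likewise the product identity $\otimes_{\MM{}}=\otimes_{\bar{\MM{}}}$ etc.\ is exactly this intermediate step restated.

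The key steps in order: (1) reduce to the fully nondegenerate case and fix notation via Remark~\ref{rem:notation}; (2) prove $\LL{}\subseteq\bar{\LL{}}$ by invoking Theorem~\ref{thm:universal}; (3) prove every $\bar{\LL{}}$-linear map $U\to W$ is already $\LL{}$-linear after identifying $U$ with its image under $\vec{*}$ — this is the Galois-theoretic heart and is where I lean on the cited bidouble-centralizer argument of \cite{Wilson:division} and \cite{BW:autotopism}; (4) conclude ${_{\LL{}}\lversor}={_{\bar{\LL{}}}\lversor}$ as bimaps; (5) feed this back into the definition to get $\bar{\bar{\LL{}}}=\bar{\LL{}}$; (6) repeat verbatim-with-swaps for $\MM{}$ and $\RR{}$, noting that $\bar{\MM{}}$ is literally the statement of the cited theorem. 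I expect step (3) to be the main obstacle: it amounts to a double-centralizer / Galois-correspondence statement saying that $\bar{\LL{}}$ is the full bicommutant of $U$ inside the relevant endomorphism ring and that iterating the bicommutant operation is idempotent. This is not a formal nonsense fact — it genuinely uses that ${_{\LL{}}\lversor}$ builds a faithful enough module — which is exactly why the paper quarantines it to a citation rather than reproving it, and I would do the same, presenting steps (1)--(2) and (4)--(6) as a short verification and black-boxing (3).
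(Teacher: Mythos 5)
The paper itself offers no proof of this proposition: it is stated bare, with the preceding sentence explicitly deferring to \cite{Wilson:division}*{Theorem 2.8} and \cite{BW:autotopism}*{Section 2.2} for the mid case and asserting the left/right cases are routine adaptations. So your outline --- containment $\LL{}\subseteq\bar{\LL{}}$, then ${_{\LL{}}\lversor}={_{\bar{\LL{}}}\lversor}$, then idempotency --- is the right skeleton and is at least as explicit as what the paper provides. The problem is with your step (3), which you call the ``Galois-theoretic heart'' and propose to black-box. As stated (``every $\bar{\LL{}}$-linear map $U\to W$ is already $\LL{}$-linear'') it is the \emph{trivial} direction: since $\LL{}$ maps into $\bar{\LL{}}$, restriction of scalars gives $\hom({_{\bar{\LL{}}}U},{_{\bar{\LL{}}}W})\subseteq\hom({_{\LL{}}U},{_{\LL{}}W})$ for free, and proving only this does not yield ${_{\LL{}}\lversor}={_{\bar{\LL{}}}\lversor}$. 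What you actually need is the converse inclusion, that every $\LL{}$-linear $f:U\to W$ is $\bar{\LL{}}$-linear --- and that is not a double-centralizer theorem, does not need full nondegeneracy or faithfulness, and needs no citation: it is the definition of $\bar{\LL{}}=\LL{_{\LL{}}\lversor}$ unwound. An element $\lambda\in\bar{\LL{}}$ is by definition a pair satisfying $(\lambda u)\lversor f=\lambda(u\lversor f)$ for every $u\in U$ and every $f\in U\,{_{\LL{}}\lversor}\,W=\hom({_{\LL{}}U},{_{\LL{}}W})$; since the product is evaluation, this identity reads $(\lambda u)f=\lambda(uf)$, which says precisely that every such $f$ is $\bar{\LL{}}$-linear. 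Hence the two hom-sets coincide, the two evaluation bimaps are literally equal, and $\bar{\bar{\LL{}}}=\LL{_{\bar{\LL{}}}\lversor}=\LL{_{\LL{}}\lversor}=\bar{\LL{}}$. The same tautology handles $\otimes_{\MM{}}$ (the extra relations $(u\mu)\otimes v=u\otimes(\mu v)$ for $\mu\in\bar{\MM{}}$ already hold in $U\otimes_{\MM{}}V$ by the definition of $\bar{\MM{}}$ as adjoints of $\otimes_{\MM{}}$) and $\rversor_{\RR{}}$.

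In short: this proposition is a formal Galois-connection statement (a composite of two order-reversing maps is a closure operator, and closure operators are idempotent), not the bicommutant theorem you are gesturing at. The genuinely non-formal double-centralizer-flavoured fact in this circle of ideas would be the equality $\bar{\LL{}}=\LL{}$ for $\LL{}=\LL{*}$, which is a different claim and is where Theorem~\ref{thm:universal} and the references earn their keep. Fix the direction in step (3), delete the appeal to nondegeneracy and to the citations, and your proof becomes complete and about four lines long --- which is more than the paper gives, so this is worth writing out.
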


Before considering our main theorems for tensor and versor products we pause to remark that
these products can be degenerate, for example, $\mathbb{Z}_2\times \mathbb{Z}_3\bmto \mathbb{Z}_2\otimes \mathbb{Z}_3$ or 
$\mathbb{Z}_2 \times \mathbb{Z}_2\lversor \mathbb{Z}\bmto \mathbb{Z}$ will both evaluate to $0$.  
We continue to consider solely the nondegenerate case so exclude such examples in our discussion below.

Given that we need only look at tensor and versor products over closed rings we can in fact describe
the invariants explicitly (assuming fully nondegenerate products).  See Table~\ref{tab:universal-rings}.
To verify this notice for instance in the case of ${_{\LL{}} \lversor}:U\times U _{\LL{}}\lversor W\bmto W$,
$\End(_{\LL{}} W)$ already acts on $W$ and also in a natural way on 
$U_{\LL{}}\lversor W=\hom(_{\LL{}} U,_{\LL{}} W)$.  So by the universality of the right scalars $\End(_{\LL{}}W)$
embeds in $\RR{_{\LL{}}\lversor}$.  But by a 3-pile-shuffle and the assumption of nondegeneracy, 
we need to ensure $W$ is an $(\LL{},\RR{_{\LL{}}\lversor})$-bimodule
so this limits $\RR{_{\LL{}}\lversor}$ to be isomorphic to $\End(_{\LL{}} W)$.  Similar claims explain the other entries.

\begin{table}[!htbp]
\begin{tabular}{|c|ccc|c|c|}
\hline
$*$	&  $\LL{*}$ & $\MM{*}$ & $\RR{*}$ & $\CC{*}$ & $\Aut(*)$\\
\hline\hline
$_{\LL{}}\lversor$ & $\LL{}$ & $\End(_{\LL{}} U)$ & $\End(_{\LL{}} W)$ & $Z(\LL{})$ & $N_{\Aut(_{\LL{}} U)\times\Aut(_{\LL{}} W)}(\LL{})$\\
$\otimes_{\MM{}}$ & $\End(U_{\MM{}})$ & $\MM{}$ & $\End(_{\MM{}} V)$ & $Z(\LL{})$ & $N_{\Aut(U_{\MM{}})\times\Aut(_{\MM{}} V)}(\MM{})$\\
$\rversor_{\RR{}}$ & $\End(W_{\RR{}})$ & $\End(V_{\RR{}})$ & $\RR{}$ & $Z(\LL{})$ & $N_{\Aut(V_{\RR{}})\times\Aut(W_{\RR{}})}(\RR{})$\\
\hline
\end{tabular}
\caption{Given closed rings $\LL{}=\bar{\LL{}}\subseteq \End(U_K)\oplus \End(W_K)$, $\MM{}=\bar{\MM{}}\subseteq \End(_K U)\oplus \End(V_K)$, 
and $\RR{}=\bar{\RR{}}\subseteq \End(_K V)\oplus \End(_K W)$, the remaining universal scalars
are determined as above.}\label{tab:universal-rings}
\end{table}

We can further explain the autotopisms of tensor and versor products as normalizers.  This observation was made
in the case of tensors in \cite{BW:autotopism}*{Theorem 1.5} but we now see it as a general consequence.
For example, let $\phi=(L_{\phi}^U,L_{\phi}^W)$ normalize $\LL{}=\bar{\LL{}}$.  For $f:U\to W\in U_{\LL{}}\lversor W$, define
$fR_\phi^{U\lversor W}=L_{\phi^{-1}}^U f L_{\phi}^W$.
\begin{align*}
	u\phi\lversor f\phi & = uL_{\phi}^U L_{\phi^{-1}}^U fL_{\phi}^W = (u\lversor f)\phi.
\end{align*}
So $(L_{\phi^{-1}}^U,R_\phi^{U\lversor W}; L_{\phi}^W)\in \Aut(\lversor)$.  This explains the last column
in Table~\ref{tab:universal-rings}.  Of course it is far more descriptive to apply the exact sequences of Theorem~\ref{thm:auto}.

We close by demonstrating how to use these results when we have well-behaved rings, proving Theorem~\ref{thm:Skolem-Noether}.

\begin{thm*}[Generalized Skolem-Noether]
For a field $k$, the $k$-linear autotopisms and derivations of matrix multiplication
$*:\M_{a\times b}(k)\times \M_{b\times c}(k)\bmto \M_{a\times c}(k)$ are inner.
\end{thm*}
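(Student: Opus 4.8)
The plan is to apply the exact sequences of Theorem~\ref{thm:auto} after identifying the relevant rings for $*$, namely $\LL{*}\cong\M_a(k)$, $\MM{*}\cong\M_b(k)$, $\RR{*}\cong\M_c(k)$, hence $\LMR_*\cong\M_a(k)\oplus\M_b(k)\oplus\M_c(k)$, and $\CC{*}\cong k$ (this is recorded in Table~\ref{tab:universal-rings} and the surrounding discussion; one must also check that $*$ is fully nondegenerate, which is immediate). First I would reduce to $\LMR$-linear autotopisms: since $\CC{*}=k$ is a field, $\Out_k(\CC{*})=1$, so sequence (D) gives $\Aut_k(*)=\Aut_{\CC{*}}(*)$; and since $\LMR_*$ is a direct sum of matrix algebras over $k$, every $k$-algebra automorphism is inner by the classical Skolem--Noether theorem together with the fact that permuting the summands $\M_a,\M_b,\M_c$ is impossible unless $a=b=c$ and even then such a permutation does not preserve the module structure (it would interchange a left and a right action on $V$). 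Thus $\Out_{\CC{*}}(\LMR_*)=1$, and sequence (E) collapses to a surjection $\LMR_*^{\times}\times\Aut_{\LMR}(*)\onto\Aut_{\CC{*}}(*)=\Aut_k(*)$ with kernel $Z(\LMR_*)^{\times}$.

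Next I would apply the condensation Theorem~\ref{thm:condense} (equivalently Theorem~\ref{thm:auto}(F)). Choosing the full primitive idempotent $e=E_{11}\oplus E_{11}\oplus E_{11}\in\LMR_*$, each of $eUe$, $eVe$, $eWe$ is a single copy of $k$, and $\cond{e}{*}$ is just multiplication $k\times k\bmto k$. Its $k$-linear autotopisms are exactly the pairs of units described by Theorem~\ref{thm:scalar-aut}, i.e. $\Aut_k(k\times k\bmto k)\cong k^{\times}\times k^{\times}$, all of which come from $\LMR^{\times}$ of that small bimap. Hence $\Aut_{\LMR}(*)\cong k^{\times}\times k^{\times}\cong Z(\LMR_*)^{\times}$, and this copy sits inside $\LMR_*^{\times}$ via the map $\mathfrak{F}$. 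Feeding this back into the collapsed form of (E), the image of $\LMR_*^{\times}$ under $\mathfrak{G}$ already exhausts $\Aut_k(*)$, which by definition \eqref{def:inner}/\eqref{def:inn} means $\Aut_k(*)=\Inn(*)=\LMR_*^{\times}/\CC{*}^{\times}$; computing $\LMR_*^{\times}=\GL_a(k)\times\GL_b(k)\times\GL_c(k)$ and $\CC{*}^{\times}=k^{\times}$ embedded diagonally as $(sI_a,sI_b,sI_c)$ gives the stated formula.

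For the derivation statement the argument is the mirror image, using Theorem~\ref{thm:der-2} in place of Theorem~\ref{thm:auto}(E): over a field $\Der_k(\CC{*})=\Der_k(k)=0$, and $\Der_k(\LMR_*)/\ad\LMR_*=0$ because derivations of a semisimple $k$-algebra are inner (Whitehead's lemma / Hochschild), with no room for a nontrivial outer part; the condensation half of Theorem~\ref{thm:condense} reduces $\Der_{\LMR}(*)$ to $\Der_{e\LMR e}(k\times k\bmto k)$, which is the two-dimensional abelian algebra $\CC{*}^-\oplus\CC{*}^-$ already accounted for inside $\LMR_*^-$. Hence $\Der_k(*)=\ad(*)=\LMR_*^-/\CC{*}^-$, i.e. all derivations are inner.

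I expect the main obstacle to be the bookkeeping around the ``outer'' terms: one must be careful that $\Out_{\CC{*}}(\LMR_*)$ really is trivial, which requires ruling out automorphisms that permute the three matrix blocks. The point is that although $\LMR_*$ as an abstract ring has such automorphisms when some of $a,b,c$ coincide, an automorphism arising from $\mathfrak{H}$ must be compatible with the $\LMR_*$-bimodule structures on $U$, $V$, $W$ (it is conjugation by an actual autotopism), and the left/right roles of the blocks on a fixed module pin down each block, so no nontrivial permutation can occur. Everything else is the systematic verification of the exact sequences already established, specialized to this example.
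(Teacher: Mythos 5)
Your proposal is correct and follows essentially the same route as the paper: identify $\LL{*},\MM{*},\RR{*},\CC{*}$ from Table~\ref{tab:universal-rings}, kill the outer terms in sequences (D) and (E) via the classical Skolem--Noether theorem, and compute $\Aut_{\LMR}(*)\cong k^{\times}\times k^{\times}$ by Morita condensation at a full primitive idempotent. Your extra care in ruling out block permutations of $\LMR_*$ and your explicit treatment of the derivation half (via Theorem~\ref{thm:der-2} and Whitehead's lemma) are welcome details the paper leaves implicit, but they do not change the argument.
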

\begin{proof}
First observe that $\LL{*}\cong \M_a(k)$, $\MM{*}\cong \M_b(k)$, $\RR{*}\cong \M_c(k)$, and $\CC{*}\cong k$; 
cf. Table~\ref{tab:universal-rings}.  By the Skolem-Noether theorem $\Out_K(\M_a(k))\cong \Out_K(\M_b(k))\cong \Out_K(\M_c(k))\cong 1$.
By Theorem~\ref{thm:auto} we therefore have the following exact sequence.
\begin{align*}
	1\to (k^\times)^3 \to \GL_a(k)\times \GL_b(k)\times \GL_c(k)\times \Aut_{\LMR}(*)\to \Aut_k(*)\to 1.
\end{align*}
Fix a primitive idempotent $e$.  By Theorem~\ref{thm:condense}, $\Aut_{\LMR}(*)\cong \Aut_{e\LMR e}(e*e)$.
Since $e$ is primitive, and full, $e\LMR e\cong k$ and $e\M_{a\times b}(k)e\cong k$, $e\M_{b\times c}(k)e\cong k$,
and $e\M_{a\times c}(k)e\cong k$.  So $\Aut_{e\LMR e}(e*e)=\Aut_k(k\times k\bmto k)\cong k^\times \times k^\times$.
The result now follows.
\end{proof}

%==================================================================================================
%
%==================================================================================================
\section{Claims under symmetry and degeneracy}\label{sec:sym-deg}

Already in the study of $\Aut(\mathcal{T}_{abc}(k))$ it becomes necessary to consider autotopisms
that preserve symmetry and bimaps that are degenerate.  So in this section we adapt the methods above
in these two ways.

\subsection{Structure of pseudo-isometries}
Now we consider a special context that is prevalent to algebra.  Often a bimap $*:U\times V\bmto W$ is
symmetric, alternating, or in general weakly Hermitian in the sense we defined in Section~\ref{sec:prelims}.
So throughout this section suppose that $*:U\times V\bmto W$ is fully nondegenerate and weakly Hermitian
with respect to $\tau$, i.e. $\tau:*\to \tilde{*}$ is an isotopism and $\tau\tilde{\tau}=1_*$.  In particular
$R^V_{\tau}=(R_{\tau}^U)^{-1}$ and $(R_{\tau}^W)^2=1_W$.  Recall that for $\alpha\in \End(_k X)$,
$R_{\alpha}^X\mapsto L_{\alpha}^X$ is our anti-isomorphism $\End(_k X)\to \End(X_k)$.

First observe that $\tau$ induces anti-isomorphisms $\LL{*}\to \RR{*}$, $\MM{*}\to \MM{*}$, and
$\RR{*}\to \LL{*}$ as follows.
\begin{align*}
	\bar{\lambda} & = \overline{(L_{\lambda}^U, L_{\lambda}^W)} 
		=(R^V_{\tau}R_{\lambda}^UR^U_{\lambda},R^W_{\tau}R^W_{\lambda}R^W_{\tau})\\
	\bar{\mu} & = \overline{(R_{\mu}^U,R_{\mu}^W)} 
		= (R_{\tau}^U R^V_{\mu}R_{\tau}^V, R_{\tau}^V L_{\mu}^UR_{\tau}^V),\\
	\bar{\rho} & = \overline{(R_{\rho}^V,R_{\rho}^W)} 
		= (R_{\tau}^U R_{\rho}^V R_{\tau}^V, R_{\tau}^W R_{\rho}^W R^W_{\tau}).
\end{align*}
In particular there is a ring involution on $\LMR_*$ given by
\begin{align}\label{def:involution}
	\overline{(\lambda, \mu,\rho)} = (\bar{\rho},\bar{\mu},\bar{\lambda}).
\end{align}
Given an algebra $A$ with involution $a\mapsto \bar{a}$ we define:
\begin{align*}
	U(A,\bar{\cdot}) & = \{ a\in A^{\times} : a\bar{a}=1=\bar{a}a\},\\
	\Aut(A,\bar{\cdot}) & = \left\{ \phi\in \Aut(A) : \bar{a}\phi=\overline{a\phi}\right\},\\
	\Inn(A,\bar{\cdot}) & = \{\phi\in \Inn(A) : \bar{a}\phi=\overline{a\phi}\},\\
	Z(A,\bar{\cdot}) & = \{ z\in Z(A) : a^*=a\}.
\end{align*}

If we follow the details in the proof of Theorem~\ref{thm:auto} we see:
\begin{thm}\label{thm:pseudo}
\begin{align*}
	1\to Z(\LMR_*^{\times},\bar{\cdot})\to U(\LMR_*,\bar{\cdot})\times \Psi\Isom_{\LMR}(*) \to \Psi\Isom(*)\to\frac{\Aut(\LMR_*,\bar{\cdot})}{\Inn(\LMR_*, \bar{\cdot})}.
\end{align*}
\end{thm}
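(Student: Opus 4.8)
The plan is to mimic, step by step, the proof of Theorem~\ref{thm:auto}(E) (i.e. the chain of results Theorem~\ref{thm:scalar-aut}, Theorem~\ref{thm:aut-aut}, and the final assembly), but everywhere replacing $\Aut_k(*)$ by $\Psi\Isom(*)$, the ring $\LMR_*$ by the pair $(\LMR_*,\bar{\cdot})$, and the relevant unit/center/automorphism groups by their $\bar{\cdot}$-compatible analogues $U(\LMR_*,\bar{\cdot})$, $Z(\LMR_*^{\times},\bar{\cdot})$, $\Aut(\LMR_*,\bar{\cdot})$, $\Inn(\LMR_*,\bar{\cdot})$. The key point making this work is that the involution $\overline{(\lambda,\mu,\rho)}=(\bar{\rho},\bar{\mu},\bar{\lambda})$ on $\LMR_*$ defined in \eqref{def:involution} is exactly the one induced by transport along $\tau$, so that the defining condition $\phi\tau=\tau\tilde{\phi}$ of $\Psi\Isom(*)$ in \eqref{def:pseudo} translates into $\bar{\cdot}$-equivariance of the various maps $\mathfrak{F},\mathfrak{G},\mathfrak{H}$.

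First I would check that $\mathfrak{F}:\CC{*}^{\times}\to\LMR_*^{\times}$ and $\mathfrak{G}:\LMR_*^{\times}\to\Aut_k(*)$ from Theorem~\ref{thm:scalar-aut} restrict appropriately: a central scalar $\sigma$ with $\sigma\mathfrak{F}$ fixed by $\bar{\cdot}$ is precisely an element of $Z(\LMR_*^{\times},\bar{\cdot})$, and an element $(\lambda,\mu,\rho)\in\LMR_*^{\times}$ with $\overline{(\lambda,\mu,\rho)}=(\lambda,\mu,\rho)^{-1}$ — that is, an element of $U(\LMR_*,\bar{\cdot})$ — maps under $\mathfrak{G}$ into $\Psi\Isom(*)$, because the condition $a\bar a=1$ is exactly what is needed to make $(\lambda,\mu,\rho)\mathfrak{G}$ commute with $\tau$ in the sense of \eqref{def:pseudo}. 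Conversely, if $(\lambda,\mu,\rho)\mathfrak{G}\in\Psi\Isom(*)$ then running the nondegeneracy (3-pile-shuffle) argument against $\tau$ forces $\overline{(\lambda,\mu,\rho)}=(\lambda,\mu,\rho)^{-1}$. This yields the exactness $1\to Z(\LMR_*^{\times},\bar{\cdot})\overset{\mathfrak{F}}{\to} U(\LMR_*,\bar{\cdot})\overset{\mathfrak{G}}{\to}\Psi\Isom(*)$, and then one incorporates $\Psi\Isom_{\LMR}(*)$ by the same device used for $\hat{\mathfrak G}$ in the proof of Theorem~\ref{thm:auto}(E): form $((\lambda,\mu,\rho),\alpha)\mapsto(\lambda,\mu,\rho)\mathfrak{G}\,\alpha^{-1}$, whose kernel is parameterized by pairs with $(\lambda,\mu,\rho)\mathfrak{G}\in\Psi\Isom_{\LMR}(*)$, i.e. central and $\bar{\cdot}$-fixed.

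Next I would redo Theorem~\ref{thm:aut-aut} in this setting: conjugation by $\phi\in\Psi\Isom(*)$ sends $\LMR_*$ to itself (already known), and the extra hypothesis $\phi\tau=\tau\tilde{\phi}$ forces $\mathfrak{H}(\phi)$ to commute with the involution $\bar{\cdot}$, so $\mathfrak{H}$ restricts to a homomorphism $\Psi\Isom(*)\to\Aut(\LMR_*,\bar{\cdot})$ whose kernel is by definition $\Psi\Isom_{\LMR}(*)$, and whose image, on the subgroup coming from $\LMR_*^{\times}$ via $\mathfrak{G}$, lands in $\Inn(\LMR_*,\bar{\cdot})$. Factoring $\mathfrak{H}$ through $\Aut(\LMR_*,\bar{\cdot})/\Inn(\LMR_*,\bar{\cdot})$ then closes the exact sequence. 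The main obstacle I expect is purely bookkeeping rather than conceptual: one must verify that the anti-isomorphisms $\lambda\mapsto\bar\lambda$ etc. interact correctly with the operator-versus-scalar side conventions (Remark~\ref{rem:notation}) so that $\overline{(\lambda,\mu,\rho)}=(\bar\rho,\bar\mu,\bar\lambda)$ really is the involution transported by $\tau$, and that the identities $R^V_{\tau}=(R^U_{\tau})^{-1}$ and $(R^W_{\tau})^2=1_W$ get used in exactly the right places; the rest is a line-by-line transcription of the unitary-group argument over the ring-with-involution $(\LMR_*,\bar{\cdot})$, which is why the statement can be asserted with ``If we follow the details in the proof of Theorem~\ref{thm:auto}.''
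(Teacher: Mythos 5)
Your proposal is correct and is essentially the paper's own argument: the paper offers no proof beyond the remark ``If we follow the details in the proof of Theorem~\ref{thm:auto} we see:'', and your outline is precisely that transcription, restricting $\mathfrak{F}$, $\mathfrak{G}$, $\hat{\mathfrak{G}}$, and $\mathfrak{H}$ to the $\tau$-equivariant setting and checking that the unitarity condition $a\bar{a}=1$ is what makes $a\mathfrak{G}$ commute with $\tau$. Your closing caveat about reconciling the operator-side conventions in the formulas for $\bar{\lambda},\bar{\mu},\bar{\rho}$ is well placed, since that bookkeeping is the only substantive content the paper leaves implicit.
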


The group $U(\LMR_*,\bar{\cdot})$ is not as easy to describe as the units of a ring; however, using work of Weil, Wagner, Taft,
and Brooksbank and the author there is now a rather robust understanding of this group and indeed a polynomial time algorithm
to compute the group.  See \cite{BW:isom} for details and full bibliography.  

\subsection{The weakly-Hermitian matrix products}
Given our interest in automorphisms of $\mathcal{T}(*;S)$ for $S=\{0\}$ and $S=\{1\}$ we need to consider a specific
weakly-Hermitian bimap.  Given a bimap $*:U\times V\bmto W$ and $\epsilon=\pm 1$, define $\#:(U\oplus V)\times (V\oplus U)\to W$ as
\begin{align*}
	(u,v)\# (v',u') & = u*v'+ \epsilon (u'*v).
\end{align*}
This is an orthogonal sum of $*$ and $\epsilon\tilde{*}$.  As a consequence its properties can be derived
from $*$.  Furthermore, observe that $\#$ is weakly $\tau$-Hermitian where $(u,v)R_{\tau}^U=(v,u)$, $(v,u)R_{\tau}^V=(u,v)$,
and $wR^W_{\tau}=-w$.  Our interest will be in the pseudo-isometries of $\#$ as a function of $*$.  Following Theorem~\ref{thm:pseudo}
we have reduced the question to computing $U(\LMR_{\#},\bar{\cdot})$.  This work is to describe $\LMR_{\#}$ as a solution to 
a system of equations, then define the unitary elements under the involution.  For any specific choice of $(a,b,c)$ and $k$
there is an efficient algorithm for the task given in \cite{BW:isom}.  The purpose of the computations in this section
are to resolve the problem for all $(a,b,c)$, and since the approach is essentially linear algebra this is indeed nothing more
than a symbolic computation.  To make it slightly quicker in places we apply some shortcuts using Morita condensation.

We need to briefly describe the category of Adjoints of a bimap as introduced in \cite{Wilson:division} and found independently in
\cite{BFFM}.

Fix $W$.  Given bimaps $*:U_*\times V^*\bmto W$ and $\bullet:U_{\bullet}\times V^{\bullet}\bmto W$, an {\em adjoint-morphism}
is a pair $\phi=(R^U_\phi:U_*\to U_{\bullet},L_\phi^V:V^{\bullet}\to V^*)$ such that
\begin{align*}
	(\forall & u\in U_*, \forall v'\in V^{\bullet}) & u\phi \bullet v' & = u*\phi v'. 
\end{align*}
This forms a category, indeed an abelian category, and it has many useful properties similar to categories of modules.
These are given in detail in \citelist{\cite{BFFM}\cite{Wilson:division}}.  What we need is to observe that the
hom-sets in this category are natural abelian groups, and to avoid confusion with homotopism categories we denote
them $\Adj(*,\bullet)$.  Notice that for a fixed bimap $*$, $\Adj(*,*)=\MM{*}$.  This invites a further universal perspective
to the description of the rings $\MM{*}$.  (As might be assumed by this point, there are also natural categories in which
the rings $\LL{*}$ and $\MM{*}$ serve as the endomorphisms.)

\begin{prop}\label{prop:twist}
$\CC{\#}\cong \LL{\#}\cong \RR{\#}\cong \CC{*}$ and
\begin{align*}
	\MM{\#} & = \left\{ \left(\begin{bmatrix} R_\mu^U & R_{\rho}^U \\ R_{\psi}^V & R_{\nu}^V \end{bmatrix},
						\begin{bmatrix} L_\mu^V & \epsilon L_{\rho}^V \\ \epsilon L_{\psi}^U & L_{\nu}^U \end{bmatrix}\right) :
						\begin{array}{cc}
							\mu\in \MM{*}, &  \rho\in \Adj(*,\epsilon \tilde{*}),\\ 
							\psi\in \Adj(\epsilon \tilde{*},*), & \nu\in \MM{\tilde{*}}
						\end{array}\right\}.
\end{align*}
\end{prop}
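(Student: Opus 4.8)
The plan is to exploit that $\#$ is the direct sum $*\oplus\epsilon\tilde{*}$ formed inside the adjoint category over the fixed module $W$: the left module $U_{\#}=U\oplus V$ is the direct sum of the left modules of $*$ and of $\epsilon\tilde{*}$, the right module $V_{\#}=V\oplus U$ is the direct sum of their right modules, and the two cross pairings vanish (indeed $(u,v)\#(v',u')=u*v'+\epsilon(u'*v)$ restricts to $*$ on the first summands and to $v\diamond u'=\epsilon(u'*v)=\epsilon(v\,\tilde{*}\,u')$ on the second). Testing nondegeneracy of $\#$ on the pure inputs $(u,0)$ and $(0,v)$ shows that $\#$ is fully nondegenerate whenever $*$ is, so all of the earlier machinery applies.

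First I would settle $\LL{\#}$, $\RR{\#}$, and $\CC{\#}$. Since $\#$ is fully nondegenerate, $\CC{\#}$ embeds in $\LL{\#}$ by the general embedding already recorded for fully nondegenerate bimaps. Given $\lambda\in\LL{\#}$, write $L_{\lambda}^{U_{\#}}$ as a $2\times2$ block map along $U\oplus V$ and test the identity $(\lambda(u,v))\#(v',u')=L_{\lambda}^{W}((u,v)\#(v',u'))$ with $u'=0$, then with $v'=0$: nondegeneracy of $*$ kills the two off-diagonal blocks, so $L_{\lambda}^{U_{\#}}=a\oplus d$. The residual identities read $(a,L_{\lambda}^{W})\in\LL{*}$ and $(d,L_{\lambda}^{W})\in\LL{\epsilon\tilde{*}}=\LL{\tilde{*}}$, i.e. $(au)*v=u*(dv)=L_{\lambda}^{W}(u*v)$ for all $u,v$, which is exactly the assertion that the triple $(a,d,L_{\lambda}^{W})$ of additive maps lies in $\CC{*}$; conversely each $\sigma\in\CC{*}$ yields $\lambda=\sigma_{U}\oplus\sigma_{V}$ on $U_{\#}$ and $\sigma_{W}$ on $W$ lying in $\LL{\#}$ by the same identity, and nondegeneracy makes these assignments mutually inverse ring maps, so $\LL{\#}\cong\CC{*}$. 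Then $\RR{\#}\cong\CC{*}$ follows from the symmetric computation, or more cheaply from the involution $\overline{(\lambda,\mu,\rho)}=(\bar{\rho},\bar{\mu},\bar{\lambda})$ of \eqref{def:involution} on $\LMR_{\#}$, which interchanges $\LL{\#}$ and $\RR{\#}$ anti-isomorphically, together with $\CC{*}\cong\CC{*}^{\circ}$. Finally the diagonal copy of $\CC{*}$ visibly sits inside $\CC{\#}$, and the composite $\CC{*}\hookrightarrow\CC{\#}\hookrightarrow\LL{\#}\cong\CC{*}$ is the identity, forcing $\CC{\#}\cong\CC{*}$.

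For $\MM{\#}$ I would use that $\Adj(B,B)=\MM{B}$ for every bimap $B$ into $W$ and that $\Adj$ is an additive (indeed abelian) category. Since $\#=*\oplus\epsilon\tilde{*}$ in $\Adj$, its endomorphism ring $\MM{\#}=\Adj(\#,\#)$ decomposes, as the endomorphism ring of a biproduct in an additive category, into a $2\times2$ array whose diagonal entries are $\Adj(*,*)=\MM{*}$ and $\Adj(\epsilon\tilde{*},\epsilon\tilde{*})=\MM{\epsilon\tilde{*}}=\MM{\tilde{*}}$ and whose off-diagonal entries are $\Adj(*,\epsilon\tilde{*})$ and $\Adj(\epsilon\tilde{*},*)$ — precisely the parameter set in the statement. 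It then remains to pin down the explicit realization: writing $\mu\in\MM{\#}$ as the pair $(R_{\mu}^{U_{\#}},L_{\mu}^{V_{\#}})$ and decomposing $R_{\mu}^{U_{\#}}$ along $U\oplus V$ and $L_{\mu}^{V_{\#}}$ along $V\oplus U$, the mid-linearity identity $((u,v)\mu)\#(v',u')=(u,v)\#(\mu(v',u'))$ separates on the four pure inputs into the $\MM{*}$-, the $\MM{\tilde{*}}$-, the $\Adj(*,\epsilon\tilde{*})$- and the $\Adj(\epsilon\tilde{*},*)$-conditions; because the second summand of $\#$ carries the scalar $\epsilon$ (with $\epsilon^{2}=1$), matching the abstract off-diagonal adjoint-morphisms with genuine blocks of an $\MM{\#}$-operator forces exactly the two factors of $\epsilon$ displayed in the $L_{\mu}^{V_{\#}}$ block.

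I expect the only genuine obstacle to be the left/right-action bookkeeping across the two summands: keeping straight which block of $R_{\mu}^{U_{\#}}$ maps into which summand of $U_{\#}$, and checking that the anti-isomorphism $\End(_{k}X)\to\End(X_{k})$ intertwines the off-diagonal blocks of $R_{\mu}^{U_{\#}}$ with those of $L_{\mu}^{V_{\#}}$ up to the sign $\epsilon$ (here $\epsilon^{2}=1$ does the reconciling). As cautioned in Remark~\ref{rem:notation}, this is a fussy but entirely mechanical verification once the conventions are fixed; everything structural — the direct-sum decomposition in $\Adj$, the matrix description of the endomorphism ring of a biproduct, and the nondegeneracy and 3-pile-shuffle arguments for $\LL{\#}$, $\RR{\#}$, $\CC{\#}$ — is formal.
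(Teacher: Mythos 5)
Your proposal is correct and follows essentially the same route as the paper: block-decompose the operators along the orthogonal summands $U\oplus 0$ and $0\oplus V$, use nondegeneracy and the isotropy of the cross pairings to kill the off-diagonal blocks of the left/right scalars (thereby identifying $\LL{\#}$, $\RR{\#}$, $\CC{\#}$ with $\CC{*}$), and read off $\MM{\#}$ as the $2\times 2$ array of adjoint hom-sets between $*$ and $\epsilon\tilde{*}$. The only cosmetic differences are that the paper carries out the diagonal-block argument for $\RR{\#}$ rather than $\LL{\#}$ and phrases the $\MM{\#}$ computation as a direct restriction to the four pure summand pairs rather than invoking the biproduct-endomorphism-ring formalism of the additive category $\Adj$.
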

\begin{proof}
Define $\CC{*}\to \CC{\#}$ by $\sigma\mapsto \left(R_{\sigma}^U \oplus R_{\sigma}^V, R_{\sigma}^V\oplus R_{\sigma}^U; R_{\sigma}^W\right)$.
This is a ring embedding.  Accordingly we have embeddings $\CC{*}\to \LL{\#}$ and $\CC{*}\to \RR{\#}$.  We now claim
each is an isomorphism.  We prove this for $\RR{\#}$ and remark the others follow similarly.

Fix $\Sigma\in \RR{\#}$ and decompose $R_{\Sigma}^{U_{\#}}=\begin{bmatrix}\Sigma_{11} & \Sigma_{12} \\ \Sigma_{21} & \Sigma_{22} \end{bmatrix}$ with respect 
to the decomposition $V_{\#}=V\oplus U$.  Then for all $u\in U$, $v'\in V$,  
\begin{align*}
	(u*v')R_{\Sigma}
	((u,0)\#(v',0))R_{\Sigma}^W & = (u,0) \# (v',0)\begin{bmatrix}\Sigma_{11} & \Sigma_{12} \\ \Sigma_{21} & \Sigma_{22} \end{bmatrix}
		 = u*v'\Sigma_{11}.
\end{align*}
Hence, $(\Sigma_{11},R_{\Sigma}^W)\in \RR{*}$.  By instead using the restriction to $0\oplus V\times 0\oplus U\bmto W$ we find
$(\Sigma_{22},R_{\Sigma}^W)\in \RR{\epsilon\tilde{*}}=\LL{*}$.  This means
\begin{align*}
	u\Sigma_{22}*v' & = (u*v') R_{\Sigma}^W = u*(v'\Sigma_{11}).
\end{align*}
That is to say, $(\Sigma_{22},\Sigma_{11};R_{\Sigma}^W)\in \CC{*}$.  Since $\CC{*}$ embeds in $\RR{\#}$ in just this way we can subtract off
the block diagonal of $\Sigma$ and be concerned solely with the remaining case where $\Sigma_{11}=0$ and $\Sigma_{22}$.  Here we consider the
restrictions to $U\oplus 0\times 0\oplus V\bmto W$ and $0\oplus V\times V\oplus 0\bmto W$.  As $U\oplus 0$ and $0\oplus V$ are isotropic it
follows that for all $u\in U$, $0=U* u\Sigma_{12}$ so that $u\Sigma_{12}\in U^{\bot}=0$.  Likewise $\Sigma_{21}=0$.  Therefore the embedding
$\CC{*}\to \RR{\#}$ is an isomorphism.

To compute the structure of $\mu\in \MM{\#}$ write
\begin{align*}
	R_{\mu}^{U_{\#}} &  = \begin{bmatrix} F_{11} & F_{12}\\ F_{21} & F_{22} \end{bmatrix}
	&	L_{\mu}^{V_{\#}} &  = \begin{bmatrix} G_{11} & G_{12}\\ G_{21} & G_{22} \end{bmatrix}
\end{align*}
with respect to $U\oplus V$ and $V\oplus U$.
By restriction to $U\oplus 0\times V\oplus 0$ shows that $(F_{11},G_{11})\in \Adj(*,*)=\MM{*}$.  Likewise 
$(F_{22},G_{22})\in \MM{\tilde{*}}$.  For the crossed term $(F_{12},G_{12})$ takes $U\oplus 0$ to 
$0\oplus V$, which is orthogonal to $V\oplus 0$.  Therefore the constraints on $(F_{12},G_{12})$ 
are determined by the constraint of $\#$ to $U\oplus 0\times 0\oplus U$ and so lies in 
$\Adj(*,\tilde{*})$.  The rest follows similarly.
\end{proof}

\begin{ex}\label{ex:adj-mat}
For $(a,b,c)$-matrix multiplication $*:\M_{a\times b}(k)\times \M_{b\times c}(k)\bmto \M_{a\times c}(k)$,
\begin{align*}
	\Adj(*,\epsilon\tilde{*}) & = \left\{\begin{array}{cc} 	
		\M_b(k)^{\oplus c}, & a=1;\\
		0, & a>1.\end{array}\right.
&	\Adj(\epsilon\tilde{*},*) & = \left\{\begin{array}{cc} 		
		\M_b(k)^{\oplus a}, & c=1;\\
		0, & c>1.\end{array}\right.
\end{align*}
\end{ex}
\begin{proof}
As adjoints of $k$-bimaps are an abelian category the hom-sets are determined by a system of $k$-linear
equations.  Using the matrix units $E_{ij}$ the constraints for $\Adj(*,\tilde{*})$ can be computed.
A less arduous approach is to apply Morita condensation.  

First condense form
$(a,b,c)$-matrix products to $(a,1,c)$-matrix products $*:\M_{a\times 1}(k)\times \M_{1\times c}(k)\bmto \M_{a\times c}(k)$.  
In particular $\tilde{*}$ is isotopic to $(c,1,a)$-matrix multiplication.  

If $a=1$ then we are comparing the left $k$-vector space $k\times k^c\bmto k^c$ with the right 
$k$-vector space $\tilde{*}:k^c\times k\bmto k^c$.  
Given $F\in \M_{1\times c}(k)$ it follows that
$s,t\in k$, $(sF)t=s(\epsilon^2 Ft)$ so that $(F,\epsilon F)\in \Adj(*,\epsilon\tilde{*})$.
As $*$ is nondegenerate given any $(F,G)\in \Adj(*,\tilde{*})$, $G$ is determined by $F$.  In particular
$\Adj(*,\tilde{*})\cong \M_{1\times a}(k)$.  Then we re-inflate using $\otimes \M_b(k)$ to 
find that for $(1,b,c)$-matrix products $*$, $\Adj(*,\epsilon\tilde{*})\cong k^a \otimes \M_b(k)$.
By contrast if  we look at $\Adj(\epsilon\tilde{*},*)$ then we need $F\in \M_{c\times 1}(k)$ 
and $G\in \M_{1\times c}(k)$ such for $u,v\in k^c$, $t\in k$, $(u F)v^t=u (Gv^t)$.  Unless $F$ is 
square this cannot be, and $F$ is square only if $c=1$. 

If instead $a>1$ then we are considering the tensor products $*:k^a\times k^c\to k^a\otimes_k k^c$
and $\tilde{*}:k^{c}\times k^a\bmto k^c\otimes_k k^a$.  Here the adjoint-morphisms are always trivial.
\end{proof}

\begin{thm}
Given the bimap $*:\M_{a\times b}(k)\times \M_{b\times c}(k)\bmto \M_{a\times c}(k)$ and
$\# = * \bot (\epsilon \tilde{*})$, it follows that
\begin{align*}
	\Psi\Isom(\#) & = \left\{ \begin{array}{cc}
		({\rm Sp}_{2b}(k)\times k^{\times})\rtimes \Gal(k) &  a=c=1\\
		(\GL_b(k)\times k^{\times})\rtimes \Gal(k) & 1<a,c\\
		\left(k^a\otimes \M_b(k)\otimes k^c \rtimes (\GL_b(k)\times k^{\times})\right)\rtimes \Gal(k) & \textnormal{else}.\\
	\end{array}\right.
\end{align*}
\end{thm}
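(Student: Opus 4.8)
The plan is to feed $\#$, with its canonical involution \eqref{def:involution}, into the exact sequence of \thmref{thm:pseudo},
\begin{align*}
	1\to Z(\LMR_{\#}^{\times},\bar{\cdot})\to U(\LMR_{\#},\bar{\cdot})\times \Psi\Isom_{\LMR}(\#)\to \Psi\Isom(\#)\to \frac{\Aut(\LMR_{\#},\bar{\cdot})}{\Inn(\LMR_{\#},\bar{\cdot})},
\end{align*}
and then to evaluate each of its four outer terms. The first ingredient is the description of $\LMR_{\#}$ with its involution supplied by \propref{prop:twist}: using $\CC{*}\cong k$ and $\MM{*}\cong\MM{\tilde*}\cong\M_b(k)$ we have $\LL{\#}\cong\RR{\#}\cong\CC{\#}\cong k$, interchanged by the involution, while $\MM{\#}$ is the block algebra $\left[\begin{smallmatrix}\M_b(k) & \Adj(*,\epsilon\tilde*)\\ \Adj(\epsilon\tilde*,*) & \M_b(k)\end{smallmatrix}\right]$ on which the induced anti-automorphism interchanges the two diagonal copies of $\M_b(k)$ and folds the off-diagonal terms together. \exref{ex:adj-mat} then reduces everything to three cases according to which of $\Adj(*,\epsilon\tilde*)$, $\Adj(\epsilon\tilde*,*)$ vanish, matching the trichotomy in the statement.

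I would first compute the unitary group $U(\LMR_{\#},\bar{\cdot})=U(\LL{\#}\oplus\RR{\#},\bar{\cdot})\times U(\MM{\#},\bar{\cdot})$, where $U(\LL{\#}\oplus\RR{\#},\bar{\cdot})\cong k^{\times}$ in every case. For $U(\MM{\#},\bar{\cdot})$: when $a=c=1$ the two off-diagonal modules are each $\M_b(k)$, so $\MM{\#}\cong\M_{2b}(k)$ with an involution of symplectic type and $U(\MM{\#},\bar{\cdot})\cong{\rm Sp}_{2b}(k)$; when $1<a,c$ both vanish, $\MM{\#}\cong\M_b(k)\oplus\M_b(k)$ with the factor-exchanging involution, so $\mu\bar\mu=1$ forces $\nu$ to be the inverse transpose of $\mu$ and $U(\MM{\#},\bar{\cdot})\cong\GL_b(k)$; in the mixed case exactly one off-diagonal module survives, equal to $k^a\otimes\M_b(k)\otimes k^c$ (one of $a,c$ being $1$), so $\MM{\#}$ is block-triangular with Jacobson radical $N\cong k^a\otimes\M_b(k)\otimes k^c$ and semisimple quotient $\M_b(k)\oplus\M_b(k)$ carrying the exchanging involution, and solving $\mu\bar\mu=1$ fixes the diagonal and cuts the radical part out by one $k$-linear equation, yielding $U(\MM{\#},\bar{\cdot})\cong N\rtimes\GL_b(k)$.

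Next I would handle $\Psi\Isom_{\LMR}(\#)$ via the pseudo-isometry form of Morita condensation (the obvious companion to \thmref{thm:condense}): a $\bar{\cdot}$-symmetric full idempotent $e$ of $\LMR_{\#}$ condenses $\#$ to a small weakly-Hermitian bimap over $e\LMR_{\#}e$ whose $\LMR$-linear pseudo-isometries one computes by hand, contributing only a scalar $k^{\times}$. The kernel term $Z(\LMR_{\#}^{\times},\bar{\cdot})$ is the group of $\bar{\cdot}$-symmetric central units; because $Z(\MM{\#})$ consists of scalar matrices it sits diagonally in the product above, so that passing to the image identifies that scalar $k^{\times}$ with the $\LL{\#}\oplus\RR{\#}$ factor of $U(\LMR_{\#},\bar{\cdot})$ and only one copy survives. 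Thus the image of $U(\LMR_{\#},\bar{\cdot})\times\Psi\Isom_{\LMR}(\#)$ in $\Psi\Isom(\#)$ is ${\rm Sp}_{2b}(k)\times k^{\times}$, $\GL_b(k)\times k^{\times}$, or $N\rtimes(\GL_b(k)\times k^{\times})$, respectively. Finally, $\Aut(\LMR_{\#},\bar{\cdot})/\Inn(\LMR_{\#},\bar{\cdot})\cong\Gal(k)$: by Skolem--Noether every $k$-automorphism of $\M_{2b}(k)$, of $\M_b(k)\oplus\M_b(k)$ with the factor swap excluded (it would interchange the two arguments of $\#$ incompatibly with the bimodule radical, just as in the semisimple comparison of the introduction), and of the block-triangular algebra (once one checks its radical is characteristic), is inner up to a coordinatewise field automorphism, and such field automorphisms respect the involution; and the connecting map $\Psi\Isom(\#)\to\Gal(k)$ is surjective because applying a field automorphism of $k$ entrywise is a semilinear pseudo-isometry, which simultaneously splits the extension. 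Assembling the four terms yields the three displayed groups.

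I expect the main obstacle to be the bookkeeping in the mixed case: pinning down precisely which $k$-linear relation defines the unitary elements inside the radical of $\MM{\#}$, verifying that they form a normal complement carrying the asserted $\GL_b(k)\times k^{\times}$-action, and checking that after quotienting by $Z(\LMR_{\#}^{\times},\bar{\cdot})$ one is left with all of $k^a\otimes\M_b(k)\otimes k^c$ rather than a proper subspace. This is a careful but ultimately mechanical unwinding of \propref{prop:twist} and the definition \eqref{def:involution} for this $\#$.
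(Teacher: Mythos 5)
Your proposal follows essentially the same route as the paper: it runs $\#$ through the exact sequence of \thmref{thm:pseudo}, uses \propref{prop:twist} and \exref{ex:adj-mat} to identify $\LL{\#}\cong\RR{\#}\cong k$ and the three possible shapes of $\MM{\#}$ with its involution, computes the unitary groups case by case (symplectic, factor-exchanging, and block-triangular with radical $k^a\otimes\M_b(k)\otimes k^c$), obtains $\Psi\Isom_{\LMR}(\#)\cong k^{\times}$ by Morita condensation, kills the outer term with Skolem--Noether, and splits off $\Gal(k)$ by entrywise field automorphisms. The assembly and the three resulting groups match the paper's argument, so this is correct and not a genuinely different proof.
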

\begin{proof}
Following Proposition~\ref{prop:twist}  we know 
$\LL{\#}=k$ and $\RR{\#}=k$.  Now for $\MM{\#}$ we have four cases.  

First if $a=c=1$ then $\#$ is then $\MM{\#}=\M_2(\M_b(k))$ with the adjugate involution
\begin{align*}
\overline{\begin{bmatrix}
A & B\\ C & D 
\end{bmatrix}}=\begin{bmatrix}
D^t & -B^t\\ -C^t & A^t
\end{bmatrix} 
= \begin{bmatrix} 0 & -I_b \\ I_b & 0 \end{bmatrix}
\begin{bmatrix}
A & B\\ C & D 
\end{bmatrix}^t
\begin{bmatrix} 0 & I_b \\ - I_b & 0 \end{bmatrix}.
\end{align*}
From this it follows that $U(\MM{\#})$ is isomorphic to the symplectic group ${\rm Sp}_{2b}(k)$.
So $U(\LMR_{\#})=\frac{k^{\times }\times \Sp_{2b}(k)\times k^{\times}}{\langle (s,1,s^{-1}) : s\in k\rangle}$.
Next, an outer automorphism of $\LMR_{\#}=k\oplus M_{2b}(k)\oplus k$ that commutes with the involution
$(\lambda,\mu,\rho)\mapsto(\rho,\bar{\mu},\lambda)$ must fix all three direct factors.  By the classic Skolem-Noether
theorem it follows that there are no $k$-linear outer automorphism.  As a result
$\Aut(\LMR_{\#},\bar{\cdot})/\Inn(\LMR_{\#},\bar{\cdot})\cong \Gal(k)$.  Lastly, Morita condensation
of $\#$ reduces $U\oplus V=\M_{1\times b}(k)\oplus \M_{b\times 1}(k)\cong k^{2b}$ to $k$.  
So then $\Psi\Isom_{\LMR}(\#)\cong \Psi\Isom(k\times k\bmto k)\cong k^{\times}$.  As $Z(\LMR_{\#}, \bar{\cdot})\cong k^{\times}$
it follows from Theorem~\ref{thm:pseudo} that
\begin{align*}
1\to k^{\times} & \to (k^{\times} \times {\rm Sp}_{2b}(k))\times k^{\times}\to \Psi\Isom(\#)\to \Gal(k).
\end{align*}
So ${\rm Sp}_{2b}(k)\times k^{\times}\leq \Psi\Isom(\#)\leq ({\rm Sp}_{2b}(k)\times k^{\times})\rtimes \Gal(k)$.  
By inspection we confirm $\Psi\Isom(\#)=({\rm Sp}_{2b}(k)\times k^{\times})\rtimes \Gal(k)$.\footnote{Alternatively 
observe that when $a=c=1$ the product $\#:k^{2b}\times k^{2b}\bmto k$
is none other than the an alternating nondegenerate $k$-form.  Hence, $U(\MM{\#})={\rm Sp}_{2b}(k)$ by
definition. In fact this is the case of a Heiseberg group/Lie algebra where the structure of
$\Psi\Isom(\#)={\rm \Gamma SP}_{2b}(k)$ is classically described.}

The case $a,c>1$ is nearly identical except that the constituent rings change in structure.  By 
Example~\ref{ex:adj-mat} $\Adj(*,\epsilon \tilde{*})$ and $\Adj(\epsilon\tilde{*},*)$ are trivial and
so $\MM{\#}=\MM{*}\oplus \MM{\tilde{*}}\cong \M_b(k)\oplus \M_b(k)$ with involution interchanging the two simple factors.
Thus, $U(\MM{\#})=\{X\oplus X^{-1} : X\in \GL_b(k)\}$.
This makes the exact sequence of Theorem~\ref{thm:pseudo} collapse to
\begin{align*}
	1\to k^{\times} & \to (k^{\times} \times {\rm GL}_{b}(k))\times k^{\times}\to \Psi\Isom(\#)\to \Gal(k).
\end{align*}
Again we find $\GL_b(k)\times k^{\times}<\Psi\Isom(\#)=(\GL_b(k)\times k^{\times})\rtimes \Gal(k)$.

Finally if either $a$ or $c$ is $1$ then we obtain a nontrivial nilpotent radical $N$ in $\MM{\#}$, 
which is additively isomorphic to $k^a\otimes_k M_b(k)\otimes_k k^c$.  The involution given in 
Example~\ref{ex:adj-mat} confirms $1+N$ lies in $U(\LMR_{\#},\bar{\cdot})$.  Consequently the sequence of
Theorem~\ref{thm:pseudo} becomes
\begin{align*}
	1\to k^{\times} & \to (k^a\otimes_k M_b(k)\otimes_k k^c)\rtimes (k^{\times} \times {\rm GL}_{b}(k))\times k^{\times}\to \Psi\Isom(\#)\to \Gal(k).
\end{align*}
The result is our final claim.
\end{proof}

\subsection{Claims under degeneracy}\label{sec:degenerate}

The last general concern is that not all bimaps that arise in practice can be forced to be fully nondegenerate.
Given a bimap $*:U\times V\bmto W$ we can induce an associated fully nondegenerate bimap
$(*/\sqrt{*}): U/V^{\bot}\times V/U^{\top}\bmto U*V$ where
\begin{align*}
	 (u+V^{\top})(*/\sqrt{*})(v+U^{\bot}) & = u*v.
\end{align*}
For a subset $X$ of a module $Y$, define $C_{\GL(Y)}(X)=\{g\in \GL(Y) : xg=x\}$.

\begin{thm}\label{thm:aut-deg}
For a possibly degenerate $k$-bimap $*:U\times V\bmto W$ there an epimorphism 
$\Aut(*)\to \Aut(*/\sqrt{*})$ with kernel
\begingroup
%\verb|\thinmuskip=0mu:| \par
\setlength{\thinmuskip}{0mu}
\begin{align}\label{eq:C}
	C & = C_{\GL_k(U)}(U/V^{\top})
		\times C_{\GL_k(V)}(V/U^{\bot})
			\times C_{\GL_k(W)}(U*V)
\end{align}
\endgroup
If the radicals each split in their respective groups (e.g. if $k$ is a field) then the extension 
of $C$ by $\Aut(*/\sqrt{*})$ is split.
\end{thm}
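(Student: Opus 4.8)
The plan is to realize the asserted map as restriction--and--corestriction of an autotopism to the three radicals, to identify its kernel through the cross--term identity that the radicals force on any autotopism, and to extract surjectivity and the splitting by lifting along chosen module complements.

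First I would check that every $\phi=(R_\phi^U,R_\phi^V;R_\phi^W)\in\Aut(*)$ fixes all three radicals. For $u\in V^{\top}$ one has $(uR_\phi^U)*V=(uR_\phi^U)*(VR_\phi^V)=(u*V)R_\phi^W=0$, using surjectivity of $R_\phi^V$, so $V^{\top}R_\phi^U=V^{\top}$; symmetrically $U^{\bot}R_\phi^V=U^{\bot}$, and plainly $(U*V)R_\phi^W=(UR_\phi^U)*(VR_\phi^V)=U*V$. Hence $R_\phi^U$ descends to $U/V^{\top}$, $R_\phi^V$ to $V/U^{\bot}$, and $R_\phi^W$ restricts to $U*V$; since $(u*v)R_\phi^W=(u*v)\bigl(R_\phi^W|_{U*V}\bigr)$ once $u*v\in U*V$, the triple $\bar\phi$ of these three maps satisfies the defining identity of $*/\sqrt{*}$, so $\bar\phi\in\Aut(*/\sqrt{*})$, and composition of induced maps makes $\phi\mapsto\bar\phi$ a group homomorphism.

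Next I would pin down the kernel. If $\bar\phi=1$ then $R_\phi^U$ acts trivially on $U/V^{\top}$, $R_\phi^V$ trivially on $V/U^{\bot}$, and $R_\phi^W$ fixes $U*V$ pointwise, which is exactly the condition $\phi\in C$. For the reverse inclusion I must verify that every triple $(\alpha,\beta;\gamma)\in C$ already lies in $\Aut(*)$: writing $u\alpha=u+u'$ with $u'\in V^{\top}$ and $v\beta=v+v'$ with $v'\in U^{\bot}$, the product $(u\alpha)*(v\beta)=u*v+u'*v+u*v'+u'*v'$ has its last three summands equal to $0$ because $V^{\top}*V=0=U*U^{\bot}$, so $(u\alpha)*(v\beta)=u*v=(u*v)\gamma$ as $\gamma$ fixes $U*V$; the inverse triple $(\alpha^{-1},\beta^{-1};\gamma^{-1})$ lies in $C$ as well, so $(\alpha,\beta;\gamma)$ is an autotopism. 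Thus $C\le\Aut(*)$ and $\ker(\phi\mapsto\bar\phi)=C$ exactly.

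The remaining, and principal, task is surjectivity, and this is where the splitting of the radicals is used. Given $\psi=(\psi_U,\psi_V;\psi_W)\in\Aut(*/\sqrt{*})$, choose $k$--module complements $U=V^{\top}\oplus U_0$, $V=U^{\bot}\oplus V_0$, and $W=(U*V)\oplus W_0$; these exist precisely when the radicals split, in particular over any field. Transport $\psi_U,\psi_V$ through the isomorphisms $U_0\cong U/V^{\top}$, $V_0\cong V/U^{\bot}$ to automorphisms of $U_0$ and $V_0$, extend them by the identity on $V^{\top}$ and $U^{\bot}$, and extend $\psi_W$ by the identity on $W_0$; call the resulting triple $s(\psi)$. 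Running the cross--term identity of the previous paragraph in reverse shows $s(\psi)\in\Aut(*)$ with $\overline{s(\psi)}=\psi$, so $\phi\mapsto\bar\phi$ is onto. Because $s(\psi)$ is block--diagonal for these decompositions and is the identity on the radical summands, $s(\psi_1)s(\psi_2)=s(\psi_1\psi_2)$, so $s$ is a group--theoretic section and the extension $1\to C\to\Aut(*)\to\Aut(*/\sqrt{*})\to1$ splits. I expect the only real care needed, beyond bookkeeping the identifications $U_0\cong U/V^{\top}$ and $V_0\cong V/U^{\bot}$ so that the transported maps compose correctly, to be precisely the existence of a lift: absent a splitting of a radical, an autotopism of $*/\sqrt{*}$ need not extend to one of $*$, which is exactly why that hypothesis is attached to the surjectivity and the splitting.
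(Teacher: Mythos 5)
Your proposal is correct, and for the parts the paper actually writes out --- invariance of the three radicals under any autotopism, the induced homomorphism to $\Aut(*/\sqrt{*})$, and the identification of the kernel with $C$ (both inclusions, using that $V^{\top}*V=0=U*U^{\bot}$ to check $C\subseteq\Aut(*)$) --- it follows essentially the same route. Where you diverge is that you go further: the paper's proof stops after establishing $\ker=C$ and never argues surjectivity or the splitting, whereas you construct an explicit section $s$ by choosing complements $U=V^{\top}\oplus U_0$, $V=U^{\bot}\oplus V_0$, $W=(U*V)\oplus W_0$, extending by the identity on the radical summands, and checking $s$ is a homomorphism with $\overline{s(\psi)}=\psi$. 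That argument is sound (the cross-term computation $u*v=u_0*v_0$ makes $s(\psi)$ an honest autotopism), and it simultaneously yields surjectivity and the splitting. One point worth flagging: you correctly observe that this lifting argument needs the radicals to split, so as written you only obtain the epimorphism under that hypothesis, while the theorem asserts an epimorphism unconditionally and reserves the splitting hypothesis for the section; the paper's own proof offers no unconditional surjectivity argument either, so your reading --- that surjectivity may genuinely fail when a radical does not split --- is the defensible one, and your proof is if anything more complete than the one in the paper.
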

\begin{proof} Let $\phi\in \Aut(*)$.  It follows that $V^{\top} R_{\phi}^U\leq V^{\top}$, $U^\bot R_{\phi}^V\leq U^{\bot}$ and 
$(U*V)R_{\phi}^W\leq U*V$.  Therefore $\phi$ factors through $\hat{\phi}\in \Aut(U/V^{\top})\times \Aut(V/U^{\top})\times \Aut(U*V)$
and $\hat{\phi}\in \Aut((*/\sqrt{*})$.  This is a homomorphism with kernel living inside 
$C=C_{\GL_k(U)}(U/V^{\top})\times C_{\GL_k(V)}(V/U^{\bot})\times C_{\GL_k(W)}(U*V)$.  Suppose $\phi\in C$.  Then
for $u\in U$ and $v\in V$,
\begin{align*}
	uR_{\phi}^U*vR_\phi^V & = u*v=(u*v)R_{\phi}^W.
\end{align*}
So $\phi\in \Aut_k(*)$ and the proof is complete.
\end{proof}

%==================================================================================================
%
%==================================================================================================
\section{Applications to isomorphisms in algebra}\label{sec:archetype}

At last we circle back to claims of the introduction concerning automorphisms of filtered algebras and groups, and the example
in Corollary~\ref{coro:tri}.  We start by exploring an archetype of 
nilpotent algebras which resembles block upper triangular matrices but which is so varied that its variety has a dimension
equal to the dimension of the varieties of all associative and Lie algebras (and in the group case it is a logarithmically dense
subset of all groups).  These archetypes we denote by $T(*;S)$ and $U(*;S)$.  Then we reduce the
structure of $\Aut(T(*;S))$ and $\Aut(U(*;S))$ to $\Aut(*)$ and $\Psi\Isom(*)$ respectively
(Theorem~\ref{thm:Aut(T)}).  Then as a demonstration of Theorems~\ref{thm:auto} and \ref{thm:Skolem-Noether} 
we derive Corollary~\ref{coro:tri}.  

Many methods of this section are folklore that has arisen independently within the 
contexts of rings, algebras, and groups. They can also be interpreted as applications of more general 
correspondences that turn nilpotent algebraic structures into bilinear maps; see Bourbaki
for algebras \cite{Bourbaki:algebra}*{Chapter III\S 3}, and for groups see Higman's survey \cite{Higman:graded}.
Our explicit use of the archetype $\mathcal{T}(*;S)$ demonstrates that every possible bimap enters the picture
of isomorphisms in algebra.

\subsection{The archetypes $\mathcal{T}(*;S)$, $U(*;S)$, and $D(*;S)$}
For a $k$-bimap $*:U\times V\bmto W$ and a multiplicatively closed nonempty set $S\subseteq k$ we defined an algebraic structure:
\begin{align*}
	\mathcal{T}(*)=\mathcal{T}(*;S) & = \left\{ \begin{bmatrix} s & u & w \\ 0 & s & v \\ 0 & 0 & s \end{bmatrix} : 
		s\in S, u\in U, v\in V, w\in W \right\}.
\end{align*}
with multiplication understood as formal matrices, using $u*v$ wherever $u$ meets $v$. 
From here forward we use $S=k$ to identify the associative product $xy$ as formal matrices, 
$S=\{0\}$ to indentify the Lie product $[x,y]_+=xy-yx$, and $S=\{1\}$ to identify the group commutation product
$[x,y]_{\times}=x^{-1}y^{-1}xy$ which sits atop the group structure of the associative matrix product.  
For clarity we list these products in order for $\mathcal{T}(*;S)$.
\begin{align}
\begin{bmatrix} s & u & w \\ 0 & s & v \\ 0 & 0 & s \end{bmatrix}
	\begin{bmatrix} s' & u' & w' \\ 0 & s' & v' \\ 0 & 0 & s' \end{bmatrix}
	& =
	\begin{bmatrix} ss' & s'u+su' & s'w+u*v'+sw' \\ 0 & ss' & s'v+sv' \\ 0 & 0 & ss' \end{bmatrix}\label{eq:ass}\\
\left[\begin{bmatrix} 0 & u & w \\ 0 & 0 & v \\ 0 & 0 & 0 \end{bmatrix},
	\begin{bmatrix} 0 & u' & w' \\ 0 & 0 & v' \\ 0 & 0 & 0 \end{bmatrix}\right]_+
	& =
	\begin{bmatrix} 0 & 0 & u*v'-u'*v \\ 0 & 0 & 0 \\ 0 & 0 & 0 \end{bmatrix}\label{eq:comm}\\
\left[\begin{bmatrix} 1 & u & w \\ 0 & 1 & v \\ 0 & 0 & 1 \end{bmatrix},
	\begin{bmatrix} 1 & u' & w' \\ 0 & 1 & v' \\ 0 & 0 & 1 \end{bmatrix}\right]_{\times}
	& =
	\begin{bmatrix} 1 & 0 & u*v'-u'*v \\ 0 & 1 & 0 \\ 0 & 0 & 1 \end{bmatrix}\label{eq:comm-grp}
\end{align}
To write uniform proofs we use the the operations $\{\cdot,+,-,0\}$ for each of the above algebraic structures 
$\mathcal{T}=\mathcal{T}(*;S)$.  In this way $\mathcal{T}^2$ represents 
$[\mathcal{T}(*;\{0\}),\mathcal{T}(*;\{0\})]_+$ and $[\mathcal{T}(*;\{1\}),\mathcal{T}(*;\{1\})]_{\times}$.
Also, {\em ideal} is with respect to $\{\cdot,+,-,0\}$. So for example in a group $G$ an ideal $N$
satisfies $N\cdot G,G\cdot N\leq N$ if, and only if, $[N,G],[G,N]\leq N$; thus, ideal in the group
context agrees with normal.
\medskip

The archetypes  $\mathcal{T}(*;S)$ are one of several possible constructions.  One popular variation is to involve 
a $\tau$-weakly Hermitian product $*:U\times V\bmto W$ and define the subobject
\begin{align*}
	\mathcal{U}(*;S) & = \left\{ \begin{bmatrix} s & u & w \\ 0 & s & uR_{\tau}^U \\ 0 & 0 & s \end{bmatrix} : s\in S, u\in U, w\in W\right\}.
\end{align*}
This is especially popular when $*$ is alternating or symmetric.  In that case it leads
to (skew)-commutative associative rings and further groups and Lie algebras.  We can further vary the operators on the diagonal to be independent, or we can extend to
$(d\times d)$-matrices when we have a collection of distributive products $U_{ij} \times U_{jk}\bmto U_{ik}$, for $1\leq i<j\leq d$.
If we also include some knowledge of ${\rm Ext}_{\CC{}}(W, U\oplus V)$ we sample across even more of algebra.
A final archetype is use only bimaps where $U_*=V_*$ which are not required to be weakly Hermitian and then insist that $u=v$ on the subdiagonal of the matrices.  Those we denote by $D(*;S)$.
\medskip

Despite humble origins, the constructions such as $\mathcal{T}(*;S)$ occupy a substantial portion of the possible variability in common forms
of algebra, such as groups, and associative or Lie rings.  We can even count the variability.

Suppose $k$ is a commutative ring and $S\subseteq k$.  
Then for a $n=a+b+c$, the pairwise non-isomorphic 
$\mathcal{T}(*:k^a\times k^b\bmto k^c;S)$, is a variety of dimension $n^3/27+\Theta(n^2)$.
This is because we can choose $a\approx b\approx c\approx n/3$ and so we are choosing a tensor
from the tensor product space $k^{a}\otimes k^b\otimes k^c\cong k^{n^3/27}$.  Isomorphism does
not influence the number of tensors greatly because $\GL_a\times \GL_b\times \GL_c$ is an
an $O(n^2)$-dimensional algebraic group.
Likewise the variety of $\mathcal{U}(*:k^a\times k^a\bmto k^b;S)$ has dimension 
$2n^3/27+\Theta(n^2)$, for $n=a+b$.  

The range of options for our archetypes is remarkable because Neretin \cite{Neretin:enum}
shows that the variety of all commutative, resp. Lie, $k$-algebras of dimension $n$ is $2n^3/27+O(n^{3-\epsilon})$, 
for some $1\geq \epsilon > 0$ (presently $\epsilon=1/2$).  Identical bounds hold for finite groups 
\citelist{\cite{Higman:enum}\cite{Sims:enum}\cite{Pyber:enum}} and finite commutative rings \cite{Poonen:enum}.
For associative algebras the dimension of the variety jumps to $4n^3/27+O(n^{3-\epsilon})$ 
\citelist{\cite{Kruse-Price:enum}\cite{Neretin:enum}}.  Here
the archetype $D(*;S)$ determines a variety of dimension $4n^3/27+\Theta(n^2)$.  

It is fair to argue that the archetypes selected are limited in structure, such as nilpotence class $2$.  Nevertheless,
these are a substantial component of algebra and often a base case for inductions (nilpotence of class $1$
is abelian and largely unrelated to general nilpotence.)

\subsection{The automorphisms of the archetypes}
We now show how the objects $\mathcal{T}(*;S)$ depend on $*$.  Of course $*$ is
in their definition but we mean to recover $*$ from abstract properties of groups, rings, and algebras.
The importance of this step in considering isomorphism is that although the objects $\mathcal{T}(*;S)$ 
have been nicely represented, representations of an object do not in general carry over to representations
of their automorphism group.  Arguments for the archetypes $U(*;S)$ and $D(*;S)$ are largely unchanged.

Now we assume $N$ is the nilpotent radical of $\mathcal{T}:=\mathcal{T}(*;S)$, i.e. $N=\mathcal{T}(*;S)$ for $|S|=1$, and for $S=k$
$N=\mathcal{T}(*;\{0\})$ but treated as an ideal of the associative algebra $\mathcal{T}(*;S)$.  
Define  $\#:N/N^2\times N/N^2\bmto N^2$ by
\begin{align*}
	(x+N^2)\cdot (y+N^2) & = x\cdot y 
\end{align*}
The formulas of \eqref{eq:ass}--\eqref{eq:comm-grp} confirm $\circ$ is $k$-bilinear, $N^2\cong W$, and $N/N^2\cong U\oplus V$.
Indeed we can describe $\#:(U\oplus V)\times (U\oplus V)\bmto W$ by
\begin{align*}
	(u_1,v_1)\# (u_2,v_2) & = \left\{\begin{array}{cc}
		u_1*v_2, & S=k;\\
		 u_1*v_2-u_2*v_1, & S=\{0\} \textnormal{ or } S=\{1\}.
		\end{array}\right.
\end{align*}
We prove:

\begin{thm}\label{thm:Aut(T)}
Fix a field $k$. For each fully nondegenerate $k$-bimap $*:U\times V\bmto W$ the following holds.
\begin{align*}
	\Aut(\mathcal{T}(*;S)) & \cong \left\{
	\begin{array}{cc}
	 \hom_k(U\oplus V,W)\rtimes \Aut_k(*)\rtimes \Gal(k) & S=k;\\
	 \hom_k(U\oplus V,W)\rtimes \Psi\Isom_k(\#)\rtimes \Gal(k) & S=\{0\};\\
	 	 \hom_k(U\oplus V,W)\rtimes \Psi\Isom_k(\#) \Gal(k) & S=\{1\},2k=k.\\
	 \end{array}\right.	
\end{align*}
\end{thm}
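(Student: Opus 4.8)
The plan is to recover the bimap $*$ --- or its companion $\#$ --- from the intrinsic structure of $\mathcal{T}=\mathcal{T}(*;S)$, so that an automorphism is forced to act on it. First I would pin down the characteristic ingredients. Let $N$ denote the nilpotent radical of $\mathcal{T}$: this is all of $\mathcal{T}$ when $|S|=1$ and the Jacobson radical when $S=k$; in every case $N$, $N^2$, and $Z(\mathcal{T})$ are characteristic. The formulas \eqref{eq:ass}--\eqref{eq:comm-grp} already recorded give natural identifications $N^2\cong W$ and $N/N^2\cong U\oplus V$, under which the induced biadditive product on $N/N^2$ is exactly the map $\#$. When $S=k$ I would exploit one more feature of the associative structure: the one-sided annihilators inside $N$ are $\{x\in N:xN=0\}=V\oplus W$ and $\{x\in N:Nx=0\}=U\oplus W$, so $U$ and $V$ occur as distinct characteristic subspaces of $N/N^2$; this is the reason the associative case sees the finer object $*$ (autotopisms) while the Lie and group cases see only $\#$ (pseudo-isometries, since the commutator form is alternating and does not separate $U$ from $V$). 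Finally, the centroid of $\#$ coincides with that of $*$ by \propref{prop:twist}, so the ground ring is recovered intrinsically.

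The next step is to build the restriction map. An automorphism $\phi$ of $\mathcal{T}$ preserves $N$, $N^2$, and the annihilators, hence induces a triple $(\phi_U,\phi_V;\phi_W)$ of additive bijections respecting $\#$ (respectively $*$). Applying the centroid argument of \thmref{thm:auto}(D) to $\#$ (respectively $*$), $\phi$ determines a ring automorphism $\sigma\in\Gal(k)$ of the centroid, and the triple is $\sigma$-semilinear; dividing out by $\Gal(k)$ leaves an element of $\Psi\Isom_k(\#)$ (respectively $\Aut_k(*)$). This yields a homomorphism $\Theta\colon\Aut(\mathcal{T})\to\Psi\Isom_k(\#)\rtimes\Gal(k)$ (respectively $\Aut_k(*)\rtimes\Gal(k)$). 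For surjectivity and a splitting I would exhibit explicit sections: a $k$-linear pseudo-isometry or autotopism $(\psi_U,\psi_V;\psi_W)$ lifts to the map of $\mathcal{T}$ acting as $\psi_U,\psi_V,\psi_W$ on the three off-diagonal slots and as the identity on the $S$-slot, which is an automorphism by direct inspection of \eqref{eq:ass}--\eqref{eq:comm-grp}; and $\Gal(k)$ lifts by its coordinatewise action on $\mathcal{T}$ (using that the relevant $*$, e.g. matrix multiplication in \corref{coro:tri}, carries this semilinear structure).

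It then remains to identify $\ker\Theta$, the automorphisms that induce the identity on $N/N^2$, on $N^2$, and on $\mathcal{T}/N$. Such a $\phi$ fixes $N^2\subseteq Z(\mathcal{T})$ pointwise and sends each $x$ to $x\cdot\delta(\bar x)$ in the group case, or to $x+\delta(\bar x)$ in the two ring cases, where $\bar x$ is the class of $x$ in $N/N^2$ and $\delta\colon N/N^2\to N^2$ is an additive map; imposing compatibility with the $k$-scalars (equivalently, with the product of a diagonal scalar by an off-diagonal element) forces $\delta$ to be $k$-linear, so $\ker\Theta$ is identified with $\hom_k(N/N^2,N^2)=\hom_k(U\oplus V,W)$, and conversely each such $\delta$ gives an automorphism. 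A short computation shows this kernel is an abelian normal subgroup, written additively, on which $\Psi\Isom_k(\#)\rtimes\Gal(k)$ (respectively $\Aut_k(*)\rtimes\Gal(k)$) acts through its action on $U\oplus V$ and $W$; combined with the splitting of the previous paragraph this gives the stated iterated semidirect product.

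The main obstacle is the group case $S=\{1\}$, which is exactly where the hypothesis $2k=k$ is needed. A priori an automorphism of $\mathcal{T}(*;\{1\})$ is merely a group automorphism, so I must argue it is still controlled by the data $(N/N^2,N^2,\#)$. In characteristic different from $2$ the class-$2$ group $\mathcal{T}(*;\{1\})$ and the Lie algebra $\mathcal{T}(*;\{0\})$ correspond under the Baer/Lazard dictionary --- inverting $2$ lets one pass between the group commutator and the Lie bracket through the truncated Baker--Campbell--Hausdorff formula --- so their automorphism groups coincide and the Lie-case analysis transfers verbatim; when $2k\neq k$ the squaring map $(u,v)\mapsto u*v$ becomes genuine extra structure and the reduction breaks. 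The other point that demands care is the linearity bookkeeping in the kernel step: showing that the additive shifts $\delta$ arising there are necessarily $k$-linear, and that the semilinearity character extracted from the centroid agrees with the induced action on $\mathcal{T}/N$; both are mechanical given \remref{rem:notation} but are what make the three displayed answers come out on the nose.
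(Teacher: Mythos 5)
Your proposal is correct and follows essentially the same route as the paper: restrict automorphisms along the characteristic series $N\supseteq N^2$ to obtain a homomorphism onto $\Aut_k(*)$ resp.\ $\Psi\Isom_k(\#)$ (split by the obvious coordinatewise lifts), and identify the kernel with $\hom_k(U\oplus V,W)$ via the maps $1+\tau$. The only variations are local: you separate $U$ from $V$ in the associative case using the one-sided annihilators of $N$ where the paper instead computes the block form of $\Aut(\#)$ and intersects with the diagonal condition $f=g$, and you invoke the Baer--Lazard correspondence for $S=\{1\}$ where the paper writes down the explicit isomorphism $\mathcal{T}(*;S)\cong\mathcal{U}(\tfrac{1}{2}\#;S)$ that realizes the same idea.
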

\begin{proof}
Continue with the notation above.

Since both $N$ and $N^2$ are characteristic ideals of $\mathcal{T}$, 
as $k$-bimodules $N/N^2\cong U\oplus V$ and $N^2\cong W$.
So we can induce each automorphism $\alpha\in \Aut_k(\mathcal{T})$ as 
$(R_{\alpha}^{U\oplus V},R_{\alpha}^W)\in \Aut_k(U\oplus V)\times \Aut_k(W)$.  
As $x\alpha\cdot y\alpha=(x\cdot y)\alpha$ it follows that
\begin{align*}
	(u_1,v_1)R_{\alpha}^{U\oplus V} \# (u_2,v_2)R_{\alpha}^{U\oplus V} & = (u_1,v_1)\#(u_2,v_2) R_{\alpha}^W.
\end{align*}
This shows we should consider the subgroup ${^2\Aut}_k(*)=\{\phi=(f,g;h)\in \Aut_k(\#) : f=g\}$.
We have defined a group homomorphism
$\Phi:\Aut_k(\mathcal{T})\to {^2\Aut}_k(\#)\rtimes \Gal(k)$ where
\begin{align*}
	\alpha\Phi & = (R_{\alpha}^{U\oplus V},R_{\alpha}^{U\oplus V}; R_{\alpha}^W).
\end{align*}
We will prove $\Phi$ is surjective and split, so $\Aut(\mathcal{T})\cong \ker \Phi\rtimes({^2\Aut}_k(\#)\rtimes \Gal(k))$.

In the case of $S=k$, $\#$ is degenerate but full, and so
\begin{align*}
	\Aut(\#) & = \left\{\left(
		\begin{bmatrix} 
			R_{\phi}^U & \Lambda_2 \\
			0 &  \Lambda_1
		\end{bmatrix},
		\begin{bmatrix} 
			\Lambda_3 & 0 \\
			\Lambda_4 &  R_{\phi}^V
		\end{bmatrix}; 
		R_{\phi}^W \right) :  \begin{array}{cc} \phi\in \Aut(*)\\ \Lambda_1\in \Aut(V), & \Lambda_2:U\to V,\\
			\Lambda_3\in \Aut(U), & \Lambda_4:V\to U\end{array}  \right\};\\
{^2 \Aut}(\#) & = \left\{\left(
		\begin{bmatrix} 
			R_{\phi}^U & 0 \\
			0 &  R_{\phi}^V
		\end{bmatrix},
		\begin{bmatrix} 
			R_{\phi}^U & 0 \\
			0 &  R_{\phi}^V
		\end{bmatrix}; 
		R_{\phi}^W \right) :  \phi\in \Aut(*) \right\}\cong \Aut(*).
\end{align*}
Furthermore, for every $\phi\in \Aut(*)$,
\begin{align}\label{def:lift}
	\begin{bmatrix} s & u & w \\ 0 & s & v \\ 0 & 0 & s \end{bmatrix} & \mapsto 
	\begin{bmatrix} s & uR_{\phi}^U & wR_{\phi}^W \\ 0 & s & vR_{\phi}^V \\ 0 & 0 & s \end{bmatrix}.
\end{align}
defines an automorphism of $\mathcal{T}(*;S)$.  So $\iota:\Aut(*)\hookrightarrow \Aut(\mathcal{T})$ such
that $\iota\Phi$ is the identity.

On the other hand if $|S|=1$ then $\#$ is weakly-Hermitian with respect to $(1,1;-1)$ and 
${^2\Aut}_k(*)=\Psi\Isom_k(*)$.  Assuming $2k=k$, it follows that $\mathcal{T}(*;S)\cong \mathcal{U}(\frac{1}{2}\#;S)$,
via
\begin{align*}
	\begin{bmatrix} s & u & w \\ 0 & s & v \\ 0 & 0 & s \end{bmatrix}
		& \mapsto
			\begin{bmatrix} s & u\oplus v & w \\ 0 & s & u\oplus v \\ 0 & 0 & s \end{bmatrix}.		
\end{align*}
Evidently each $\phi\in \Psi\Isom(\#)$ lifts to an automorphism of $\mathcal{U}(\frac{1}{2}\#;S)$ as in \eqref{def:lift}.
It therefore also lifts to $\Aut(\mathcal{T}(*;S))$.
\smallskip

The kernel of $\Phi$ consists of automorphisms $\alpha$ which centralize $N/N^2$ and $N^2$.  As $N^3=0$ 
it follows that $(1-\alpha):X\mapsto (X-X\alpha)$ is a linear mapping with kernel containing $N^2$ and 
imaged contained in $N^2$.  That is, $\ker\Phi\hookrightarrow \hom(N/N^2,N^2)$.  Indeed, if
$\tau:N/N^2\to N^2$ is $k$-linear then define $1+\tau$ by
\begin{align*}
	(1+\tau):
	\begin{bmatrix}
	s & u & w \\ 0 & s & v \\ 0 & 0 & s 
	\end{bmatrix}
	\mapsto
	\begin{bmatrix}
	s & u & w+\tau(u\oplus v) \\ 0 & s & v \\ 0 & 0 & s 
	\end{bmatrix}.	
\end{align*}
This is a $k$-linear automorphism of $\mathcal{T}(*;S)$ and furthermore in the kernel of $\Phi$.
Therefore $\ker\Phi\cong \hom_k(U\oplus V,W)$.
\end{proof}

\subsection{$\Aut(\mathcal{T}_{a,b,c}(k;S))$ [Proof of Corollary~\ref{coro:tri}]}
Apply Theorem~\ref{thm:Aut(T)} to reduce the question to describing $\Aut(*)$ in the associative ring case, and $\Psi\Isom(\#)$ in
the Lie and group case.  Apply Theorems~\ref{thm:Skolem-Noether} and Theorem~\ref{thm:pseudo} respectively.
\hfill $\Box$

\subsection{General $\mathcal{T}(*;S)$}
The isomorphisms of $\mathcal{T}_{a,b,c}(k;S)$ leverage the complete understanding of
the bimap of $(a,b,c)$-matrix multiplication.  To study $\Aut(\mathcal{T}(*;S))$
we can still use matrices, only we expect an approximation not a complete picture.

We begin by recalling Figure~\ref{fig:scalar-diagram}.  In this diagram we relate a general bimap $*$ to tensors over
$\MM{*}$ and versors over $\LL{*}$ and $\RR{*}$ respectively.    Passing to the archetypes we see this correspondence
carried over into other categories such as rings, groups, and Lie algebras; see  Figure~\ref{fig:tensor-versor-triangle}.   The use of matrices is in fact honest
as we can represent tensor and versor products with coordinates in a $k$-vector space.
Therefore the archetypes such as $\mathcal{T}(\otimes_{\MM{*}};S)$ can be treated as
quotients of $\mathcal{T}_{a,b,c}(k;S)$, and similarly with versors.  But for
our purpose it simply helps us visualize the relationship of general archetypes
to those which come from matrices.

\smallskip
\begin{figure}[!htbp]
\begin{center}
\begin{tikzpicture}[node distance = 3cm]

\node (ZT) at (-3cm,1.5cm) {
	% Z-tensor
	\begin{tikzpicture}[scale=0.10]
		\draw (0,21)  -- (21,0);
		\draw[step=1cm,black!25,very thin] (0,0) grid (21,21);		
		\fill[black] (13,21) rectangle (12,9);
		\fill[black!50] (21,9) rectangle (13,8);
		\fill[black!25] (21,21) rectangle (13,9);
%		\draw[step=1cm,white,very thin,dash pattern = on 0.05mm off 1mm] (13,9) grid (21,21);
		\node[fill=white] (ZLVt) at (6,6) {$\mathcal{T}(\otimes)$};
	\end{tikzpicture}
};

\node (MT) at (-3cm,-1.5cm) {
	% M-tensor
	\begin{tikzpicture}[scale=0.10]
		\draw (0,12)  -- (12,0);
		\draw[step=1cm,black!25,very thin] (0,0) grid (12,12);
		\fill[black] (8,12) rectangle (6,6);
		\fill[black!50] (12,6) rectangle (8,4);
		\fill[black!25] (12,12) rectangle (8,6);
		\node[fill=white] (MTt) at (5,-5) {$\mathcal{T}(\otimes_{\MM{*}})$};
	\end{tikzpicture}
};

\node (O) at (0cm,0cm) {
% origoinal
	\begin{tikzpicture}[scale=0.10]
		\draw[step=1cm,lightgray,very thin] (0,1) grid (20,2);
		\draw[step=1cm,lightgray,very thin] (4,0) grid (16,1);
		\fill[black] (0,1) rectangle (12,2);
		\fill[black!50] (12,1) rectangle (20,2);
		\fill[black!25] (4,0) rectangle (16,1);
		\node[fill=white] (ZLVt) at (10,6) {$\mathcal{T}(*)$};		
	\end{tikzpicture}
};

\node (LLV) at (1cm,3cm) {
	% left L-versor
	\begin{tikzpicture}[scale=0.10]
		\draw (0,11)  -- (11,0);
		\draw[step=1cm,lightgray,very thin] (0,0) grid (11,11);
		\fill[black] (7,11) rectangle (3,8);
		\fill[black!50] (11,8) rectangle (7,4);
		\fill[black!25] (11,11) rectangle (7,8);
		\node[fill=white] (ZLVt) at (5,15) {$\mathcal{T}({_{\LL{*}}\lversor})$};
	\end{tikzpicture}
};

\node (ZLV) at (3.5cm, 2cm) {
% left Z-versor
	\begin{tikzpicture}[scale=0.10]
		\draw (0,25)  -- (25,0);
		\draw[step=1cm,lightgray,very thin] (0,0) grid (25,25);
		\fill[black] (13,25) rectangle (1,24);
		\fill[black!50]  (25,24) rectangle (13,12);
		\fill[black!25] (25,25) rectangle (13,24);
		\node[fill=white] (ZLVt) at (7,7) {$\mathcal{T}(\lversor)$};
	\end{tikzpicture}
};

\node (RRV) at (1cm, -3cm) {
% right R-versor
	\begin{tikzpicture}[scale=0.10]
		\draw (0,12)  -- (12,0);
		\draw[step=1cm,lightgray,very thin] (0,0) grid (12,12);
		\fill[black] (10,12) rectangle (6,6);
		\fill[black!50] (12,6) rectangle (10,2);
		\fill[black!25] (12,12) rectangle (10,6);
		\node[fill=white] (RRVt) at (-3,3) {$\mathcal{T}(\rversor_{\RR{*}})$};
	\end{tikzpicture}
};

\node (ZRV) at (3.5cm, -1cm) {
% right Z-versor
	\begin{tikzpicture}[scale=0.10]
		\draw (0,21)  -- (21,0);
		\draw[step=1cm,lightgray,very thin] (0,0) grid (21,21);
		\fill[black] (20,21) rectangle (12,9);
		\fill[black!50] (21,9) rectangle (20, 1);
		\fill[black!25] (21,21) rectangle (20,9);
		\node[fill=white] (ZRVt) at (6,6) {$\mathcal{T}(\rversor)$};
	\end{tikzpicture}
};
\draw[->] (ZT) -- (MT);
\draw[->] (ZT) -- (O);
\draw[->] (MT) -- (O);
\draw[->] (O) -- (LLV);
\draw[->] (O) -- (ZLV);
\draw[->] (LLV) -- (ZLV);
\draw[->] (O) -- (RRV);
\draw[->] (O) -- (ZRV);
\draw[->] (RRV) -- (ZRV);

\end{tikzpicture}
\end{center}
\caption{We trap  $\mathcal{T}(*)$ inside a triple of universal constructions.
Attaching rings $\LL{*}$, $\MM{*}$, $\RR{*}$ the triple contracts.  The tighter the constriction the more the 
surrounding members influence the properties of $\Aut(\mathcal{T}(*))$.
For $\mathcal{T}_{abc}(k)$ the triple contracts to the center and we obtain a perfect
understanding of $\Aut(\mathcal{T}_{abc}(k))$.}\label{fig:tensor-versor-triangle}
\end{figure}
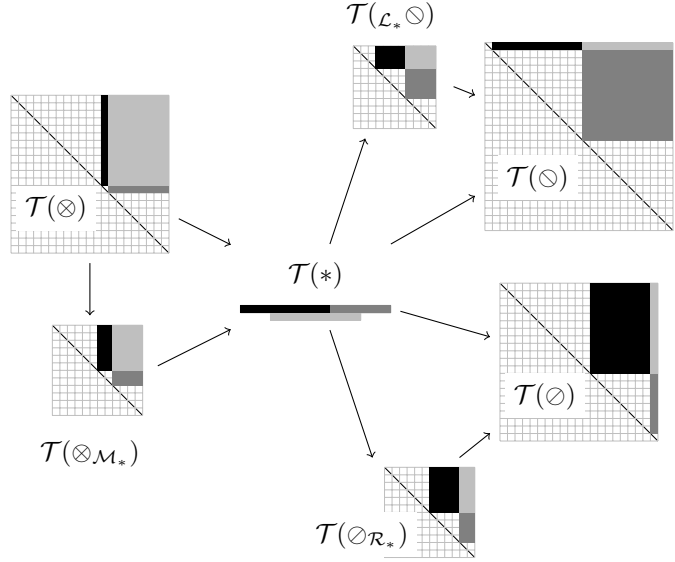

To understand the significance of Figure~\ref{fig:tensor-versor-triangle} we begin from 
the nilpotent quotient method.  The observation is that automorphisms and isomorphisms
of nilpotent groups, rings, and algebras can be approached inductively by treating them as quotients of relatively
free objects $F$ (relative to some variety such as nilpotent class $2$, then class 3, etc.).  If we can describe the automorphism
groups $\Aut(F)$, in the respective categories, and $A\cong F/N$, then $\Aut(A)$ can be recovered as
the stabilizer in $\Aut(F)$ of $N$.  Likewise isomorphism between quotients $F/N_1$ and $F/N_2$
is determined by whether or not $N_1$ and $N_2$ are in the same $\Aut(F)$-orbit.  The idea for such a
description of isomorphisms was seen early on in lectures of G. Higman \cite{Higman:chic}.  This
is the foundation of enumeration methods estimating the number of groups and algebras \citelist{\cite{Higman:enum}\cite{Kruse-Price:enum}\cite{Poonen:enum}\cite{Neretin:enum}} and
it is the leading method to compute automorphisms and isomorphisms  
\citelist{\cite{HNVL:NQ}\cite{Newman:NQ}\cite{OBrien}\cite{Eick:NQ}}.
\smallskip

We now observe the nilpotent quotient methods are just 1/3 of the whole picture.  Consider Figure~\ref{fig:tensor-versor-triangle}.
In the middle we place a group (or algebra) such as $\mathcal{T}(*)$.
The original nilpotent quotient method corresponds to lifting the problem to $\Aut(F)$, where we illustrate this
using $F=\mathcal{T}(\otimes)$.  Using tensor and versors we can construct two dual approaches {\em embedding} 
$\mathcal{T}(*)$ into $\mathcal{T}(\lversor)$ and $\mathcal{T}(\rversor)$.

The last important ingredient is to make use of rings $\LL{*}$, $\MM{*}$, and $\RR{*}$ that are designed around
the properties of $*$ (defined in Section~\ref{sec:prelims}).  For example, in matrix products $\M_{a\times b}(k)\times \M_{b\times c}\bmto\M_{a\times c}(k)$
we choose to tensor and versor with $\MM{*}=\M_b(k)$, $\LL{*}=\M_a(k)$, and $\RR{*}=\M_c(k)$.  As we show in 
Theorem~\ref{thm:universal}, each bimap $*$ has three rings associated to it in a universal sense so that
we can pass to smaller tensor and versor products.
We see this as the area shrinks in the corresponding formal matrices in Figure~\ref{fig:tensor-versor-triangle}.
This makes the nilpotent quotient/embedding methods aware of greater structure in the product of $\mathcal{T}(*)$.
Using the exact sequences of autotopism groups created here we can give specific descriptions of $\Aut(\otimes_{\MM{*}})$,
$\Aut(\lversor_{\LL{*}})$, and $\Aut(_{\RR{*}}\rversor)$.  Already the use of $\Aut(\otimes_{\MM{*}})$
was found in \cite{LW} to reduce the cost of isomorphism testing of quotients of $\mathcal{T}_{1,m,1}(p^e)$ 
from the prior cost of $O(p^{c(me)^2})$ to just $O((me)^6 \log^2 p)$ operations.  This was subsequently generalized
to a general method in \cite{BW:autotopism}.  The addition of versors opens these methods up to larger families of groups. 

\subsection{General automorphisms}\label{sec:radicals}
We close by looking at the implications on automorphisms of general algebraic objects.  We will tell the story for
groups but remark that it applies also to nonassociative rings and nonassociative loops by attaching
an associated graded algebra.  In the case of loops that was only recently seen to be possible in the
exciting work of Mostovoy \cite{Mostovoy}.

Following \cite{Wilson:alpha}, let  $M=\langle M,+,0,\prec\rangle$ denote a pre-ordered commutative monoid, e.g. $\mathbb{N}^c$ with
the point-wise or lexicographic partial order.
A {\em filter} $\phi:M \to 2^G$ on a group $G$ is a function into the subgroups such that
\begin{align*}
	(\forall & s,t\in M) & [\phi_s,\phi_t ] & \leq \phi_{s+t} & s\prec t & \Rightarrow \phi_s\geq \phi_t.
\end{align*}
The next theorem can be seen as a generalization of Theorem~\ref{thm:Aut(T)} to arbitrary filters.
In particular not only does it allow us to look at groups with general nilpotence class, it allows us to
insist that they are refined so that every associated bimap has semisimple rings $\LL{*}$, $\MM{*}$, 
and $\RR{*}$.  In particular this means we can invoke the Morita and Skolem-Noether theorems we have proved in 
Sections~\ref{sec:Morita}--\ref{sec:universal}.

\begin{thm}[\cite{Wilson:alpha}\cite{Wilson:Lie}]\label{thm:filter}
Fix a group $G$ with finite chain condition.
\begin{enumerate}[(i)]
\item
To every filter $\phi:M\to 2^G$  into the subgroups of $G$ there is a naturally induced $M$-graded
Lie algebra $L(\phi)=\bigoplus_{s\neq 0} L_s(\phi)$, $L_s(\phi)=\phi_s/\langle \phi_{s+t} : t\neq 0\rangle$. 
\item Every group has a filter $\phi$ into the characteristic subgroups of $G$ such that for every $s,t\neq 0$, 
the graded product $*:L_s(\phi)\times L_t(\phi)\bmto L_{s+t}(\phi)$ has
each of the rings $\LL{*}$, $\MM{*}$, $\RR{*}$, and $\CC{*}$ semisimple Artinian.
\item  If each $\phi_s$ is characteristic then there is a naturally induced homomorphism $\Aut(G)\to \Aut(L(\phi))$ whose kernel $K_{\phi}$ 
has a filter $\Delta\phi:M\to 2^G$ 
\begin{align*}
	\Delta\phi_s & = \{ f\in \Aut(G) : \forall t, [\phi_s, f]\leq \phi_{s+t}\}
\end{align*}
such that the $M$-graded Lie ring $L(\Delta\phi)$ is naturally represented in the graded derivation Lie ring $\Der L(\phi)$.
\end{enumerate}
\end{thm}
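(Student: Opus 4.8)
The plan is to take the three parts in order. Parts (i) and (iii) are exercises in the commutator calculus of groups --- Philip Hall's collection process and the Hall--Witt identity --- carried out over a general pre-ordered monoid $M$ in \cite{Wilson:alpha}; part (ii) is the substantive claim, and it is where I would concentrate effort.

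For (i) I would form the associated graded object of the filtration: set $\partial\phi_s=\langle \phi_{s+t}:t\neq 0\rangle$ and $L_s(\phi)=\phi_s/\partial\phi_s$. Since $\phi$ is a filter, $[\phi_s,\phi_t]\leq\phi_{s+t}$, so the group commutator descends to a map $L_s(\phi)\times L_t(\phi)\to L_{s+t}(\phi)$. One then checks that each $L_s(\phi)$ is abelian, hence a $\mathbb{Z}$-module, and that the induced bracket is biadditive, alternating, and satisfies the Jacobi identity; in each case the discrepancy is a product of higher commutators and so lies in a suitable $\partial\phi_{s+t}$, by the identities $[x,yz]=[x,z]\,[x,y]^{z}$ and Hall--Witt. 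The finite chain condition keeps the grading locally finite, so $L(\phi)=\bigoplus_{s\neq 0}L_s(\phi)$ is well defined; I would cite \cite{Wilson:alpha} for the bookkeeping.

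For (ii) the strategy is refinement by radicals. Starting from any characteristic filter, say the lower central filter, the finite chain condition forces each graded bimap $*:L_s(\phi)\times L_t(\phi)\bmto L_{s+t}(\phi)$ to have Artinian scalar rings $\LL{*}$, $\MM{*}$, $\RR{*}$ and Artinian centroid $\CC{*}$, by the universal description of Section~\ref{sec:rings}. I would then enlarge the filter by adjoining the subgroups obtained as preimages of the powers of the Jacobson radicals of these rings under the canonical representations of Theorem~\ref{thm:universal}: a Jacobson radical is a characteristic ideal and the representations are canonical, so the preimages are characteristic, and one verifies that the enlarged family is again a filter. Each such step strictly lengthens a subgroup chain that, by the chain condition, cannot lengthen forever, so after finitely many refinements the radicals vanish and the scalar rings and centroid are semisimple Artinian. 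The delicate point --- and the main obstacle --- is to arrange that a refinement at one degree does not reintroduce radical at another and that the process terminates; this is exactly the content of \cite{Wilson:alpha}\cite{Wilson:Lie}, which I would invoke.

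For (iii), assume each $\phi_s$ is characteristic. Then every $f\in\Aut(G)$ stabilizes each $\phi_s$ and each $\partial\phi_s$, inducing $R_f^{s}\in\Aut(L_s(\phi))$ compatibly with all brackets, so there is a homomorphism $\Aut(G)\to\Aut(L(\phi))$ whose kernel $K_\phi$ is the set of automorphisms acting as the identity on every $L_s(\phi)$. With $\Delta\phi_s$ as in the statement, the filter axioms $[\Delta\phi_s,\Delta\phi_t]\leq\Delta\phi_{s+t}$ and monotonicity follow by collecting commutators $[\phi_u,[f,g]]$, using that $f,g$ normalize the $\phi_s$. Finally, for $f\in\Delta\phi_s$ the assignment $x\,\partial\phi_t\mapsto [x,f]\,\partial\phi_{s+t}$ is a well-defined $\mathbb{Z}$-linear map $L_t(\phi)\to L_{s+t}(\phi)$, and $[xy,f]=[x,f]^{y}[y,f]$ together with Hall--Witt shows it is a degree-$s$ derivation of the graded bracket; hence $L(\Delta\phi)=\bigoplus_{s}\Delta\phi_s/\partial\Delta\phi_s$ maps into the graded derivation Lie ring $\Der L(\phi)$, injectively on each graded piece by the definition of $\partial$. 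Once (ii) has been applied, each graded derivation algebra becomes amenable to Theorem~\ref{thm:der-2} and the induced autotopism actions to Theorem~\ref{thm:auto}, but that is downstream of the present statement.
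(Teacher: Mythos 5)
Your proposal follows essentially the same route as the paper, which does not prove Theorem~\ref{thm:filter} in full but attributes (i) to \cite{Wilson:alpha}*{Theorem 3.1}, (iii) to \cite{Wilson:Lie}, and sketches (ii) as exactly the radical-refinement loop you describe (detect a nontrivial Jacobson radical in a graded piece, refine the characteristic filter, enlarge the monoid, and terminate by the chain condition), deferring the delicate termination and non-interference issues to \cite{Wilson:alpha}*{Section 4} just as you do. The only caveat is your parenthetical claim that $L(\Delta\phi)\to\Der L(\phi)$ is injective on graded pieces, which is stronger than the stated ``naturally represented in'' and does not follow immediately from the definition of $\partial\Delta\phi_s$; it is not needed for the statement.
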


Theorem~\ref{thm:filter}(i) is proved in \cite{Wilson:alpha}*{Theorems 3.1}.  For part (ii) we use the process described in \cite{Wilson:alpha}*{Section 4}.
We move through the existing filter in some order, e.g. lexicographically, and compute the rings given above.  If we discover a nontrivial
Jacobson radical, we can refine the existing filter to remove that radical. The monoid $M$ is increased to $M\oplus \mathbb{N}$ and the process
begins all over.  At the end every subgroup in the filter is characteristic and each section $L_s$ is a semisimple module or each of the
rings described above.  Furthermore, the subgroups in the filter can be arranged into a series, though nothing in the results requires this.
The rings are very efficient to compute, see for example \cite{BW:slope}*{Theorem 4.1}. 
Part (iii) is proved in \cite{Wilson:Lie} and it shows how the filter continues to influence the structure of automorphism groups even for those automorphisms
which are represented trivially on the Lie structure.  In \cite{Wilson:Lie} we also
describe how to arrange for ascending variations on filter in a manner similar to how the lower and upper central series are related.  This inserts 
the study of autotopism groups into further contexts.

\section*{Acknowledgments}

This project started in earnest while spending an invigorating semester in The Einstein Institute for Mathematics, The Hebrew University.  I am grateful for the generous support of the institute and of
Aner Shalev, A. Mann, and A. Lubotzky.  

\begin{bibdiv}
\begin{biblist}
\bib{AAB}{article}{
   author={Akkurt, Mustafa},
   author={Akkurt, Emira},
   author={Barker, George P.},
   title={Automorphisms of structural matrix algebras},
   journal={Oper. Matrices},
   volume={7},
   date={2013},
   number={2},
   pages={431--439},
   issn={1846-3886},
   review={\MR{3099195}},
%   doi={10.7153/oam-07-26},
}
\bib{Albert:autotopism}{article}{
   author={Albert, A. A.},
   title={Non-associative algebras. I. Fundamental concepts and isotopy},
   journal={Ann. of Math. (2)},
   volume={43},
   date={1942},
   pages={685--707},
%   issn={0003-486X},
   review={\MR{0007747 (4,186a)}},
}

\bib{BFFM}{article}{
   author={Bayer-Fluckiger, Eva},
   author={First, Uriya A.},
   author={Moldovan, Daniel A.},
   title={Hermitian categories, extension of scalars and systems of
   sesquilinear forms},
   journal={Pacific J. Math.},
   volume={270},
   date={2014},
   number={1},
   pages={1--26},
%   issn={0030-8730},
   review={\MR{3245846}},
%   doi={10.2140/pjm.2014.270.1},
}
\bib{Bourbaki:algebra}{book}{
   author={Bourbaki, Nicolas},
   title={Algebra I. Chapters 1--3},
   series={Elements of Mathematics (Berlin)},
   note={Translated from the French;
   Reprint of the 1989 English translation [ MR0979982 (90d:00002)]},
   publisher={Springer-Verlag, Berlin},
   date={1998},
   pages={xxiv+709},
   isbn={3-540-64243-9},
   review={\MR{1727844}},
}
\bib{BHRD}{article}{
   author={Bray, John N.},
   author={Holt, Derek F.},
   author={Roney-Dougal, Colva M.},
   title={Certain classical groups are not well-defined},
   journal={J. Group Theory},
   volume={12},
   date={2009},
   number={2},
   pages={171--180},
  % issn={1433-5883},
   review={\MR{2502211 (2010a:20099)}},
   %doi={10.1515/JGT.2008.069},
}
	
\bib{BW:isom}{article}{
   author={Brooksbank, Peter A.},
   author={Wilson, James B.},
   title={Computing isometry groups of Hermitian maps},
   journal={Trans. Amer. Math. Soc.},
   volume={364},
   date={2012},
   number={4},
   pages={1975--1996},
%   issn={0002-9947},
   review={\MR{2869196}},
 %  doi={10.1090/S0002-9947-2011-05388-2},
}
\bib{BW:slope}{article}{
   author={Brooksbank, Peter A.},
   author={Wilson, James B.},
   title={Intersecting two classical groups},
   journal={J. Algebra},
   volume={353},
   date={2012},
   pages={286--297},
%   issn={0021-8693},
   review={\MR{2872448}},
%   doi={10.1016/j.jalgebra.2011.12.004},
}
\bib{BW:autotopism}{article}{
   author={Brooksbank, Peter A.},
   author={Wilson, James B.},
   title={Groups acting on tensor products},
   journal={J. Pure Appl. Algebra},
   volume={218},
   date={2014},
   number={3},
   pages={405--416},
%   issn={0022-4049},
   review={\MR{3124207}},
%   doi={10.1016/j.jpaa.2013.06.011},
}
\bib{C}{article}{
   author={Coelho, S{\^o}nia P.},
   title={The automorphism group of a structural matrix algebra},
   journal={Linear Algebra Appl.},
   volume={195},
   date={1993},
   pages={35--58},
   issn={0024-3795},
   review={\MR{1253268 (94j:16062)}},
%   doi={10.1016/0024-3795(93)90255-M},
}
\bib{Eick:NQ}{article}{
	review={\MR{2864560 (2012m:16027)}},
	author={Eick, Bettina},
	title={Computing nilpotent quotients of associative algebras and algebras satisfying a polynomial identity},
	journal={Internat. J. Algebra Comput.},
	volume={21}, 
	year={2011},
	number={8},
	pages={1339–1355},
} 
\bib{First:forms}{article}{
	author={First, Uriya A.},
	title={General bilinear forms},
	journal={Israel J. of Mathematics},
	volume={205},
	pages={145--183},
	year={2014},
}
\bib{First:Morita}{article}{
	author={First, Uriya A.},
	title={Rings That Are Morita Equivalent to Their Opposites},
	journal={J. Algebra},
	volume={430},
	pages={26--61},
	year={2015},
}
\bib{HNVL:NQ}{article}{
	review={\MR{1075429 (92d:20054)}},
	author={Havas, George},
	author={Newman, M. F.},
	author={Vaughan-Lee, M. R.},
	title={A nilpotent quotient algorithm for graded Lie rings},
	series={Computational group theory, Part 1.},
	journal={J. Symbolic Comput.},
	volume={9},
	year={1990},
	number={5-6},
	pages={653–664},
} 

\bib{Higman:graded}{article}{
	author={Higman, Graham},
	title={Lie ring methods in the theory of finite nilpotent groups},
	journal={Proc. Intern. Congr. Math.},
	address={Edinburgh},
	year={1958},
	pages={307–312},
}
\bib{Higman:chic}{article}{
	author={Higman, Graham},
	title={Enumerating p-groups},
	note={Group theory seminar lectures, University of Chicago (1960/61)},
	pages={6–12},
}
\bib{Higman:enum}{article}{
   author={Higman, Graham},
   title={Enumerating $p$-groups. I. Inequalities},
   journal={Proc. London Math. Soc. (3)},
   volume={10},
   date={1960},
   pages={24--30},
%   issn={0024-6115},
   review={\MR{0113948 (22 \#4779)}},
}

\bib{Knuth}{article}{
   author={Knuth, Donald E.},
   title={Finite semifields and projective planes},
   journal={J. Algebra},
   volume={2},
   date={1965},
   pages={182--217},
   issn={0021-8693},
   review={\MR{0175942 (31 \#218)}},
}
\bib{Khukhro}{book}{
   author={Khukhro, Evgenii I.},
   title={Nilpotent groups and their automorphisms},
   series={de Gruyter Expositions in Mathematics},
   volume={8},
   publisher={Walter de Gruyter \& Co., Berlin},
   date={1993},
   pages={xiv+252},
   isbn={3-11-013672-4},
   review={\MR{1224233 (94g:20046)}},
%   doi={10.1515/9783110846218},
}
\bib{Kruse-Price:enum}{article}{
   author={Kruse, Robert L.},
   author={Price, David T.},
   title={Enumerating finite rings},
   journal={J. London Math. Soc. (2)},
   volume={2},
   date={1970},
   pages={149--159},
%   issn={0024-6107},
   review={\MR{0251079 (40 \#4310)}},
}
\bib{LL:gen-der}{article}{
   author={Leger, George F.},
   author={Luks, Eugene M.},
   title={Generalized derivations of Lie algebras},
   journal={J. Algebra},
   volume={228},
   date={2000},
   number={1},
   pages={165--203},
%   issn={0021-8693},
   review={\MR{1760961 (2001e:17029)}},
%   doi={10.1006/jabr.1999.8250},
}
\bib{LW}{article}{
	author={Lewis, Mark L.},
	author={Wilson, James B.}
	title={Isomorphism in expanding families of indistinguishable groups.},
	journal= {Groups Complex. Cryptol.}
	volume={4},
	year={2012},
	number={1},
	pages={73--110}, 
	review = {\MR{2921156}},
}
\bib{Mostovoy}{article}{
   author={Mostovoy, Jacob},
   title={The notion of lower central series for loops},
   conference={
      title={Non-associative algebra and its applications},
   },
   book={
      series={Lect. Notes Pure Appl. Math.},
      volume={246},
      publisher={Chapman \& Hall/CRC, Boca Raton, FL},
   },
   date={2006},
   pages={291--298},
   review={\MR{2203714 (2006k:20134)}},
%   doi={10.1201/9781420003451.ch23},
}
%\bib{Maglione:filter}{article}{
%	author={Maglione, Joshua},
%	title={Longer nilpotent series for classical unipotent subgroups},
%	note={arXiv:1410.8096},
%	year={2014},
%}

\bib{Myasnikov}{article}{
   author={Myasnikov, A. G.},
   title={Definable invariants of bilinear mappings},
   language={Russian},
   journal={Sibirsk. Mat. Zh.},
   volume={31},
   date={1990},
   number={1},
   pages={104--115, 220},
%   issn={0037-4474},
   translation={
      journal={Siberian Math. J.},
      volume={31},
      date={1990},
      number={1},
      pages={89--99},
      issn={0037-4466},
   },
   review={\MR{1046815 (91i:03074)}},
%   doi={10.1007/BF00971153},
}
\bib{Neretin:enum}{article}{
   author={Neretin, Yu. A.},
   title={An estimate for the number of parameters defining an
   $n$-dimensional algebra},
   language={Russian},
   journal={Izv. Akad. Nauk SSSR Ser. Mat.},
   volume={51},
   date={1987},
   number={2},
   pages={306--318, 447},
%   issn={0373-2436},
   translation={
      journal={Math. USSR-Izv.},
      volume={30},
      date={1988},
      number={2},
      pages={283--294},
      issn={0025-5726},
   },
   review={\MR{896999 (88i:17001)}},
}
\bib{Newman:NQ}{article}{
   author={Newman, M. F.},
   title={Determination of groups of prime-power order},
   conference={
      title={Group theory (Proc. Miniconf., Australian Nat. Univ., Canberra,
      1975)},
   },
   book={
      publisher={Springer, Berlin},
   },
   date={1977},
   pages={73--84. Lecture Notes in Math., Vol. 573},
   review={\MR{0453862 (56 \#12115)}},
}
\bib{OBrien}{article}{
	review={\MR{1075431 (91j:20050)}},
	author={O'Brien, E. A.},
	title={The p-group generation algorithm},
	seriestitle={Computational group theory, Part 1.},
	journal={J. Symbolic Comput.},
	volume={9},
	year={1990},
	number={5--6},
	pages={677--698},
}
\bib{Poonen:enum}{article}{
   author={Poonen, Bjorn},
   title={The moduli space of commutative algebras of finite rank},
   journal={J. Eur. Math. Soc. (JEMS)},
   volume={10},
   date={2008},
   number={3},
   pages={817--836},
%   issn={1435-9855},
   review={\MR{2421162 (2009d:14009)}},
%   doi={10.4171/JEMS/131},
}
\bib{Pyber:enum}{article}{
   author={Pyber, L.},
   title={Enumerating finite groups of given order},
   journal={Ann. of Math. (2)},
   volume={137},
   date={1993},
   number={1},
   pages={203--220},
%   issn={0003-486X},
   review={\MR{1200081 (93m:11097)}},
 %  doi={10.2307/2946623},
}
\bib{Rowen:I}{book}{
   author={Rowen, Louis Halle},
   title={Graduate algebra: commutative view},
   series={Graduate Studies in Mathematics},
   volume={73},
   publisher={American Mathematical Society, Providence, RI},
   date={2006},
   pages={xviii+438},
   isbn={0-8218-0570-3},
   review={\MR{2242311 (2007h:13002)}},
%   doi={10.1090/gsm/073},
}	
\bib{Rowen:II}{book}{
   author={Rowen, Louis Halle},
   title={Graduate algebra: noncommutative view},
   series={Graduate Studies in Mathematics},
   volume={91},
   publisher={American Mathematical Society, Providence, RI},
   date={2008},
   pages={xxvi+648},
%   isbn={978-0-8218-0570-1},
   review={\MR{2462400 (2009k:16001)}},
}
\bib{Schafer}{book}{
   author={Schafer, Richard D.},
   title={An introduction to nonassociative algebras},
   series={Pure and Applied Mathematics, Vol. 22},
   publisher={Academic Press, New York-London},
   date={1966},
   pages={x+166},
   review={\MR{0210757 (35 \#1643)}},
}
\bib{Sims:enum}{article}{
   author={Sims, Charles C.},
   title={Enumerating $p$-groups},
   journal={Proc. London Math. Soc. (3)},
   volume={15},
   date={1965},
   pages={151--166},
%   issn={0024-6115},
   review={\MR{0169921 (30 \#164)}},
}

\bib{Wilson:unique-cent}{article}{
   author={Wilson, James B.},
   title={Decomposing $p$-groups via Jordan algebras},
   journal={J. Algebra},
   volume={322},
   date={2009},
   number={8},
   pages={2642--2679},
%   issn={0021-8693},
   review={\MR{2559855 (2010i:20016)}},
 %  doi={10.1016/j.jalgebra.2009.07.029},
}
\bib{Wilson:RemakI}{article}{
   author={Wilson, James B.},
   title={Existence, algorithms, and asymptotics of direct product
   decompositions, I},
   journal={Groups Complex. Cryptol.},
   volume={4},
   date={2012},
   number={1},
   pages={33--72},
%   issn={1867-1144},
   review={\MR{2921155}},
%   doi={10.1515/gcc-2012-0007},
}
\bib{Wilson:division}{article}{
   author={Wilson, James B.},
   title={Division, adjoints, and dualities of bilinear maps},
   journal={Comm. Algebra},
   volume={41},
   date={2013},
   number={11},
   pages={3989--4008},
%   issn={0092-7872},
   review={\MR{3169502}},
%   doi={10.1080/00927872.2012.660668},
}

\bib{Wilson:alpha}{article}{
   author={Wilson, James B.},
   title={More characteristic subgroups, Lie rings, and isomorphism tests
   for $p$-groups},
   journal={J. Group Theory},
   volume={16},
   date={2013},
   number={6},
   pages={875--897},
%   issn={1433-5883},
   review={\MR{3198722}},
%   doi={10.1515/jgt-2013-0026},
}
\bib{Wilson:Lie}{article}{
	author={Wilson, James B.},
	title={New Lie products for groups and their automorphisms},
	note={arXiv:1501.04670},
}

\end{biblist}
\end{bibdiv}

\end{document}